\documentclass{article}
\usepackage[english]{babel}
\usepackage{amsmath,amssymb, amsthm}
\usepackage{graphicx}

\newcommand{\infixand}{\text{ and }}
\newcommand{\mathd}{\mathrm{d}}
\newcommand{\tmname}[1]{\textsc{#1}}
\newcommand{\tmop}[1]{\ensuremath{\operatorname{#1}}}
\newtheorem{theorem}{Theorem}
\newtheorem{lemma}{Lemma}
\newtheorem{corollary}{Corollary}[theorem]
\newtheorem{proposition}{Proposition}
\newtheorem{conjecture}{Conjecture}
\newtheorem{question}{Question}
\newtheorem{example}{Example}
\newtheorem{remark}{Remark}

\def\H{{\bf H}}
\def\S{{\bf S}}

\def\HH{{\mathcal H}}
\def\R{{\mathbb R}}
\def\C{{\mathbb C}}
\def\Z{{\mathbb Z}}
\def\T{{\mathbb T}}
\newcommand{\pv}{\operatorname{p.v.}}

\begin{document}

	\title{Invariant sets of the double Hilbert transform}
	\author{E. Abakumov \and K. Domelevo \and
		S. Petermichl\thanks{S.~Petermichl was partially supported by the Alexander von Humboldt Foundation.}
		\and A. Poltoratski\thanks{A.~Poltoratski was partially supported by NSF Grant DMS-1954085.}}
	\date{}

	\maketitle

	\begin{abstract}
		We present an example that shows that the double Hilbert transform
		$\H$ can have the characteristic function of an open set of finite measure in $\mathbb{R}
		\times \mathbb{R}$ as an approximate eigenvector for the eigenvalue $1$. The
		same holds for the dyadic model of $\H$, the double dyadic shift. We further construct
		non-diagonal sets whose indicators are fixed by $\H$ -- strips bent along
		the Boole curve $x=y-1/y$ and, more generally, along level curves of real
		Herglotz functions with purely singular measures -- and study the class of
		invariant sets which they generate. At positive density we construct an
		example built on three families of parallel lines, related to the carry
		cocycle $\{u\}+\{v\}-\{u+v\}$, and the fans -- alternate sectors of an odd number of concurrent lines -- which are the simplest members of the invariant cones, a class that we describe completely. We then study the classification of the invariant sets whose
		boundary lies on countably many lines. Its combinatorial half is settled
		completely: a locally finite arrangement of lines in which no point lies
		on exactly two lines is a family of parallel lines, a pencil of
		concurrent lines, or an affine image of the triangular lattice. The
		classification is further connected, via projective duality, with
		configurations of Sylvester--Gallai type, and, via a change of
		variables, with two-valued superpositions of traveling waves.
	\end{abstract}

	\section{Introduction}

	This note is devoted to the properties of the images of indicator functions of two-dimensional sets under the double Hilbert transform. It may be viewed as a continuation of the study of the action of the standard one-dimensional Hilbert transform on the indicator functions of subsets of the line, which has applications in complex analysis, spectral theory and functional analysis.

	Let $\mathcal{H}$ denote the Hilbert transform on the real line:
	$$\mathcal Hf(x)=\frac 1{\pi}\,\pv\int_\R \frac{f(t)}{x-t}\,dt,$$
	where $f\in L^2(\R)\cup L^1(\R)$. By $\mathcal Ff$, or $\hat f$, we denote the Fourier transform of $f$:
	$$\hat f(s)=\frac1{\sqrt{2\pi}}\int_\R e^{-ist}f(t)dt.$$
	For $f\in L^2(\R^n)\cup L^1(\R^n)$,
	$$\hat f(s)=\frac1{(2\pi)^{n/2}}\int_{\R^n} e^{-i\langle s,t\rangle}f(t)dm_n(t),$$
	where $m_n$ denotes the $n$-dimensional Lebesgue measure on $\R^n$.
	The Hilbert transform corresponds to the multiplier $m(\xi)=- i \tmop{sign} (\xi)$ on the Fourier side:
	$$\mathcal F(\HH f)(s)=m(s)\hat f(s).$$
	It is well-known that if $\chi_E$ is an indicator function of a measurable set $E\subset \R$
	of finite Lebesgue measure $|E|$, then the distribution of $\mathcal H\chi_E$ depends only on the measure of the set:
	$$|\{|\HH\chi_E|>\lambda\}|=\frac{2|E|}{\sinh (\pi\lambda)},$$
	see \cite{SW, La}. This deep fact is a reformulation of Boole's formula, which displays a similar property for the Hilbert transform, or the Cauchy integral, of a positive singular measure, see for instance \cite{Po} for further references. This property has many important implications including finite-rank perturbation theory, where the indicator function appears as the Krein spectral shift of a rank-one perturbation of a self-adjoint operator. For basic results and definitions in this area we refer the reader to \cite{K, A-D, M-P, P2, S}.

	Although the above property does not seem to have direct analogs in higher dimensions, the action of the Hilbert transform on indicator functions (indicators) of subsets of $\R^2$ has recently been studied in connection with problems in harmonic analysis and singular integrals. One of the aspects of this study is the topic of this note.

	For $f\in L^2(\R^2)\cup L^1(\R^2)$ we denote by $\HH_1$ and $\HH_2$ the Hilbert transform
	with respect to the first and second variable respectively:
	$$\HH_1f(x,y)=\frac 1{\pi}\,\pv\int_\R \frac{f(t,y)}{x-t}\,dt,$$
	$$\HH_2f(x,y)=\frac 1{\pi}\,\pv\int_\R \frac{f(x,t)}{y-t}\,dt.$$

	One of the main objects in this article is the double Hilbert transform
	$\H$:

	$$\H f=\mathcal{H}_2 \mathcal{H}_1 f=\mathcal{H}_1 \mathcal{H}_2 f.$$

	On the Fourier side $\H$ becomes an operator of multiplication
	\begin{equation}\mathcal F(\H f)=\phi\mathcal F(f)
		\label{eq1}\end{equation}
	by the function $$\phi(\xi)=1-2\chi_{Q_{1,3}}(\xi),\qquad \xi=(\xi_1,\xi_2),$$ where $Q_{1,3}$ is the union of the (open) first and third quadrants of the coordinate plane; thus $\phi=1$ on the open second and fourth quadrants, whose union we denote by $Q_{2,4}$, and $\phi=-1$ on $Q_{1,3}$. Since the multiplier $\phi$ takes only two essential values, it follows that $\pm 1$ are the only eigenvalues of
	$\H: L^2(\R^2)\to L^2(\R^2).$ Our results show that although indicators of bounded sets cannot be eigenvectors of $\H$ in $L^2$, they may appear as ``quasi-eigenvectors'' in a certain asymptotic sense. Indicators of unbounded sets appear as generalized eigenvectors outside of $L^2$.

	We start our discussion of the action of $\H$ on indicator functions with the following simple statement.

	\begin{lemma}\label{L1}
		Let $V\subset \R^2$ be a bounded set. Then $\chi_V$ is not an eigenvector of $\H$.
	\end{lemma}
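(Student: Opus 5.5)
We argue via the Fourier side. Assume, for contradiction, that $\chi_V$ with $V$ bounded and $|V|>0$ satisfies $\H\chi_V=\lambda\chi_V$; if $|V|=0$ then $\chi_V=0$ in $L^2$ and there is nothing to prove. Since $\H$ is unitary and an involution on $L^2(\R^2)$, we must have $\lambda=\pm1$. By (\ref{eq1}) this means $\phi\hat\chi_V=\lambda\hat\chi_V$ almost everywhere. If $\lambda=1$, then $\hat\chi_V=0$ a.e.\ on $Q_{1,3}$; if $\lambda=-1$, then $\hat\chi_V=0$ a.e.\ on $Q_{2,4}$.

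The key step is analyticity. Since $V$ is bounded, $\chi_V\in L^1$ has compact support, so by Paley--Wiener $\hat\chi_V$ extends to an entire function on $\C^2$; in particular it is real-analytic on $\R^2$. A real-analytic function vanishing on a set of positive measure (here an open quadrant) vanishes identically on the connected set $\R^2$. Hence $\hat\chi_V\equiv0$, so $\chi_V=0$ a.e.\ by Plancherel, contradicting $|V|>0$.

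The only point needing care is the reduction to $\lambda=\pm1$ and the vanishing on an open set. This follows because $\phi^2=1$, so $\lambda^2\hat\chi_V=\hat\chi_V$ with $\hat\chi_V\not\equiv0$ forces $\lambda^2=1$. Then $(\phi-\lambda)\hat\chi_V=0$, and $\phi-\lambda\neq0$ on a whole open quadrant pair, which gives the vanishing. No step is a real obstacle; the argument rests entirely on Paley--Wiener analyticity combined with the multiplier being $-\lambda$ on an open set.
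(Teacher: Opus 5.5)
Your proof is correct, but it is not the one the paper gives. It is exactly the uncertainty-principle argument the paper mentions just before the lemma and then deliberately replaces by ``an elementary proof without the use of the uncertainty principle.''

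The paper stays in physical space and argues slice by slice. For $V\subset[-C,C]^2$ and each height $d$ with $|l_d\cap V|>0$, the function $\mathcal H_1\chi_V(\cdot,d)$ has positive $L^2$ mass on $\{|x|>C\}$. Unitarity of $\mathcal H_1$ on horizontal lines then gives $\iint_{|x|<C}|\mathcal H_1\chi_V|^2<\|\chi_V\|_{L^2}^2$. Unitarity of $\mathcal H_2$ on vertical lines turns this into $\|\chi_V\,\H\chi_V\|_{L^2}<\|\chi_V\|_{L^2}$, which is incompatible with $\H\chi_V=\pm\chi_V$.

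What each route buys:
\begin{itemize}
\item Your route is shorter and more conceptual, and it proves more. No nonzero compactly supported $L^2$ function, indicator or not, can be an eigenvector, because its (entire) Fourier transform would have to vanish on a pair of open quadrants.
\item The paper's route needs nothing beyond one-dimensional unitarity. It exhibits the defect $\iint_{|x|>C}|\mathcal H_1\chi_V|^2$ explicitly, in the spirit of the quantitative question that Theorem~\ref{T1} answers negatively.
\item The paper's route also uses only that $V$ is bounded in the $x$-direction. This is how the paper gets the strengthening in the remark after the lemma, to sets with one bounded projection.
\end{itemize}

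Your argument can be adapted to that strengthening, but not verbatim. For $V\subset[-C,C]\times\R$ of finite measure, $\widehat{\chi_V}$ is no longer entire in both variables. You would instead use that it is continuous and entire in $\xi_1$ for each fixed $\xi_2$, and propagate the vanishing along horizontal lines of the frequency plane.
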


	\begin{remark}
		As follows from the proof in the next section, the statement can be strengthened: bounded sets can be replaced by sets whose
		projection on one of the coordinate axes is bounded.
	\end{remark}

The actions of singular operators on indicators have attracted independent interest in various settings \cite{Va, So, Ha}. 
	It would be natural to try to quantify the last statement and to look for a universal constant $c>0$ such that for any bounded set $V\subset \R^2$,
	$$\| \H \chi_V -
	\chi_V \|_{L^2}\geq c\|\chi_V \|_{L^2}.$$
	Such questions are connected with the study of iterated commutators \cite{Ho, Vo}: indicator functions of open sets -- typically sets related to those appearing in the supremum that defines the product BMO norm -- form a natural testing class for the norm of the commutator. The double Hilbert transform arises naturally in the iterated commutator itself. The existence of a universal constant $c$ in the last inequality would rule out ``quasi-eigenfunctions'' of $\H$ for the eigenvalues $\pm 1$, which are, among other things, a substantial additional obstacle to a lower estimate of the commutator norm. However, as shown by our next statement, such a constant
$c$ does not exist.

	\begin{theorem}\label{T1}
		There exist open bounded sets $V_{\varepsilon} \subset \mathbb{R}
		\times \mathbb{R}$, $\varepsilon>0$, such that
		\begin{equation}
			\frac{\| \H \chi_{V_{\varepsilon}} -
				\chi_{V_{\varepsilon}} \|_{L^2}}{\| \chi_{V_{\varepsilon}} \|_{L^2}}
			\longrightarrow 0 \ \text{ as } \ \varepsilon \rightarrow 0. \label{asymcts}
		\end{equation}
	\end{theorem}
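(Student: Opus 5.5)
The plan is to truncate an unbounded set whose indicator is an exact generalized eigenvector of $\H$ for the eigenvalue $1$. If $f(x,y)=g(x-y)$, then $\mathcal F f$ is supported on the anti-diagonal $\{\xi_2=-\xi_1\}$. Away from the origin this line lies in $Q_{2,4}$, where $\phi=1$, so formally $\H f=f$. Hence the diagonal strip $S=\{|x-y|<1\}$ is invariant. I would take
$$V_\varepsilon=\{(x,y):\ |x-y|<1,\ |x+y|<1/\varepsilon\},$$
which is an open bounded rotated rectangle with $\|\chi_{V_\varepsilon}\|_{L^2}^2\asymp 1/\varepsilon$. The goal is then to show that $\|\H\chi_{V_\varepsilon}-\chi_{V_\varepsilon}\|_{L^2}^2=O(\log(1/\varepsilon))$, which gives \eqref{asymcts}.

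By \eqref{eq1} and Plancherel,
$$\|\H f-f\|_2^2=\int|\phi-1|^2|\hat f|^2=4\int_{Q_{1,3}}|\hat f|^2 .$$
So it suffices to bound the Fourier mass of $f=\chi_{V_\varepsilon}$ in $Q_{1,3}$. I would pass to rotated coordinates $u=(x-y)/\sqrt2$, $v=(x+y)/\sqrt2$, with dual variables $\eta$ and $\zeta$. Rotation commutes with the Fourier transform, so $f=a(u)b(v)$ with $a=\chi_{\{|u|<1/\sqrt2\}}$ and $b=\chi_{\{|v|<L\}}$, where $L=1/(\sqrt2\,\varepsilon)$. Then $\hat f=\hat a(\eta)\hat b(\zeta)$, and the quadrants $Q_{1,3}$ become the double cone $\{|\zeta|>|\eta|\}$. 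Therefore
$$\|\H f-f\|_2^2=4\int_{\R}|\hat a(\eta)|^2\Big(\int_{|\zeta|>|\eta|}|\hat b(\zeta)|^2\,d\zeta\Big)d\eta .$$

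Next come the explicit sinc estimates. We have $|\hat b(\zeta)|^2\asymp \sin^2(L\zeta)/\zeta^2$. Integrating this over $|\zeta|>|\eta|$ gives at most $\min(\|b\|_2^2,\,C/|\eta|)\lesssim\min(L,1/|\eta|)$. We also have $|\hat a(\eta)|^2\lesssim\min(1,\eta^{-2})$. The outer integral is therefore bounded by
$$\int_{|\eta|<1/L}L\,d\eta+\int_{1/L<|\eta|<1}\frac{d\eta}{|\eta|}+\int_{|\eta|>1}\frac{d\eta}{|\eta|^3}\lesssim 1+\log L .$$
This yields
$$\frac{\|\H\chi_{V_\varepsilon}-\chi_{V_\varepsilon}\|_2^2}{\|\chi_{V_\varepsilon}\|_2^2}\lesssim\varepsilon\log(1/\varepsilon)\to0 .$$

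The main point needing care is the middle range $1/L<|\eta|<1$. There the sharp cutoff in $v$ only contributes a $1/|\eta|$ tail, so the error is genuinely of logarithmic size rather than bounded. One must make sure this logarithmic loss is still beaten by the linear growth of $|V_\varepsilon|$, which the computation above confirms. Everything else is routine Plancherel bookkeeping.
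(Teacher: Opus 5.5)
Your proposal is correct and follows essentially the paper's route: truncate the diagonal strip, use Plancherel to reduce $\|\H\chi_{V_\varepsilon}-\chi_{V_\varepsilon}\|^2$ to the $Q_{1,3}$-mass of a product of two sinc factors, and split into three frequency ranges, with the middle range producing the logarithm. The only difference is cosmetic. You cut the strip perpendicularly, giving a rotated rectangle in which $Q_{1,3}$ becomes the cone $\{|\zeta|>|\eta|\}$; after a dilation, under which the ratio is invariant, this is a strip of width $\sim\varepsilon$ and length $O(1)$. The paper instead cuts horizontally ($|x_2|<1$), obtaining a parallelogram and the region $\{p>t>0\}$ in shear coordinates, and both versions give the same rate $\sqrt{\varepsilon|\log\varepsilon|}$.
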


	Let us define the Fourier projections $\mathcal{P}^1_{\pm}$ via \eqref{eq1} with
	$\phi$ equal to the indicator function of the right, respectively left, half-plane $\{\pm\xi_1>0\}$, and $\mathcal{P}^2_{\pm}$ with $\phi$ equal to the indicator function of the upper, respectively lower, half-plane $\{\pm\xi_2>0\}$.

	An obvious modification of the sets allows one to replace $\H \chi_{V_{\varepsilon}} - \chi_{V_{\varepsilon}}$ by
	$\H \chi_{V_{\varepsilon}} + \chi_{V_{\varepsilon}}$.
	Since
	$\H -\mathcal{I}
	= - 2\mathcal{P}^1_+
	\mathcal{P}^2_+ - 2\mathcal{P}^1_- \mathcal{P}^2_-,$
	our example thus shows
	that $\| \mathcal{P}^1_+ \mathcal{P}^2_+ \chi_{V_{\varepsilon}} \|_{L^2}$ and
	$\| \mathcal{P}^1_- \mathcal{P}^2_- \chi_{V_{\varepsilon}} \|_{L^2}$ are much
	smaller than $\| \chi_{V_{\varepsilon}} \|_{L^2}$, and we immediately obtain the following corollary.

	\begin{corollary}
		There is no
		absolute constant $c>0$ such that for all open sets $V$ of finite measure $\|
		\mathcal{P}^1_+ \mathcal{P}^2_+ \chi_V \|_{L^2} \geqslant c \| \chi_V
		\|_{L^2}$.
	\end{corollary}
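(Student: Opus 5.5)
The plan is to deduce the Corollary directly from Theorem~\ref{T1} by a Fourier-side orthogonality argument, using the sets $V_\varepsilon$ built there. These sets are open and bounded, so they have finite measure and are admissible in the Corollary. It is then enough to show that
\[
\|\mathcal{P}^1_+\mathcal{P}^2_+\chi_{V_\varepsilon}\|_{L^2}\le \tfrac12\|\H\chi_{V_\varepsilon}-\chi_{V_\varepsilon}\|_{L^2}.
\]
Dividing by $\|\chi_{V_\varepsilon}\|_{L^2}$ and letting $\varepsilon\to0$, \eqref{asymcts} shows the ratio tends to $0$. This rules out any absolute constant $c>0$.

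First I verify the operator identity $\H-\mathcal I=-2\mathcal P^1_+\mathcal P^2_+-2\mathcal P^1_-\mathcal P^2_-$ on the Fourier side. By \eqref{eq1}, $\H-\mathcal I$ has multiplier $\phi-1=-2\chi_{Q_{1,3}}$. The composition $\mathcal P^1_+\mathcal P^2_+$ has multiplier $\chi_{\{\xi_1>0\}}\chi_{\{\xi_2>0\}}$, the indicator of the open first quadrant. Likewise $\mathcal P^1_-\mathcal P^2_-$ has the indicator of the open third quadrant as multiplier. The coordinate axes have measure zero, so the multipliers agree almost everywhere, and the identity holds on $L^2(\R^2)$ by Plancherel.

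Next I use orthogonality. The functions $\mathcal P^1_+\mathcal P^2_+ f$ and $\mathcal P^1_-\mathcal P^2_- f$ have Fourier transforms supported in the disjoint first and third quadrants, so they are orthogonal in $L^2$. Plancherel therefore gives
\[
\|\H f-f\|_{L^2}^2=4\|\mathcal P^1_+\mathcal P^2_+ f\|_{L^2}^2+4\|\mathcal P^1_-\mathcal P^2_- f\|_{L^2}^2 .
\]
Applying this with $f=\chi_{V_\varepsilon}\in L^2$ yields the claimed bound. The same bound also holds for $\mathcal P^1_-\mathcal P^2_-$, which matches the remark preceding the Corollary.

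There is no real obstacle here: all the difficulty is contained in Theorem~\ref{T1}, which I take as given. The only point needing care is the almost-everywhere identification of multipliers along the axes, and that is immediate.
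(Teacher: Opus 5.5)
Your proposal is correct and is essentially the paper's argument: the paper also deduces the corollary from Theorem \ref{T1} via the identity $\H-\mathcal{I}=-2\mathcal P^1_+\mathcal P^2_+-2\mathcal P^1_-\mathcal P^2_-$ and the orthogonality of the two terms, which is noted in the proof of Theorem \ref{T1} and gives $\|\mathcal P^1_+\mathcal P^2_+\chi_{V_\varepsilon}\|_{L^2}\le\frac12\|\H\chi_{V_\varepsilon}-\chi_{V_\varepsilon}\|_{L^2}$. You only make explicit the multiplier check and the Pythagorean step that the paper leaves implicit.
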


	Such inequalities are relevant to the estimate of the iterated commutator
	from below, and it has previously been shown by {\tmname{Volberg}} that such
	a constant cannot exist for real-valued $b$ replacing $\chi_V$. His example can
	be found in \cite{Vo}.
	These examples are in contrast to the one-parameter case,
	where the symmetric Fourier transform of a real-valued function readily
	implies such a comparison.

	One might hope that such a lower estimate is true when replacing arbitrary real-valued $b$ by indicator functions of measurable sets. As was mentioned before, in one parameter
	very precise control is known for the level sets of the Hilbert transform
	acting on indicators.
	For an indicator function $\chi_V$ of a planar set $V$ to be a generalized
	eigenfunction with eigenvalue $\pm 1$ for the double Hilbert transform
	$\H$ one therefore needs $\mathcal{H}_1
	\chi_V$ to have the right distribution not only on each horizontal line but also
	on each vertical line, so that $\mathcal{H}_2\mathcal{H}_1 \chi_V$ is again an
	indicator function.

	Interestingly, such a delicate property can be achieved
	via the simple example of a diagonal strip considered in this paper. In
	addition, we then use a diagonal strip of width $\varepsilon$ that is cut off
	in its $x_2$ variable by $\pm 1$, so as to produce a bounded set $V_{\varepsilon}$
	so that the property \eqref{asymcts} holds true.

	We also investigate such an estimate for a dyadic replacement of the
	Hilbert transform, the dyadic shift $\mathcal{S}_0$ acting on the Haar system
	via $h_{I_{\pm}} \mapsto \pm h_{I \mp}$.
	Similarly to our notations above, we denote by $\mathcal{S}_1 ,\mathcal{S}_2 $ the shifts with respect to the first and second variables and put
	$$ \S f=\mathcal{S}_2 \mathcal{S}_1 f =\mathcal{S}_1 \mathcal{S}_2 f.$$

	Again, we show

	\begin{theorem}\label{T2}
		There exist bounded sets $V_n$
		of finite measure, measurable with respect to the $\sigma$-algebra generated by
		$\mathcal{D}_n\times\mathcal{D}_n$, the dyadic squares of side $2^{- n}$, such that
		\[ \frac{\| \S \chi_{V_n} - \chi_{V_n} \|_{L^2}}{\|
			\chi_{V_n} \|_{L^2}} \longrightarrow 0 \ \text{ as } \ n \rightarrow \infty .\]
	\end{theorem}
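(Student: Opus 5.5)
The plan is to imitate the diagonal-strip construction in the dyadic setting and to read $\chi_{V_n}$ as the kernel of an operator. Fix $n$ and let $I_1,\dots,I_{2^n}$ be the dyadic intervals of length $2^{-n}$ in $[0,1)$. Set
$$V_n=\bigcup_{k=1}^{2^n} I_k\times I_k .$$
This set is $\mathcal D_n\times\mathcal D_n$-measurable and bounded, and $\|\chi_{V_n}\|_{L^2}^2=2^{-n}$. The key observation is that
$$\chi_{V_n}(x,y)=\sum_k\chi_{I_k}(x)\chi_{I_k}(y)=2^{-n}\,K_n(x,y),$$
where $K_n$ is the integral kernel of the operator $T_n=\mathbb E_n M$. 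Here $\mathbb E_n$ is the conditional expectation onto $\mathcal D_n$ and $M$ is multiplication by $\chi_{[0,1)}$. These two commute, since $[0,1)$ is $\mathcal D_n$-measurable.

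Under the identification of $L^2(\R^2)$ with Hilbert--Schmidt operators on $L^2(\R)$, the map $f\mapsto \S f=\mathcal S_1\mathcal S_2 f$ becomes $T\mapsto \mathcal S_0 T\mathcal S_0^{t}$. The $L^2$ norm of a kernel is the Hilbert--Schmidt norm of the operator, and $\mathcal S_0$ has real matrix in the Haar basis, so $\mathcal S_0^t=\mathcal S_0^*$. The shift $\mathcal S_0$ maps $h_{I_\pm}$ to $\pm h_{I_\mp}$, hence it preserves the scale of every Haar function. Consequently $\mathcal S_0$ commutes with $\mathbb E_n$, and it is unitary on the span of the Haar functions. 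We therefore get
$$\mathcal S_0 T_n \mathcal S_0^*-T_n=\mathbb E_n\bigl(\mathcal S_0 M\mathcal S_0^*-M\bigr)=\mathbb E_n[\mathcal S_0,M]\mathcal S_0^*.$$
Thus
$$\|\S\chi_{V_n}-\chi_{V_n}\|_{L^2}=2^{-n}\bigl\|\mathbb E_n[\mathcal S_0,M]\bigr\|_{HS}$$
up to the unitary factor. Since $2^{-n}\|T_n\|_{HS}=2^{-n/2}$, it suffices to show that $\|\mathbb E_n[\mathcal S_0,M]\|_{HS}=o(2^{n/2})$. Without the truncation $M$ the diagonal would be exactly invariant, and the whole error comes from the truncation at the boundary of $[0,1)$.

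The main step is to estimate this commutator, and I would compute it on the Haar basis. Let $h_K$ be a Haar function. If $K\subseteq[0,1)$, then $h_K$ and $\mathcal S_0h_K$ are both supported in $[0,1)$, so $[\mathcal S_0,M]h_K=0$. The same holds if $K\cap[0,1)=\emptyset$, since then both supports lie outside $[0,1)$. The only remaining $K$ are the dyadic ancestors $[0,2^j)$, $j\ge1$, because we placed $[0,1)$ inside $\R_+$. For these $M h_K=2^{-j/2}\chi_{[0,1)}$ and $M\mathcal S_0h_K=\pm2^{-j/2}\chi_{[0,1)}$. The commutator is therefore a sum of rank-one pieces with coefficients $O(2^{-j/2})$, and the $\chi_{[0,1)}$ parts must be interpreted inside the Haar span, via its expansion over ancestors, where the coefficients again decay geometrically. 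It follows that $[\mathcal S_0,M]$ is Hilbert--Schmidt with norm bounded independently of $n$. Since $\|\mathbb E_n\|\le1$, the error is $O(2^{-n})$ against $\|\chi_{V_n}\|=2^{-n/2}$, so the ratio is $O(2^{-n/2})\to0$.

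The obstacle I expect is bookkeeping rather than ideas. The shift is defined through Haar expansions, while indicators of intervals on $\R$ have no Haar expansion converging to them in $L^2(\R)$ in the naive sense, because they carry the "constant at infinity" component. I would handle this in one of two ways. The first is to work on $[0,1)^2$ with the Haar system together with $\chi_{[0,1)}$, letting $\S$ act on the non-Haar component in the natural way. The second is to verify directly that $\chi_{V_n}$ is expanded in $L^2(\R^2)$ by its product Haar coefficients $\langle\chi_{V_n},h_J\otimes h_K\rangle=2^{-n}\langle\mathbb E_nMh_K,h_J\rangle$. Then I would redo the computation above purely on the coefficient side, where each step is an identity between the square-summable arrays $(\langle\cdot,h_J\otimes h_K\rangle)_{J,K}$.
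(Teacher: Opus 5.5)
Your route is genuinely different from the paper's, and the reduction part is sound. The paper stays in $L^2(\R^2)$. The sibling identity gives $\chi_{V_n}=2^{-n}\chi_{I_0}\otimes\chi_{I_0}+\sum_{k=1}^n2^{-k}\sum_{I\in\mathcal D_{n-k}(I_0)}h^\infty_I\otimes h^\infty_I$. Complete generations of sibling pairs are $\S$-invariant, so the defect sits in the top product $2^{-n}h^\infty_{I_0}\otimes h^\infty_{I_0}$ (whose partner $[1,2)$ is missing) and in the average $2^{-n}\chi_{I_0}\otimes\chi_{I_0}$.

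You instead pass to Hilbert--Schmidt operators. The following steps are all correct:
\begin{itemize}
\item the identification $\S\leftrightarrow (T\mapsto\mathcal S_0T\mathcal S_0^*)$;
\item the commutation $\mathcal S_0\mathbb E_n=\mathbb E_n\mathcal S_0$, since $\mathbb E_n$ is the projection onto $\overline{\mathrm{span}}\{h_I:|I|\ge2^{1-n}\}$ and $\mathcal S_0$ preserves scales;
\item the formula $\|\S\chi_{V_n}-\chi_{V_n}\|_{L^2}=2^{-n}\|\mathbb E_n[\mathcal S_0,M]\|_{HS}$.
\end{itemize}
What this buys is conceptual. The whole defect lives in one fixed, $n$-independent operator, so the rate $2^{-n/2}$ just says $[\mathcal S_0,\chi_{[0,1)}]\in S_2$. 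It also explains the logarithm in Theorem \ref{T1}. The same reduction there gives the error operator $C_\varepsilon\HH[\HH,\chi_{(-1,1)}]$, with $C_\varepsilon$ convolution by $\chi^\varepsilon$. The commutator $[\HH,\chi_{(-1,1)}]$ is not Hilbert--Schmidt: its kernel $\frac{\chi(y)-\chi(x)}{\pi(x-y)}$ is logarithmically divergent in $L^2$ near $(\pm1,\pm1)$. The paper's expansion, in turn, is fully explicit and exposes the sibling (base-$2$ carry) mechanism it uses later.

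However, the one step that carries the content, the Haar computation of $[\mathcal S_0,M]$, is wrong as written. The correct dichotomy is by the \emph{parent} of $K$. The column $[\mathcal S_0,M]h_K=\mathcal S_0(\chi_{[0,1)}h_K)-\chi_{[0,1)}\mathcal S_0h_K$ vanishes when $K$ and its sibling are both inside or both outside $[0,1)$, that is, unless the parent strictly contains $[0,1)$. Your case analysis fails in three places:
\begin{itemize}
\item $K=[0,1)$: here $\mathcal S_0h_{[0,1)}=-h_{[1,2)}$ leaves $[0,1)$, so the column is nonzero.
\item $K=[2^i,2^{i+1})$, $i\ge0$: these are disjoint from $[0,1)$, but their sibling is $[0,2^i)$, so they contribute.
\item $K=[0,2^j)$: you take $M\mathcal S_0h_K=\pm2^{-j/2}\chi_{[0,1)}$, but in fact $M\mathcal S_0h_K=-Mh_{[2^j,2^{j+1})}=0$.
\end{itemize}
The correct nonzero columns are
\[
\begin{aligned}
&[\mathcal S_0,M]h_{[0,1)}=-h_{[1,2)}, &&[\mathcal S_0,M]h_{[1,2)}=-h_{[0,1)},\\
&[\mathcal S_0,M]h_{[0,2^j)}=-2^{-j/2}\,\mathcal S_0\chi_{[0,1)}, &&[\mathcal S_0,M]h_{[2^j,2^{j+1})}=2^{-j/2}\chi_{[0,1)}\quad(j\ge1),
\end{aligned}
\]
so $\|[\mathcal S_0,M]\|_{HS}^2=1+1+2\sum_{j\ge1}2^{-j}=4$. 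The columns you missed, $K=[0,1)$ and $[1,2)$, are not a tail: they carry half of this. For $n\ge1$ every column is $\mathcal D_n$-measurable, so $\|\mathbb E_n[\mathcal S_0,M]\|_{HS}=2$ and the ratio is exactly $2\cdot2^{-n/2}$, as in the paper. Indeed, after composing with $\mathcal S_0^*=-\mathcal S_0$ and scaling by $2^{-n}$:
\begin{itemize}
\item the first two columns give the paper's term $2^{-n}(h_{[1,2)}\otimes h_{[1,2)}-h_{[0,1)}\otimes h_{[0,1)})$;
\item the geometric tail gives $2^{-n}(\S-\mathcal I)(\chi_{I_0}\otimes\chi_{I_0})$.
\end{itemize}
So your conclusion stands, but only after this repair.

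Finally, the obstacle you anticipate does not exist. $\{h_I\}_{I\in\mathcal D}$ is an orthonormal basis of $L^2(\R)$, and $\chi_{[0,1)}=-\sum_{k\ge1}2^{-k/2}h_{[0,2^k)}$ converges in $L^2(\R)$: the $N$-th partial sum equals $\chi_{[0,1)}-2^{-N}\chi_{[0,2^N)}$. Hence all your identities hold verbatim in $L^2(\R)$ and $L^2(\R^2)$. Drop your first proposed workaround. Restricting to $[0,1)^2$ with $\chi_{[0,1)}$ adjoined changes the operator, because $\mathcal S_0h_{[0,1)}=-h_{[1,2)}$ leaves $[0,1)$. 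That is precisely where the missed $O(2^{-n})$ error term lives.
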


	Interestingly, although our examples are related, the width of our strip
	is $\varepsilon$ and that of our dyadic strip is $2^{- n}$, we get
	\[ \frac{\|  \H \chi_{V_{\varepsilon}} -
		\chi_{V_{\varepsilon}} \|_{L^2}}{\| \chi_{V_{\varepsilon}} \|_{L^2}}
	\lesssim \sqrt{\varepsilon | \log \varepsilon |} \]
	while we get
	\[ \frac{\| \S \chi_{V_n} - \chi_{V_n} \|_{L^2}}{\|
		\chi_{V_n} \|_{L^2}} \lesssim \sqrt{2^{- n}} , \]
        without the logarithmic term. 

	The remaining sections of the note study the exactly invariant sets. In Section \ref{sec:bent} we show that diagonal sets, i.e. unions of parallel lines of positive slope, are far from the only sets whose indicators are fixed by $\H$: any strip may be bent along the level curves of a real Herglotz function with purely singular measure. The simplest example is the hyperbolic strip
	$$S_\varepsilon=\left\{(x,y):\ \left|x-\left(y-\frac 1y\right)\right|<\varepsilon\right\},$$
	a union of translates of a two-branch hyperbola which contains no straight line and satisfies $\H\chi_{S_\varepsilon}=\chi_{S_\varepsilon}$, with all horizontal and vertical cross-sections of measure exactly $2\varepsilon$. Section \ref{sec:class} develops necessary conditions for invariance -- constancy of the cross-sectional measures and a Herglotz structure of the slice barycenters -- and gives a classification in two settings: sets with connected cross-sections, and sets whose boundary lies on countably many lines (the positive-density part of the second setting being partly conjectural; its combinatorial half is settled in Theorem \ref{Tarr}: a locally finite line arrangement without ordinary intersection points is a family of parallel lines, a pencil of concurrent lines, or an affine image of the triangular lattice). In the second setting a new phenomenon appears at positive density: an example built from three families of parallel lines via the carry cocycle $\{u\}+\{v\}-\{u+v\}$, for which $2\chi_S-1$ is fixed by $\H$, and the fans, alternate sectors of an odd number of concurrent lines, whose piecewise constant profiles put them in a different regime from the carry set. The fans are the simplest of the invariant cones, which we describe completely in Theorem \ref{Tcone}: the invariant cones with a given vertex are parametrized by arbitrary measurable sets of directions of positive slope. Section \ref{sec:SG} translates the line-boundary classification, via projective duality, into a configuration problem of Sylvester--Gallai type and identifies the recent example of B\'ar\'any, Du, Schwarz, Yuan and Zamfirescu \cite{BDSYZ} as precisely the projective dual of an affine copy of the triangular lattice, the arrangement behind our positive-density example; the dual form of Theorem \ref{Tarr} shows that, essentially, there is no other example of its kind with a locally finite dual arrangement. Finally, Section \ref{sec:waves} recasts the line-supported case as a problem about waves: a sum $\sum_{d=1}^nf_d(x-a_dt)$ of traveling profiles with distinct speeds can take only the values $0$ and $1$ at all times when $n=1$ or $n=3$, the triple case being an exact three-wave resonance, and, with step profiles, for every odd $n$, the fronts then passing through a common event; this formulation connects the classification with tomography, with periodic decompositions of integer-valued functions, and with the theory of Fourier quasicrystals.

	\section{Continuous case}

	Before giving a simple proof of Lemma \ref{L1}, let us point out that according to one of the basic statements of the uncertainty principle, a non-trivial function with a bounded support in $\R^n$
	cannot have its Fourier transform vanish on a nonempty open set. It follows from \eqref{eq1} that the Fourier transform of an eigenvector of $\H$ with eigenvalue $1$ is supported in $\overline{Q_{2,4}}$, and that of an eigenvector with eigenvalue $-1$ in $\overline{Q_{1,3}}$; in either case the Fourier transform vanishes on the union of two open quadrants. Hence, such functions cannot have bounded support, let alone be characteristic functions of bounded sets. Below we supply an elementary proof without the use of the uncertainty principle.

	\begin{proof}[Proof of Lemma \ref{L1}]
		Let $V\subset [-C,C]\times [-C,C]$ and let $l_d=\{(x,y)|y=d\}$ be a horizontal line.
		Notice that for every $d$ such that $|l_d\cap V|>0$ we have
		$$\int_{|x|>C}|\mathcal{H}_1\chi_V(x,d)|^2dx>0.$$
		From the unitarity of the Hilbert transform it follows that
		\begin{align*}
		\iint_{|x|<C}|\mathcal{H}_1\chi_V(x,y)|^2\,dxdy&=\iint_{\R^2}|\mathcal{H}_1\chi_V|^2-
		\iint_{|x|>C}|\mathcal{H}_1\chi_V|^2\\
		&=\|\chi_V\|^2_{L^2(\R^2)}-\iint_{|x|>C}|\mathcal{H}_1\chi_V|^2<\|\chi_V\|^2_{L^2(\R^2)}.
		\end{align*}
		Furthermore, by the unitarity of $\HH_2$ on each vertical line,
		\begin{align*}
		\|\chi_V \H\chi_V\|^2_{L^2(\R^2)}&\leq \|\chi_{\{|x|<C\}} \H\chi_V\|^2_{L^2(\R^2)}\\
		&=\iint_{|x|<C}|\mathcal{H}_1\chi_V(x,y)|^2\,dxdy<\|\chi_V\|^2_{L^2(\R^2)},
		\end{align*}
		which implies that $\H\chi_V\neq \pm \chi_V$.
	\end{proof}

	Consider the strip $V_{\varepsilon, \infty} = \{ (x_1, x_2) : | x_1 - x_2 | <
	\varepsilon \}$. Let us first show that (formally) the strip indicator
	\[ \chi_{V_{\varepsilon, \infty}} (x_1, x_2) = \chi^{\varepsilon} (x_1 - x_2)
	= \chi^{\varepsilon} (x_2 - x_1) = \chi^{\varepsilon}_{x_2} (x_1) =
	\chi_{x_1}^{\varepsilon} (x_2) \]
	is an eigenvector of $\H$. Here we use the notation
	$\chi^{\varepsilon} = \chi_{(- \varepsilon, \varepsilon)}$ as well as
	$f_{\tau}$ for the translation of the function $f$, so $f_{\tau} (x) = f (x -
	\tau)$. Recall that $\mathcal{H}$ is translation invariant and maps even
	functions to odd functions.
	\begin{align*}
		\H \chi_{V_{\varepsilon, \infty}} (x_1, x_2)
		& = \mathcal{H}_1\mathcal{H}_2\, \chi_{x_1}^{\varepsilon} (x_2) \\[.1em]
		& =\mathcal{H}_1 (\mathcal{H} \chi^{\varepsilon})_{x_1} (x_2) \\[.1em]
		& =\mathcal{H}_1 (\mathcal{H}\chi^{\varepsilon}) (x_2 - x_1) \\[.1em]
		&= -\mathcal{H}_1 (\mathcal{H}\chi^{\varepsilon}) (x_1 - x_2)\\[.1em]
		& =  -\mathcal{H}_1 (\mathcal{H} \chi^{\varepsilon})_{x_2} (x_1) \\[.1em]
		&= -(\mathcal{H}\mathcal{H} \chi^{\varepsilon})_{x_2} (x_1) \\[.1em]
		& = \chi^{\varepsilon}(x_1 - x_2)\\[.1em]
		& = \chi_{V_{\varepsilon,\infty}} (x_1, x_2) .
	\end{align*}
	To make this into a valid $L^2$ example, we will cut off the strip. Consider
	the set
	\[ V_{\varepsilon} = \{ (x_1, x_2) : | x_1 - x_2 | < \varepsilon, | x_2 | < 1
	\} = V_{\varepsilon, \infty} \cap \{ (x_1, x_2) : | x_2 | < 1 \} . \]
	We compute the Fourier transform of $\chi_{V_{\varepsilon}}$, writing $\chi$
	for $\chi^1$
	\begin{align*}
		\mathcal{F}_2 \mathcal{F}_1 \chi^{\varepsilon} (x_1 - x_2) \chi (x_2)
		& =
		\mathcal{F}_2 \varepsilon \hat{\chi} (\varepsilon \xi_1) e^{- i x_2 \xi_1}
		\chi (x_2) \\[.7em]
		&= \varepsilon \hat{\chi} (\varepsilon \xi_1) \hat{\chi} (\xi_1 +
		\xi_2) \\[.9em]
		&= \frac{2}{\pi} \varepsilon \frac{\sin (\varepsilon
			\xi_1)}{\varepsilon \xi_1} \frac{\sin (\xi_1 + \xi_2)}{\xi_1 + \xi_2} .
	\end{align*}
	Recall again that $\mathcal{H}_j= - i (\mathcal P^j_+ - \mathcal P^j_-)$, so
	\begin{align*}
		\H \chi_{V_{\varepsilon}}
		& =  (- i)^2 (\mathcal P^1_+ -\mathcal P^1_-) (\mathcal P^2_+ - \mathcal P^2_-) \chi_{V_{\varepsilon}}\\
		& = - \mathcal P^1_+ \mathcal P^2_+
		\chi_{V_{\varepsilon}} - \mathcal P^1_- \mathcal P^2_- \chi_{V_{\varepsilon}} + \mathcal P^1_+ \mathcal P^2_-
		\chi_{V_{\varepsilon}} + \mathcal P^1_- \mathcal P^2_+ \chi_{V_{\varepsilon}} \\[1em]
		\chi_{V_{\varepsilon}}
		& =  (\mathcal P^1_+ + \mathcal P^1_-) (\mathcal P^2_+ + \mathcal P^2_-)\chi_{V_{\varepsilon}}\\
		& = \mathcal P^1_+ \mathcal P^2_+ \chi_{V_{\varepsilon}} + \mathcal P^1_- \mathcal P^2_-
		\chi_{V_{\varepsilon}} + \mathcal P^1_+ \mathcal P^2_- \chi_{V_{\varepsilon}} + \mathcal P^1_- \mathcal P^2_+
		\chi_{V_{\varepsilon}} .
	\end{align*}
	Therefore $\H \chi_{V_{\varepsilon}} -
	\chi_{V_{\varepsilon}} = - 2 \mathcal P^1_+ \mathcal P^2_+ \chi_{V_{\varepsilon}} - 2 \mathcal P^1_-
	\mathcal P^2_- \chi_{V_{\varepsilon}}$, and the two terms are orthogonal. We calculate
	\begin{align*}
		\int^0_{- \infty} \int^0_{- \infty} \left| \widehat{\chi_{V_{\varepsilon}}}
		(\xi_1, \xi_2) \right|^2 \mathd \xi_1 \mathd \xi_2
		&= \int^{\infty}_0
		\int^{\infty}_0 \left| \widehat{\chi_{V_{\varepsilon}}} (\xi_1, \xi_2)
		\right|^2 \mathd \xi_1 \mathd \xi_2 \\
		& =  \frac{4}{\pi^2} \int^{\infty}_0
		\int^{\infty}_t \frac{\sin^2 (\varepsilon t)}{t^2} \frac{\sin^2 (p)}{p^2}
		\mathd p \mathd t.
	\end{align*}
	We estimate this integral from above. If $t \geqslant 1$ then we will use
	\[ \int^{\infty}_t \frac{\sin^2 (p)}{p^2} \mathd p \leqslant \int^{\infty}_t
	\frac{1}{p^2} \mathd p = \frac{1}{t} . \]
	If $0 < t < 1$ then
	\begin{align*}
		\int^{\infty}_t \frac{\sin^2 (p)}{p^2} \mathd p
		&= \int^1_t \frac{\sin^2(p)}{p^2} \mathd p + \int^{\infty}_1 \frac{\sin^2 (p)}{p^2} \mathd p \\
		&\leqslant\int^1_t \frac{\sin^2 (p)}{p^2} \mathd p + 1 \leqslant 2 .
	\end{align*}
	We split the integral into three regions:
	\begin{align*}
		&\int^1_0 \int^{\infty}_t \frac{\sin^2 (\varepsilon t)}{t^2} \frac{\sin^2
			(p)}{p^2} \mathd p \mathd t \leqslant 2 \int^1_0 \frac{\sin^2 (\varepsilon
			t)}{t^2} \mathd t
		= 2 \varepsilon^2 \int^1_0 \frac{\sin^2 (\varepsilon
			t)}{(\varepsilon t)^2} \mathd t \leqslant 2 \varepsilon^2 . \\
		& \int^{1 / \varepsilon}_1 \int^{\infty}_t \frac{\sin^2 (\varepsilon t)}{t^2}
		\frac{\sin^2 (p)}{p^2} \mathd p \mathd t
		\leqslant \varepsilon^2 \int^{1 /
			\varepsilon}_1 \frac{\sin^2 (\varepsilon t)}{(\varepsilon t)^2} \frac{1}{t}
		\mathd t \leqslant \varepsilon^2 \int^{1 / \varepsilon}_1 \frac{1}{t}
		\mathd t = \varepsilon^2 | \log \varepsilon | . \\
		& \int^{\infty}_{1 / \varepsilon} \int^{\infty}_t \frac{\sin^2 (\varepsilon
			t)}{t^2} \frac{\sin^2 (p)}{p^2} \mathd p \mathd t \leqslant
		\int^{\infty}_{1 / \varepsilon} \frac{\sin^2 (\varepsilon t)}{t^2}
		\frac{1}{t} \mathd t \leqslant \int^{\infty}_{1 / \varepsilon}
		\frac{1}{t^3} \mathd t
		= \frac{1}{2} \varepsilon^2 .
	\end{align*}
	Together we get that
	\[ \int \int | \H \chi_{V_{\varepsilon}} -
	\chi_{V_{\varepsilon}} |^2 \lesssim \varepsilon^2 | \log \varepsilon | \]
	for sufficiently small $\varepsilon$. At the same time
	\[ \int \int | \chi_{V_{\varepsilon}} (x_1, x_2) |^2 = | V_{\varepsilon} | =
	4\varepsilon , \]
	which proves Theorem \ref{T1}.

	\section{Dyadic case}

	There exists a similar example in the dyadic world, with a very different
	calculation. Recall the dyadic grid $\mathcal{D}$:
	\[ \mathcal{D}_n = \{ 2^{- n} ([0, 1) + k) : k \in \mathbb{Z} \} \infixand
	\mathcal{D}= \cup_{n \in \mathbb{Z}} \mathcal{D}_n . \]
	For $I \in \mathcal{D}$ define the $L^{\infty}$ normalized Haar function
	$h^{\infty}_I = \chi_{I_+} - \chi_{I_-}$, where $I_+$ is the right half and
	$I_-$ is the left half of $I$. If we normalize these in $L^2$, we obtain an
	orthonormal basis consisting of $h_I = | I |^{- 1 / 2} h^{\infty}_I$ for $I
	\in \mathcal{D}$. One choice of dyadic Hilbert transform is the operator
	densely defined by
	\[ \mathcal{S}_0 : h_{I_{\pm}} \mapsto \pm h_{I_{\mp}} . \]
	The index $0$ is a reminder that this operator has mean zero in the sense of
	\cite{Pe}.

	Let us consider the dyadic strip
	\[ V_{\infty}^{\tmop{dy}} = \bigcup_{I \in \mathcal{D}_0} I \times I \]
	and its indicator function
	\[ \chi_{V_{\infty}^{\tmop{dy}}} (x_1, x_2) = \sum_{I \in \mathcal{D}_0}
	\chi_I (x_1) \chi_I (x_2) . \]
	We show that, formally, $\chi_{V_{\infty}^{\tmop{dy}}}$ is an eigenvector for
	$\S$ to the eigenvalue 1. First, observe that if $I$
	and $I'$ are dyadic siblings and $\hat{I}$ their dyadic parent, then
	\[ \chi_I (x_1) \chi_I (x_2) + \chi_{I'} (x_1) \chi_{I'} (x_2) = \frac{1}{2}
	\chi_{\hat{I}} (x_1) \chi_{\hat{I}} (x_2) + \frac{1}{2}
	h^{\infty}_{\hat{I}} (x_1) h^{\infty}_{\hat{I}} (x_2) . \]
	Therefore,
	\begin{eqnarray*}
		\chi_{V_{\infty}^{\tmop{dy}}} (x_1, x_2) & = & \sum_{I \in \mathcal{D}_{-
				1}} [\chi_{I_-} (x_1) \chi_{I_-} (x_2) + \chi_{I_+} (x_1) \chi_{I_+}
		(x_2)]\\
		& = & \frac{1}{2} \sum_{I \in \mathcal{D}_{- 1}} \chi_I (x_1) \chi_I (x_2)
		+ \frac{1}{2} \sum_{I \in \mathcal{D}_{- 1}} h^{\infty}_I (x_1) h^{\infty}_I
		(x_2) .
	\end{eqnarray*}
	Iteration leads in the limit to
	\[ \chi_{V_{\infty}^{\tmop{dy}}} (x_1, x_2) = \sum_{k = 1}^{\infty}
	\frac{1}{2^k} \sum_{I \in \mathcal{D}_{- k}} h^{\infty}_I (x_1)
	h^{\infty}_I (x_2) . \]
	Now we observe that for every dyadic interval $I$ with sibling $I'$ there holds
	\[ \S\, h^{\infty}_I (x_1) h^{\infty}_I (x_2) =
	h^{\infty}_{I'} (x_1) h^{\infty}_{I'} (x_2), \]
	since the two signs produced by $\mathcal S_0$ in the two variables cancel. Thus the individual products are not fixed by $\S$; however, $\S$ swaps the contributions of the two siblings, and since every generation $\mathcal D_{-k}$ is a disjoint union of sibling pairs, each generation sum $\sum_{I\in\mathcal D_{-k}} h^\infty_I(x_1)h^\infty_I(x_2)$ is invariant. This implies $\S \chi_{V_{\infty}^{\tmop{dy}}} =
	\chi_{V_{\infty}^{\tmop{dy}}}$.

	Now we cut off the strip as follows. Put the diagonal chain of squares of
	side length $2^{- n}$ into the unit square $I_0 \times I_0$, $I_0=[0,1)$. Call this set
	$V^{\tmop{dy}}_n = \bigcup_{I \in \mathcal{D}_n (I_0)} I \times I$, where $\mathcal D_m(I_0)$ denotes the family of dyadic subintervals of $I_0$ of length $2^{-m}$. Notice that
	\[ \| \chi_{V^{\tmop{dy}}_n} \|^2_{L^2} = 2^n (2^{- n})^2 = 2^{- n} . \]

	We use the same computation as above to obtain
	\[ \chi_{V^{\tmop{dy}}_n} = \frac{1}{2^n} \chi_{I_0} (x_1) \chi_{I_0} (x_2) +
	\sum_{k = 1}^n \frac{1}{2^k} \sum_{I \in \mathcal{D}_{n - k}(I_0)} h^{\infty}_I
	(x_1) h^{\infty}_I (x_2) . \]
	For $1\leq k\leq n-1$ the generation $\mathcal D_{n-k}(I_0)$ is a disjoint union of sibling pairs contained in $I_0$, so the corresponding Haar sums are fixed by $\S$. The top term $k=n$ consists of the single product $2^{-n}h^\infty_{I_0}(x_1)h^\infty_{I_0}(x_2)$, whose sibling partner is absent from the sum: since $I_0=[0,2)_-$, we have $\mathcal S_0 h^\infty_{I_0}=-h^\infty_{[1,2)}$ and hence $\S \left(h^\infty_{I_0}\otimes h^\infty_{I_0}\right)=h^\infty_{[1,2)}\otimes h^\infty_{[1,2)}$.
	Thus
	\[ \S (\chi_{V^{\tmop{dy}}_n}) -
	\chi_{V^{\tmop{dy}}_n} = \frac{1}{2^n}\left[ \S
	(\chi_{I_0 \times I_0}) - \chi_{I_0 \times I_0}\right] + \frac{1}{2^n}\left[h^\infty_{[1,2)}\otimes h^\infty_{[1,2)} - h^\infty_{I_0}\otimes h^\infty_{I_0}\right] . \]
	Expanding $\chi_{I_0} = -\sum_{k \geq 1} 2^{-k/2}\, h_{[0,2^k)}$ and using $\mathcal S_0 h_{[0,2^k)} = -h_{[2^k,2^{k+1})}$, we see that $\S(\chi_{I_0\times I_0})$ is supported in $[2,\infty)\times[2,\infty)$. The four terms on the right therefore split into three groups with pairwise disjoint supports, contained in $[2,\infty)^2$, $[1,2)^2$ and $[0,1)^2$ respectively, and on the unit square the two functions $\chi_{I_0\times I_0}$ and $h^\infty_{I_0}\otimes h^\infty_{I_0}$ are orthogonal. Using that
	$\S$ is an isometry, we obtain
	\[ \| \S (\chi_{V^{\tmop{dy}}_n}) -
	\chi_{V^{\tmop{dy}}_n} \|^2_{L^2} = \frac{1+1+2}{2^{2 n}}=\frac{4}{2^{2n}} , \]
	so that the ratio in Theorem \ref{T2} equals $2\sqrt{2^{-n}}$. This proves Theorem \ref{T2}.

	\section{Bent strips: non-diagonal invariant sets}\label{sec:bent}

	We call a measurable set $D\subset\R^2$ a \emph{diagonal set} if it is a union of parallel lines of positive slope, that is, $D=\{(x,y):\ \alpha y-\beta x\in E\}$ for some $\alpha,\beta>0$ and a measurable $E\subset\R$. The computation of Section 2 shows, more generally, that $\H\chi_D=\chi_D$, in the natural slice-wise sense, for every diagonal set whose cross-sections have finite measure. In this section we show that diagonal sets are far from the only invariant sets: the lines may be bent along the level curves of Herglotz functions with purely singular measures. In this section, invariance $\H\chi_S=\chi_S$ is understood slice-wise: for a.e.\ horizontal line the transform $\HH_1$ is applied to the ($L^1\cap L^2$) slice of $\chi_S$, for a.e.\ vertical line the transform $\HH_2$ is applied to the resulting function, and the output coincides with $\chi_S$ a.e.; in all examples of this section every step is a classical one-dimensional transform. (Sets with cross-sections of infinite measure require a different interpretation, discussed in Section \ref{sec:class}.)

	We write $\C_+=\{\Im w>0\}$, $\C_-=\{\Im w<0\}$. Recall that a (real) Herglotz function is an analytic map $\phi:\C_+\to\overline{\C_+}$; it admits the representation
	\begin{equation}\label{herglotz}
		\phi(w)=\beta w+\gamma+\int_\R\left(\frac 1{s-w}-\frac s{1+s^2}\right)d\mu(s),
		\qquad \beta\geq 0,\ \gamma\in\R,
	\end{equation}
	with a positive measure $\mu$ satisfying $\int (1+s^2)^{-1}d\mu<\infty$. The same formula defines $\phi$ on $\C_-$, with $\phi(\bar w)=\overline{\phi(w)}$, and maps $\C_-$ into $\C_-$; if $\mu$ is purely singular then the boundary values of $\phi$ on $\R$ are real a.e. We call such $\phi$ (with $\beta>0$) functions of the \emph{singular Herglotz class}. The model example, with $\beta=1$ and $\mu=\delta_0$, is Boole's function
	$$\phi(y)=y-\frac 1y.$$

	\begin{lemma}[Boole; commutation]\label{Lboole}
		Let $\phi(y)=y-1/y$. Then:

		(i) every $s\in\R$ has exactly two preimages $y_1\in(0,\infty)$, $y_2\in(-\infty,0)$ under $\phi$, they satisfy $y_1y_2=-1$ and
		$\frac 1{\phi'(y_1)}+\frac 1{\phi'(y_2)}=1$. Consequently $|\phi^{-1}(A)|=|A|$ for every Borel $A\subset\R$ and $V_\phi f=f\circ\phi$ is an isometry of $L^2(\R)$ \cite{Bo}.

		(ii) $\HH(f\circ\phi)=(\HH f)\circ\phi$ for every $f\in L^2(\R)$.
	\end{lemma}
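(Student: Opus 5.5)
For (i), fix $s\in\R$ and solve $\phi(y)=s$. This is the quadratic $y^2-sy-1=0$. Its discriminant is $s^2+4>0$, so it has two real roots with product $-1$, hence one positive and one negative. Since $\phi'(y)=1+1/y^2>0$ on each half-line, and $\phi$ maps each of $(0,\infty)$ and $(-\infty,0)$ bijectively onto $\R$, there are no further preimages.

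For the derivative identity, write
$$\frac1{\phi'(y)}=\frac{y^2}{y^2+1}.$$
Using $y_2=-1/y_1$ we get
$$\frac{y_2^2}{y_2^2+1}=\frac{1}{1+y_1^2},$$
and this sums with $\frac{y_1^2}{y_1^2+1}$ to $1$.

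The measure statement then follows from the change of variables on each monotone branch:
$$|\phi^{-1}(A)|=\int_A\Big(\frac1{\phi'(y_1(s))}+\frac1{\phi'(y_2(s))}\Big)ds=|A|.$$
Consequently $\int |f\circ\phi|^2=\int|f|^2$: apply this first to indicators, then to simple functions, then by density to all of $L^2$. So $V_\phi$ is an isometry.

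For (ii), I would work with Cauchy integrals and analytic continuation rather than with the principal value directly. By the isometry from (i) and the $L^2$-boundedness of $\HH$, both sides of (ii) are bounded linear operators in $f$. It therefore suffices to check (ii) on a dense class whose span is dense in $L^2$, such as the Cauchy kernels $f_a(t)=\Im\frac{1}{t-a}$ and their harmonic conjugates, for $a\in\C_+$. Equivalently, one checks it for $f=\Re g$ with $g=1/(t-a)$, a boundary value of a function analytic in $\C_-$, or symmetric variants.

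The key mechanism is that $\HH$ acts on real parts of boundary values of functions analytic and decaying in a half-plane by passing to the imaginary part, with a fixed sign convention. If $G$ is analytic in $\C_+$ and vanishes at infinity, then $G\circ\phi$ is again analytic in $\C_+$, because $\phi$ maps $\C_+$ into $\C_+$. It also still vanishes at infinity in the sense relevant for $H^2$. This is because $\phi(w)\to\infty$ as $w\to\infty$, and near the pole $w=0$ we have $\phi(w)\to\infty$ as well, so $G\circ\phi$ is bounded there. Hence $\Re(G\circ\phi)$ and $\Im(G\circ\phi)$ are again a conjugate pair, which gives $\HH(\Re G\circ\phi)=(\HH\Re G)\circ\phi$.

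The main obstacle is verifying that $G\circ\phi$ actually lies in $H^2(\C_+)$, so that the conjugate-pair relation holds with no additive constant. This requires a removable singularity at $0$, and the value of $G\circ\phi$ at $w=0$ is $G(\infty)=0$, which matches the behavior at infinity. Its boundary values are in $L^2$ by (i). The $H^2$ norms on horizontal lines $\Im w=\eta$ are bounded uniformly; I would obtain this via the Poisson-integral (Herglotz) representation of $|G|^2$ subharmonic majorants, composed with $\phi$. Alternatively, one can verify the identity directly for $G(w)=1/(w-a)$: partial fractions give
$$\frac1{\phi(w)-a}=\frac{c_1}{w-w_1}+\frac{c_2}{w-w_2},$$
where $w_1,w_2$ are the preimages of $a$, both lying in $\C_+$, and $c_1+c_2=1$ by the identity in (i), extended analytically. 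This is explicitly in $H^2$ of the lower half-plane. I would take this explicit route, and conclude by density and boundedness.
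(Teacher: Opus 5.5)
Your proposal is correct and, in the explicit route you finally commit to, it is essentially the paper's proof. For (i) you use the quadratic $y^2-sy-1=0$, the relation $y_2=-1/y_1$, and change of variables on the two monotone branches. For (ii) you check the identity on resolvents $(\cdot-a)^{-1}$, $a\in\C_+$: partial fractions write $(\phi-a)^{-1}$ as a combination of $(\cdot-w_j)^{-1}$ with $w_j\in\C_+$, so both $f$ and $f\circ\phi$ lie in $H^2(\C_-)$, where $\HH$ is multiplication by a single fixed constant, and the case $a\in\C_-$ follows by conjugation.

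The preliminary general $H^2$-composition argument is not needed, and neither, strictly, is $c_1+c_2=1$. All that is used is that $(\phi-a)^{-1}=w/(w^2-aw-1)$ is analytic on $\overline{\C_-}$ and decays like $O(1/|w|)$. This also covers $a=2i$, where the roots coincide at $w=i$ and your two-term partial fraction degenerates.
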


	\begin{proof}
		(i) The equation $y-1/y=s$ is $y^2-sy-1=0$, with roots of product $-1$. Since $\phi'(y)=1+y^{-2}=(1+y^2)/y^2$, we get $1/\phi'(y)=y^2/(1+y^2)$ and, with $y_2=-1/y_1$,
		$$\frac{y_1^2}{1+y_1^2}+\frac{1}{1+y_1^2}=1.$$
		The change of variables formula then gives $|\phi^{-1}(A)|=|A|$.

		(ii) It suffices to verify the identity on the resolvents $f=(\cdot-z)^{-1}$, $\Im z\neq 0$, whose linear span is dense in $L^2$, since both sides are bounded operators by (i). Let $\Im z>0$ and let $t_1,t_2$ be the roots of $\phi(t)=z$. From $t_1t_2=-1$ we get $\Im t_2=\Im t_1/|t_1|^2$, so the roots lie in the same open half-plane; since $\phi(\C_-)\subset\C_-$, they lie in $\C_+$. Partial fractions give
		$$\frac 1{\phi(t)-z}=\frac{A}{t-t_1}+\frac{B}{t-t_2},\qquad A+B=1.$$
		For $\Im t_j>0$ the function $y\mapsto (y-t_j)^{-1}$ is analytic in $\C_-$, so its spectrum lies in $(-\infty,0]$ and $\HH$ acts on it as multiplication by $i$. Hence both $\HH(f\circ\phi)$ and $(\HH f)\circ\phi$ are equal to $i(\phi(y)-z)^{-1}$. The case $\Im z<0$ follows by conjugation.
	\end{proof}

	\begin{remark}
		The same proof applies verbatim to any $\phi$ of the singular Herglotz class \eqref{herglotz} with finitely many atoms, and by approximation to the whole class: all solutions of $\phi(t)=z$, $\Im z>0$, lie in $\C_+$, the partial fraction coefficients sum to $1/\beta$, and $\sqrt\beta\, V_\phi$ is an isometry of $L^2(\R)$ commuting with $\HH$. The identity $|\phi^{-1}(A)|=|A|/\beta$ for the whole class is a classical extension of Boole's lemma; see \cite{Bo, Le} and, in the language of rank-one perturbations, the spectral averaging formula in \cite{S}.
	\end{remark}

	\begin{theorem}[bent strips]\label{Tbent}
		Let $E\subset\R$ be measurable with $0<|E|<\infty$, let $\phi$ belong to the singular Herglotz class with linear coefficient $\beta>0$, and put
		$$S=\{(x,y):\ x-\phi(y)\in E\}.$$
		Then every horizontal cross-section of $S$ has measure $|E|$, every vertical cross-section has measure $|E|/\beta$, and
		$$\H\chi_S=\chi_S.$$
	\end{theorem}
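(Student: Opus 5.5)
The plan mirrors the formal computation for the diagonal strip in Section 2, with translation in $y$ replaced by composition with $\phi$, and uses Lemma \ref{Lboole}(ii) (in the form of the Remark, valid for the whole singular Herglotz class) to move $\HH_2$ through $\phi$. Write $g=\chi_E$, so $\chi_S(x,y)=g(x-\phi(y))$.

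\emph{Cross-sections.} For fixed $y$ with $\phi(y)$ real (a.e. $y$), the horizontal section is $E+\phi(y)$, of measure $|E|$. For fixed $x$, the vertical section is $\phi^{-1}(x-E)$, of measure $|x-E|/\beta=|E|/\beta$ by the identity $|\phi^{-1}(A)|=|A|/\beta$ from the Remark.

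\emph{First transform.} On each horizontal line, translation invariance of $\HH$ gives
$$\HH_1\chi_S(x,y)=(\HH g)(x-\phi(y))=:u(x,y).$$
This is a classical one-dimensional transform, since $g\in L^1\cap L^2$.

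\emph{Second transform.} Fix $x$. As a function of $y$, $u(x,\cdot)=h_x\circ\phi$ with $h_x(s)=(\HH g)(x-s)$, and $h_x\in L^2(\R)$. By the Remark, $\HH(h_x\circ\phi)=(\HH h_x)\circ\phi$. Reflection and translation give $h_x(s)=(\HH g)^\vee(s-x)$, where $f^\vee(s)=f(-s)$. Since $\HH$ anticommutes with reflection and commutes with translation,
$$\HH h_x(s)=-(\HH\HH g)(x-s)=g(x-s).$$
Hence $\HH_2u(x,y)=g(x-\phi(y))=\chi_S(x,y)$, which is exactly the slice-wise invariance.

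\emph{Main obstacle.} The delicate point is extending Lemma \ref{Lboole}(ii) from finitely many atoms to a general singular $\mu$. The Remark asserts this extension, and I take it as given. If it had to be justified, the first approach would be to approximate $\phi$ by Herglotz functions $\phi_n$ with finitely many atoms, converging locally uniformly in $\C_+$. One then checks the identity on the resolvents $(\cdot-z)^{-1}$: both sides equal $i(\phi_n-z)^{-1}$, which converges in $L^2$ to $i(\phi-z)^{-1}$. The remaining step is to use the uniform isometry bound on $\sqrt\beta V_{\phi_n}$ together with weak convergence of $V_{\phi_n}f$ to pass to the limit.

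A smaller check is that $V_\phi$ maps $L^2$ to $L^2$, so that $h_x\circ\phi\in L^2$; this follows from the measure identity. Measurability in $x$ of each step is routine, since every function involved is an explicit composition.
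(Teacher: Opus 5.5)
Your proposal is correct and is essentially the paper's proof. It has the same slice computations via $|\phi^{-1}(A)|=|A|/\beta$, the same translation step for $\HH_1$, and the same use of the commutation $\HH(h\circ\phi)=(\HH h)\circ\phi$ (Lemma \ref{Lboole}(ii) together with the Remark extending it to the whole class), followed by anticommutation with the reflection and $\HH^2=-\mathcal I$. As an aside, the approximation you sketch for that extension can be bypassed: for $\Im z>0$ the function $(\phi-z)^{-1}$ is itself analytic and bounded by $1/\Im z$ in $\C_-$ (since $\Im\phi(w)\leq\beta\Im w<0$ there), and its boundary values lie in $L^2$ by Boole's identity, so $\HH$ acts on it as multiplication by $i$ and the resolvent check works for $\phi$ directly, without partial fractions.
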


	\begin{proof}
		The horizontal slice at height $y$ is $\phi(y)+E$; the vertical slice at $x$ is $\phi^{-1}(x-E)$, of measure $|E|/\beta$ by the remark above. For the invariance, fix $y$; by translation invariance
		$$\HH_1\chi_S(x,y)=(\HH\chi_E)(x-\phi(y)).$$
		Fix $x$ and let $g=\HH\chi_E\in L^2$. The function $y\mapsto g(x-\phi(y))$ lies in $L^2(dy)$, since $\int|g(x-\phi(y))|^2dy=\beta^{-1}\int|g(x-s)|^2ds<\infty$. Writing $h(s)=g(x-s)$ we have $g(x-\phi(\cdot))=h\circ\phi$, so by Lemma \ref{Lboole}(ii) and the anticommutation of $\HH$ with the reflection $s\mapsto x-s$,
		\begin{align*}
		\HH_2\left[g(x-\phi(\cdot))\right](y)&=(\HH h)(\phi(y))=-(\HH g)(x-\phi(y))\\
		&=-(\HH^2\chi_E)(x-\phi(y))=\chi_E(x-\phi(y)),
		\end{align*}
		using $\HH^2=-\mathcal I$.
	\end{proof}

	\begin{remark}
		On the Fourier side the mechanism is the following. The partial Fourier transform of $\chi_S$ in $x$ equals $F(t,y)=e^{-it\phi(y)}\widehat{\chi_E}(t)$ (up to a normalization constant), and for $t>0$
		$$e^{-it\phi(y)}=e^{-ity}\cdot e^{it/y}\qquad(\phi(y)=y-1/y)$$
		is the boundary value of an \emph{inner function} in $\C_-$: the exponential factor times the classical singular inner function with a point mass at the origin. Thus the spectrum of $\chi_S$ lies in $\{\xi_1>0,\ \xi_2\leq -\beta\xi_1\}$ together with its reflection, inside $\overline{Q_{2,4}}$, where the symbol of $\H$ equals $1$. Bending a strip is multiplication of the vertical wave by a singular inner factor.
	\end{remark}

	\begin{proposition}\label{Pnoline}
		Let $E$ be a bounded interval and $\phi(y)=y-1/y$. Then $S=\{x-\phi(y)\in E\}$ is an open set which contains no straight line; moreover, $S$ does not coincide up to a null set with any diagonal set.
	\end{proposition}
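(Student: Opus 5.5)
Let $E=(a,b)$. Then $S=\{(x,y): y\neq 0,\ a<x-y+1/y<b\}$, the preimage of an open interval under the continuous map $(x,y)\mapsto x-y+1/y$ on the open set $\{y\neq0\}$, so $S$ is open. Every horizontal slice $\{y=c\}$, $c\neq0$, is the bounded interval $(\phi(c)+a,\phi(c)+b)$, and the line $y=0$ is disjoint from $S$.

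To show that $S$ contains no straight line, we split by direction. A horizontal line meets $S$ in a bounded set or not at all. A line $y=mx+c$ with $m\neq 0$ is a graph over all $y$, so it crosses $y=0$, which is not in $S$. A vertical line $x=x_0$ would require $|x_0-\phi(y)-\tfrac{a+b}2|<\tfrac{b-a}2$ for all $y\neq0$. This is impossible, since $\phi(y)\to\pm\infty$ as $y\to\pm\infty$, so the condition fails for $|y|$ large. Hence $S$ contains no line.

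The main step is to show that $S$ does not agree a.e.\ with a diagonal set $D=\{\alpha y-\beta x\in F\}$, $\alpha,\beta>0$. We argue by contradiction. Every horizontal slice of $D$ is the set $(\alpha y-F)/\beta$, and these slices are translates of one another with shift proportional to $y$: the slice at $y+h$ is the slice at $y$ translated by $\alpha h/\beta$. For a.e.\ $y$ the slice of $S$ agrees a.e.\ with that of $D$ (Fubini). Since the slices of $S$ are intervals of length $b-a>0$, $F$ must be an interval up to null sets, and comparing left endpoints gives $\phi(y)+a=(\alpha y-\sup F)/\beta$ for a.e.\ $y$. Thus $\phi(y)=y-1/y$ would coincide a.e.\ with an affine function of $y$. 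This is false because $1/y$ is not affine on any set of positive measure (both sides are continuous away from $0$, so equality would hold on $(0,\infty)$). This is the only point needing care: passing from ``a.e.\ equal sets'' to ``a.e.\ slices equal'', and then to equality of the interval endpoints, which is immediate for intervals since two intervals equal up to a null set have the same endpoints.
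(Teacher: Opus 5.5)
Your proof is correct and follows the paper's argument. The paper also rules out lines by showing that $x-\phi(y)$ leaves the bounded window $E$ along each line; your remark that every line $y=mx+c$ with $m\neq0$ meets the excluded axis $y=0$ is the paper's pole case $x\to-c$, applied to all such slopes at once. The paper handles diagonal sets by the same a.e.\ slice comparison, which forces $\phi$ to be affine. Two cosmetic points: openness of $S$ requires $E$ open, as you tacitly assumed by writing $E=(a,b)$, and $\sup F$ should be the essential supremum, since $F$ is an interval only up to a null set.
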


	\begin{proof}
		Along a nonvertical line $y=mx+c$ the quantity $x-\phi(y)=x-y+1/y$ equals $(1-m)x-c+ (mx+c)^{-1}$, which is unbounded when $m\neq 1$ and equals $-c+(x+c)^{-1}$, unbounded as $x\to -c$, when $m=1$; along a vertical line it is $c-y+1/y$, unbounded as $y\to\pm\infty$. In each case the line leaves the bounded window $E$. If $S$ coincided a.e.\ with a diagonal set $D$, then for a.e.\ $y$ the interval $\phi(y)+E$ would coincide up to a null set with an interval whose endpoint is an affine function of $y$, forcing $\phi$ to be affine, a contradiction.
	\end{proof}

	Figure \ref{fig:boole} shows $S_\varepsilon=\{|x-(y-1/y)|<\varepsilon\}$, that is, the set of Theorem \ref{Tbent} with $E=(-\varepsilon,\varepsilon)$.

	\begin{figure}[htb]
		\centering
		\includegraphics[width=.72\textwidth]{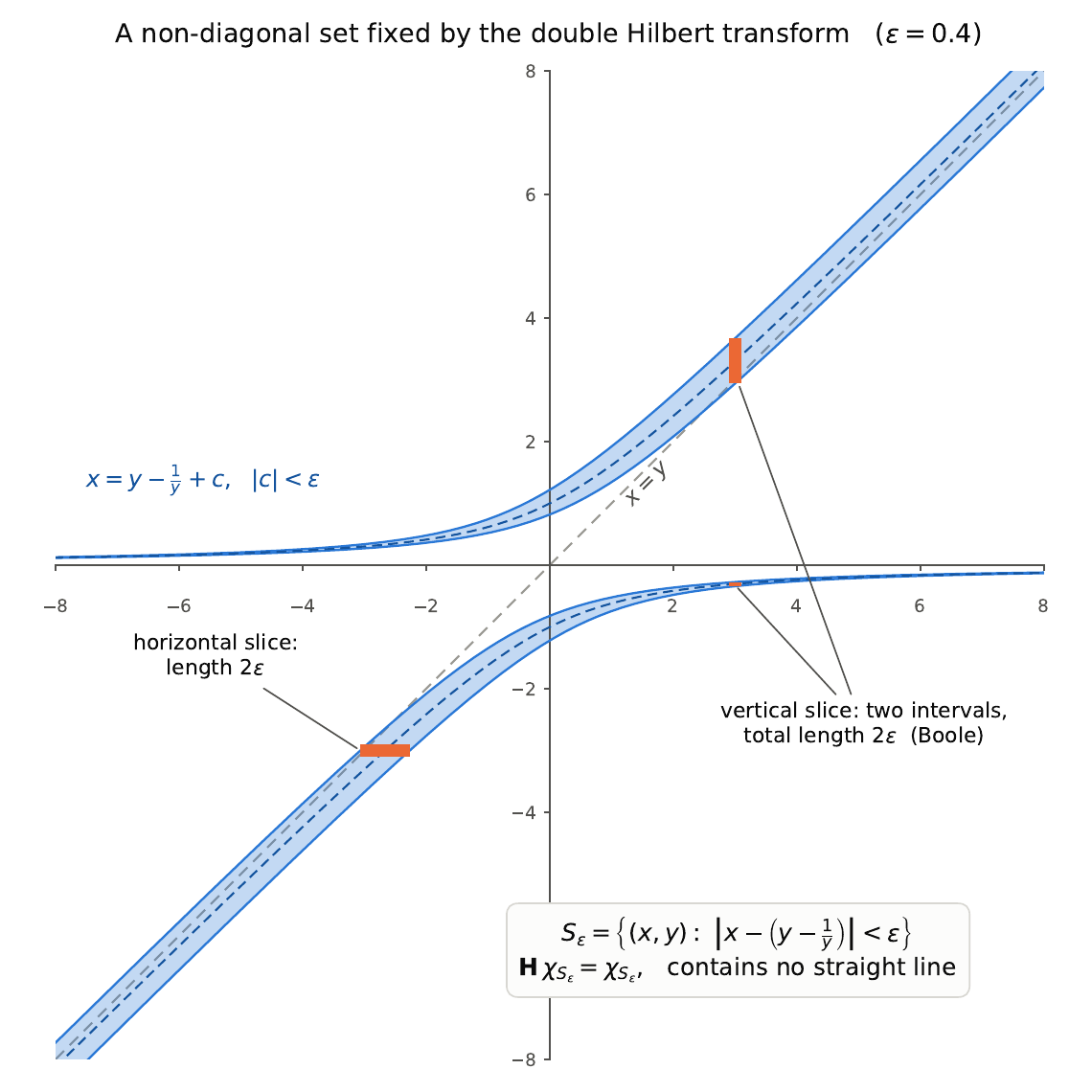}
		\caption{The bent strip $S_\varepsilon=\{|x-(y-1/y)|<\varepsilon\}$, $\varepsilon=0.4$: a union of translates of a two-branch hyperbola. Every horizontal slice is one interval of length $2\varepsilon$; every vertical slice consists of two intervals of total length exactly $2\varepsilon$, by Boole's lemma.}
		\label{fig:boole}
	\end{figure}

	The construction admits several extensions, all proved by the same two-step slice computation.

	\begin{proposition}\label{Pgeneral}
		Let $\psi,\phi$ belong to the singular Herglotz class with linear coefficients $\beta_\psi,\beta_\phi>0$, and let $E$ be measurable with $0<|E|<\infty$. Then
		$$S=\{(x,y):\ \psi(x)-\phi(y)\in E\}$$
		satisfies $\H\chi_S=\chi_S$, with horizontal and vertical cross-sections of constant measures $|E|/\beta_\psi$ and $|E|/\beta_\phi$. Diagonal sets are the case where $\psi$ and $\phi$ are affine. The same conclusion holds for suitable genuinely two-variable substitutions, for instance
		$$u(x,y)=x-y-\frac{c}{x-2y}\qquad(c>0),\qquad S=\{u\in E\}:$$
		for each fixed $y$ the map $x\mapsto u(x,y)$ and for each fixed $x$ the map $y\mapsto -u(x,y)$ belong to the singular Herglotz class, and the proof of Theorem \ref{Tbent} applies in each variable.
	\end{proposition}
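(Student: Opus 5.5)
The plan is to run the two-step slice computation from the proof of Theorem \ref{Tbent}, with Lemma \ref{Lboole}(ii) and the Remark after it (which give $\HH(f\circ\phi)=(\HH f)\circ\phi$ for the whole singular Herglotz class) used once in each variable.

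\emph{Horizontal step.} Fix $y$ and set $h_y(s)=\chi_E(s-\phi(y))$. Then $\chi_S(x,y)=h_y(\psi(x))$. The cross-section $\{x:\psi(x)\in \phi(y)+E\}$ is $\psi^{-1}(\phi(y)+E)$, which has measure $|E|/\beta_\psi$ by the Boole identity. Since $\sqrt{\beta_\psi}V_\psi$ is an isometry commuting with $\HH$, we get
$$\HH_1\chi_S(x,y)=(\HH h_y)(\psi(x))=(\HH\chi_E)(\psi(x)-\phi(y)),$$
where the last equality uses translation invariance. Put $g=\HH\chi_E\in L^2$.

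\emph{Vertical step.} Fix $x$ and set $k(s)=g(\psi(x)-s)$. Then $\HH_1\chi_S(x,\cdot)=k\circ\phi$, which lies in $L^2(dy)$ because $\int|k(\phi(y))|^2dy=\beta_\phi^{-1}\|g\|^2$. Applying commutation with $\phi$, then anticommutation of $\HH$ with the reflection $s\mapsto \psi(x)-s$, and finally $\HH^2=-\mathcal I$, we get
$$\HH_2\HH_1\chi_S(x,y)=-(\HH^2\chi_E)(\psi(x)-\phi(y))=\chi_S(x,y).$$
The vertical cross-section $\phi^{-1}(\psi(x)-E)$ has measure $|E|/\beta_\phi$ by the same Boole identity. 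If $\psi,\phi$ are affine with positive slopes, $S$ is a union of parallel lines of positive slope, i.e.\ a diagonal set, and conversely every diagonal set arises this way.

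\emph{Two-variable substitution.} For fixed $y$, the map $x\mapsto x-y-c/(x-2y)$ has the form $w-\gamma+\int(s-w)^{-1}d\mu$ with $\mu=c\,\delta_{2y}$, $\beta=1$, so it is singular Herglotz. For fixed $x$, the map $y\mapsto -u=y-x+c/(x-2y)=y-x-(c/2)/(y-x/2)$ is singular Herglotz with $\mu=\tfrac c2\delta_{x/2}$ and $\beta=1$. The computation has to be redone here, since $u$ is not of the separated form.

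\begin{itemize}
\item Horizontally: $\HH_1\chi_S(x,y)=(\HH\chi_E)(u(x,y))=g(u(x,y))$, by commutation with the $x$-map composed with $\chi_E$.
\item Vertically: write $g(u)=\tilde g(-u)$ with $\tilde g(s)=g(-s)$, so that $\HH_2[g(u(x,\cdot))]=(\HH\tilde g)(-u)$. Since $\HH$ anticommutes with reflection, $\HH\tilde g(s)=-(\HH g)(-s)$, and therefore $\HH_2\HH_1\chi_S=-\HH^2\chi_E(u)=\chi_E(u)$.
\end{itemize}

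The cross-sections have measure $|E|$ in $x$ and $|E|$ in $y$, since both maps have $\beta=1$.

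The main obstacle is justification rather than algebra. The commutation lemma must hold for general (infinitely many atoms) singular Herglotz $\psi,\phi$; I would invoke the Remark's approximation argument. The slice-wise transforms must be well defined for a.e.\ line; this follows from the $L^2$ isometry bounds. The only other risk is sign bookkeeping in the reflection step, which the explicit formulas above settle.
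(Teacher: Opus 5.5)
Your proposal is correct and takes essentially the paper's route: the paper justifies this proposition by running the two-step slice computation of Theorem \ref{Tbent} once in each variable, with Lemma \ref{Lboole}(ii) and the subsequent Remark covering general singular Herglotz maps. That is exactly what you do, including handling $y\mapsto -u(x,y)$ in the two-variable example by the reflection $s\mapsto -s$ and the anticommutation of $\HH$ with reflections.
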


	Two members of the family are shown in Figure \ref{fig:gallery}.

	\begin{figure}[htb]
		\centering
		\includegraphics[width=\textwidth]{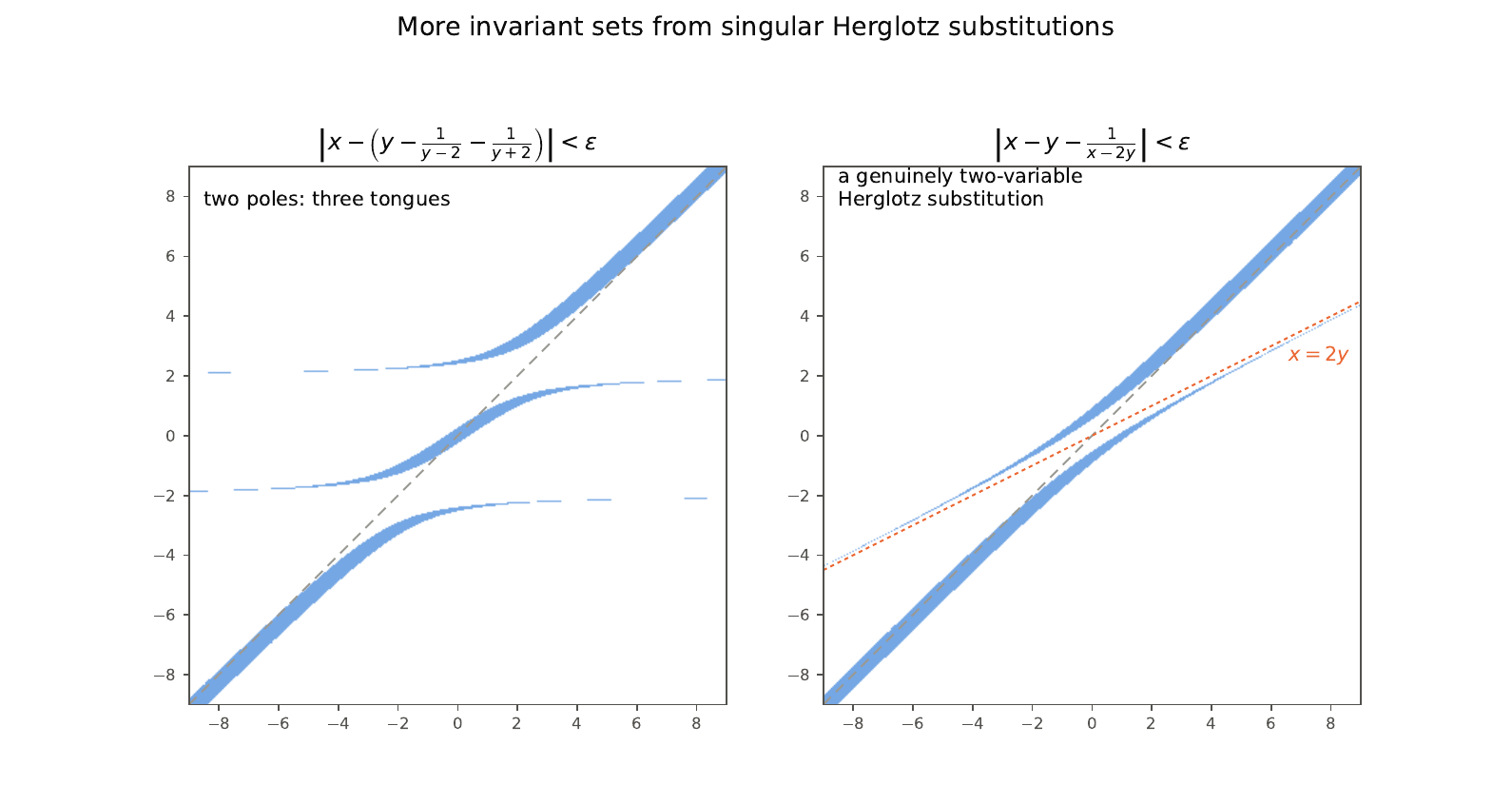}
		\caption{Two further invariant sets from Proposition \ref{Pgeneral}. Left: the two-pole substitution $\phi(y)=y-\frac{1}{y-2}-\frac{1}{y+2}$ produces three tongues. Right: the genuinely two-variable substitution $u=x-y-\frac{1}{x-2y}$; the thin leaf along the line $x=2y$ is part of the set, the two-variable analogue of the hyperbola's tongues.}
		\label{fig:gallery}
	\end{figure}

	\begin{example}[disjoint unions]\label{Egap}
		The class of invariant sets is closed under disjoint unions, and disjointness of two bent families is possible, though delicate. Let $0<\delta_0<\varepsilon'$, let $E_1$ be an interval of length $\delta_0$ and $E_2=E_1-\varepsilon'$, and let
		$$\phi_1(y)=y-\frac{\varepsilon'}{y+1},\qquad \phi_2(y)=y-\frac{\varepsilon'}{y-1}.$$
		Then $\delta:=\phi_1-\phi_2=2\varepsilon'/(y^2-1)$ omits the interval $(-2\varepsilon',0)$, while the set of shifts $c$ with $|(E_1+c)\cap E_2|>0$ is contained in $(-\varepsilon'-\delta_0,-\varepsilon'+\delta_0)\subset(-2\varepsilon',0)$. Hence the two bent strips $\{x-\phi_i(y)\in E_i\}$ are disjoint and their union is an invariant set whose horizontal slices consist of two intervals separated by a gap of variable length. In particular the union is not of the separated form of Proposition \ref{Pgeneral}. A computation with the ranges of $\delta$ shows that such disjointness requires $\beta_{\phi_1}=\beta_{\phi_2}$.
	\end{example}

	\section{Towards a classification of invariant sets}\label{sec:class}

	\subsection{The slice criterion and two necessary conditions}

	Throughout this section $S_y=\{x:(x,y)\in S\}$ and $S^x=\{y:(x,y)\in S\}$ denote the horizontal and vertical cross-sections, and
	$$F(t,y)=\widehat{\chi_{S_y}}(t)=\int_{S_y}e^{-ixt}\,dx .$$
	We interpret the invariance spectrally: $(\H-\mathcal I)\chi_S=0$ \emph{in the spectral sense} means that the tempered distribution $\widehat{\chi_S}$ is supported in $Q_{2,4}\cup\{0\}$, i.e.\ it vanishes on the open quadrants $Q_{1,3}$ (equivalently, $\mathcal P^1_+\mathcal P^2_+\chi_S=\mathcal P^1_-\mathcal P^2_-\chi_S=0$) and also on the punctured coordinate axes, where the symbol is discontinuous and where $\H$ in fact annihilates: functions of one variable alone are killed by $\H$, as are constants, which is why the origin is exempt. For sets with cross-sections of finite measure the slice-wise and the spectral interpretations agree; for the other examples of this note the relation between the two is discussed in Remark \ref{Rbmo}.

	\begin{remark}[what $\H$ means on $L^\infty$]\label{Rbmo}
		For sets with cross-sections of infinite measure -- the positive-density examples and the cones below -- the slice-wise definition breaks down at the first step: if $S_y$ contains a half-line, $\HH_1\chi_S(\cdot,y)$ is the Hilbert transform of an indicator of infinite measure, and its truncations diverge. The divergence, however, is a constant in $x$:
		$$\frac1\pi\int_a^R\frac{dt}{x-t}=\frac1\pi\log|x-a|-\frac1\pi\log(R-x)\qquad(a<x<R),$$
		so $\HH_1\chi_S(\cdot,y)$ exists modulo constants, i.e.\ as an element of $BMO(\R)$, and $\HH_1\chi_S$ exists modulo functions of $y$ alone. Applying $\HH_2$ produces a function modulo functions of $x$ alone, plus the transform of a function of $y$ alone. Altogether $\H$ maps $L^\infty(\R^2)$ into the product $BMO$ space of Chang and Fefferman \cite{CF}, whose elements are defined modulo $a(x)+b(y)$. On the Fourier side functions of one variable are exactly the distributions carried by the coordinate axes, and equality $\H f=f$ in product $BMO$ -- tested against Schwartz functions whose Fourier transforms vanish near both axes -- is exactly the vanishing of $\hat f$ on the open quadrants $Q_{1,3}$. The spectral interpretation is therefore not a convenient convention but the only meaning that $\H\chi_S=\chi_S$ can have for such sets. Concretely, for one wave with $m>0$,
		\begin{align*}
		\HH_1[\tmop{sign}(y-mx)]&=-\frac2\pi\log|y-mx|+b(y),\\
		\HH_2\big[\log|y-mx|\big]&=-\frac\pi2\,\tmop{sign}(y-mx)+a(x),
		\end{align*}
		the first identity being $\HH\,\tmop{sign}=\frac2\pi\log|\cdot|$ modulo constants, the second the statement that $\log|t|$ and $\pi\chi_{(-\infty,0)}(t)$ are the boundary values of $\log w$ on $\R$; hence $\H[\tmop{sign}(y-mx)]=\tmop{sign}(y-mx)$ modulo $a(x)+b(y)$, and likewise for finite sums of such waves. For the strips of Section \ref{sec:bent} the slices have finite measure and nothing of this is needed; for the periodic sets of Theorem \ref{Tcarry} symmetric truncations converge and reproduce the Fourier-series computation given there.

		When the set has a center of symmetry the ambiguity can be removed by a canonical normalization. Let $\psi$ be a smooth radial cutoff, $\psi=1$ near $0$, and $\psi_R=\psi(\cdot/R)$.
	\end{remark}

	\begin{lemma}[cutoff normalization]\label{Lcut}
		Let $G\in L^\infty(\R^2)$ be homogeneous of degree $0$ and odd about a point $p$, with $\hat G$ supported in $Q_{2,4}\cup\{0\}$. Then $\H[G\,\psi_R(\cdot-p)]\to G$ as $R\to\infty$, uniformly on compact sets and in $\mathcal S'(\R^2)$.
	\end{lemma}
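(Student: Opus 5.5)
Translate so that $p=0$. Then $G$ is homogeneous of degree $0$ and odd, so $\hat G$ is a tempered distribution that is homogeneous of degree $-2$ and odd. Since $G$ is odd, $\hat G$ has no point mass at the origin. Write $G_R=G\psi_R$. Then
$$\widehat{G_R}=\hat G * \widehat{\psi_R}=\hat G * R^2\hat\psi(R\,\cdot),$$
which is a mollification of $\hat G$ at scale $1/R$. Because $\H$ is the Fourier multiplier $\phi$ and $\phi=1$ on the support of $\hat G$ away from the axes, we get
$$\H G_R-G=(\H G_R-G_R)+(G_R-G)=-2\,\mathcal F^{-1}\big[\chi_{Q_{1,3}}\widehat{G_R}\big]+(G_R-G).$$
The term $G_R-G$ tends to $0$ uniformly on compacts, since $\psi_R=1$ on $B(0,cR)$; it also tends to $0$ in $\mathcal S'$ by dominated convergence. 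So the whole proof reduces to showing that
$$E_R:=\mathcal F^{-1}\big[\chi_{Q_{1,3}}(\hat G*\widehat{\psi_R})\big]\to 0$$
uniformly on compacts and in $\mathcal S'$.

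The first step is to use the scaling. Put $D_Rf(x)=f(x/R)$. Homogeneity gives $G_R=D_R(G\psi)$, and $\H$ commutes with the dilations $D_R$ because $\phi$ is homogeneous of degree $0$. Hence $\H G_R=D_R\,\H(G\psi)$, so
$$\H G_R(x)=\H(G\psi)(x/R).$$
Uniform convergence on compacts is therefore equivalent to two facts about the single function $u=\H(G\psi)$: that $u$ is continuous at the origin, and that $u(0)$ is the right constant. The right constant is not literally $G(0)$, since $G$ is discontinuous at $0$; convergence to $G$ on compacts must be read on compacts avoiding $p$, or as $\H G_R\to G$ locally uniformly away from $p$. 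So I would actually prove the following: $u(x/R)\to G(x)$ for $x$ in compacts avoiding $0$, uniformly in $x$. By homogeneity this is $u(y)\to G(y)$ as $|y|\to 0$, uniformly in direction. So the task becomes showing
$$u=\H(G\psi)=G+\H\big(G(\psi-1)\big)$$
near $0$, with $\H(G(\psi-1))\to 0$ at $0$.

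The second step handles $G(\psi-1)$, which vanishes near the origin and is homogeneous at infinity. Split it as $G(\psi-1)=-G+G\psi$ in the distributional sense. Then $\H(G(\psi-1))=\H(G\psi)-G$ modulo functions of one variable, because $\H G=G$ exactly in product $BMO$ by Remark \ref{Rbmo}. This is circular, so I would instead compute $\H(G\psi)$ directly. Near $0$, $\H(G\psi)-G\psi$ equals $-2\,\mathcal F^{-1}[\chi_{Q_{1,3}}(\hat G*\hat\psi)]$. Here $\hat G*\hat\psi$ is smooth and decays like $|\xi|^{-2}$; the decay follows from the homogeneity and from $\hat G$ vanishing on $Q_{1,3}$. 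Its restriction to $Q_{1,3}$ is supported only through the mollification tail near the axes, and away from the axes it decays rapidly, since $\hat G=0$ there and $\hat\psi$ is Schwartz. So the only mass comes from strips of width $O(1)$ around the axes.

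The main obstacle is exactly this contribution from the strips around the axes: the symbol is discontinuous there, and the inverse Fourier transform of something like $\chi_{\xi_1>0,\xi_2>0}\,|\xi|^{-2}$ localized near an axis is only log-integrable. I would use oddness as the key cancellation. Because $\hat G$ is odd, the contributions of $Q_1$ and $Q_3$ near the origin of frequency space cancel to leading order. Beyond that, the kernel restricted to the strips $\{|\xi_2|<1,\ \xi_1>0\}$ behaves like a function of $\xi_1$ alone times a bounded factor. Its inverse transform is therefore a function concentrated near the line $x_1$-axis direction in a controlled way, and after rescaling by $1/R$ it tends to $0$ in $\mathcal S'$ and locally uniformly off $p$. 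I would first verify this explicitly for the fan and the single wave $\tmop{sign}(y-mx)$ using the formulas of Remark \ref{Rbmo}, where the $a(x)+b(y)$ ambiguity vanishes by oddness. I would then pass to a general $G$ by writing $G$ as a superposition of such waves over directions of positive slope, with the angular density of $\hat G$ as weight, and using dominated convergence.
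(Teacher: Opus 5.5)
Your scaling step is correct and matches the paper. With $e=(\phi-1)(\hat G*\hat\psi)=-2\chi_{Q_{1,3}}(\hat G*\hat\psi)$ one has $\H(G\psi_R)-G\psi_R=\check e(\cdot/R)$, and $G\psi_R=G$ on $\{|z|<cR\}$. So everything reduces to showing that $\check e$ is bounded and continuous with $\check e(0)=0$.

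This also shows two things about your reformulation. There is no need to restrict to compacts avoiding $p$. And the relevant continuity is that of $u-G\psi=\check e$, not of $u$ itself, which inherits the discontinuity of $G$ at $0$.

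The idea you are missing is that $\operatorname{supp}\hat G$ is a \emph{closed} cone: closed because it is a support, conic because $\hat G$ is homogeneous of degree $-2$. It is contained in $Q_{2,4}\cup\{0\}$, so its trace on the unit circle is a compact subset of the open arcs of negative slope. Hence the cone stays at a positive angle from both axes, and therefore from $\overline{Q_{1,3}}$.

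Consequently every $\xi\in Q_{1,3}$, including points close to the axes, lies at distance $\gtrsim|\xi|$ from $\operatorname{supp}\hat G$. So the smooth function $\hat G*\hat\psi$ decays rapidly on $Q_{1,3}$, and $e$ is bounded and integrable. Then $\check e$ is bounded and continuous. Since $\hat G$ is odd while $\hat\psi$ and $\phi$ are even, $e$ is odd and $\check e(0)=c\int e=0$ exactly. This gives uniform convergence on compacts, and boundedness of $\check e$ gives convergence in $\mathcal S'$.

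The obstacle you single out does not occur under the hypothesis. You expected mass of $\chi_{Q_{1,3}}(\hat G*\hat\psi)$ in strips of width $O(1)$ along the axes, with only log-integrable inverse transforms. In fact the mass of $e$ sits in a bounded neighborhood of the origin. Your oddness cancellation is the right ingredient, but it is exact, not ``to leading order''.

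The workaround you propose would not close the argument as written. First, the formulas of Remark \ref{Rbmo} hold only modulo $a(x)+b(y)$. Oddness alone does not determine what $\H$ does to the truncated wave, so the single-wave case would still need a separate argument; for a single wave it is just the argument above.

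Second, and more seriously, superposing waves $\operatorname{sign}(y-mx)$ with an angular weight and invoking dominated convergence requires that weight to be a finite measure. The weight is essentially $\Omega'$, where $G(re^{i\theta})=\Omega(\theta)$, so this means $\Omega$ must have bounded variation. The lemma is stated for arbitrary $G\in L^\infty$, and this fails there. For instance, take a cone of Theorem \ref{Tcone} over a set $A$ whose trace on a positive-slope arc has infinitely many boundary points, or any merely bounded, non-BV $\Omega$ satisfying condition (b) there. Then $\Omega'$ is a distribution of infinite total variation and there is no dominating function.

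So your plan covers at most fans and finite combinations of waves, and even there only after the single-wave case is actually computed.
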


	\begin{proof}
		Take $p=0$ and let $\phi$ be the symbol of $\H$. Since $G\psi_R\in L^2$, the function $\H(G\psi_R)-G\psi_R$ is the inverse Fourier transform of $e_R=(\phi-1)(\hat G*\hat\psi_R)$. Homogeneity of $\hat G$ (of degree $-2$) gives $\hat G*\hat\psi_R=R^2(\hat G*\hat\psi)(R\,\cdot)$, and $\phi$ is dilation invariant, so $e_R=R^2e(R\,\cdot)$ with $e=(\phi-1)(\hat G*\hat\psi)$. Now $e$ is bounded and supported in $\overline{Q_{1,3}}$, where it decays rapidly, because $\hat\psi$ is a Schwartz function and $\hat G$ is carried by a closed cone in $Q_{2,4}\cup\{0\}$, at positive angular distance from $Q_{1,3}$; hence $e\in L^1$. Since $\hat G$ is odd while $\hat\psi$ and $\phi$ are even, $e$ is odd and $\int e=0$. Therefore $\H(G\psi_R)-G\psi_R=\check e(\cdot/R)$ with $\check e$ continuous and $\check e(0)=0$, which tends to $0$ uniformly on compact sets and boundedly; and $G\psi_R=G$ on $\{|z|<cR\}$.
	\end{proof}

	Without oddness the same argument gives $\H(G\psi_R)\to G+c$ with a constant $c=\int e$; in the same spirit, an atom of $\hat f$ at the origin -- a nonzero mean -- is annihilated by any such normalization, which is the constant $\frac12$ in Theorem \ref{Tcarry}(iii). Numerically, for the fans of Proposition \ref{Pfan} the error $\H(G\psi_R)-G$ on a fixed window decreases like $1/R$, while for functions whose spectrum meets $Q_{1,3}$ it stays of order one.

	\begin{lemma}[slicing]\label{Lslice}
		Let $S$ be measurable with $M:=\sup_y|S_y|<\infty$ and let $\widehat{\chi_S}$ vanish on $Q_{1,3}$. Then for every $t>0$ the function $F(t,\cdot)$ coincides a.e.\ with the boundary values of a function analytic in $\C_-$ and bounded by $M$; for $t<0$, the same holds in $\C_+$.
	\end{lemma}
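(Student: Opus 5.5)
Fix $t>0$; the idea is to show that the $y$-spectrum of $F(t,\cdot)$ lies in $(-\infty,0]$, and then appeal to the Paley--Wiener/Hardy space description. First, $F(t,\cdot)$ makes sense pointwise: $|S_y|\le M$, so $\chi_{S_y}\in L^1$, and $|F(t,y)|\le |S_y|\le M$ for all $t,y$. Thus $F$ is a bounded measurable function on $\R^2$. Since $F$ is only bounded in $y$ (not necessarily square integrable, as $S$ may have infinite measure), the statement should be read distributionally. The partial Fourier transform in $y$ of $F$ equals $\widehat{\chi_S}$ (up to normalization), taken as a tempered distribution in $(t,\xi_2)$.

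The hypothesis that $\widehat{\chi_S}$ vanishes on $Q_{1,3}$ says that for every Schwartz $\Phi(t,\xi_2)$ supported in $\{t>0,\xi_2>0\}$ we have $\langle\widehat{\chi_S},\Phi\rangle=0$. To localize in $t$, take $\Phi=a(t)b(\xi_2)$ with $a\in C_c^\infty(0,\infty)$ and $b\in C_c^\infty(0,\infty)$. Unwinding the partial transform gives
$$\int_0^\infty a(t)\,\langle F(t,\cdot),\check b\rangle\,dt=0$$
for all such $a$. Hence for a.e.\ $t>0$, $\langle F(t,\cdot),\check b\rangle=0$, where $\check b$ is the inverse Fourier transform. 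The exceptional null set could depend on $b$, so we restrict $b$ to a countable family that is dense in $C_c^\infty(0,\infty)$. Together with the bound $|F|\le M$, this shows that for a.e.\ $t>0$ the distributional Fourier transform of $F(t,\cdot)$ is supported in $(-\infty,0]$.

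The next step removes the "a.e.\ $t$": $F(t,y)$ is continuous in $t$ for each $y$ by dominated convergence, and uniformly bounded, so $t\mapsto\langle F(t,\cdot),\check b\rangle$ is continuous. The vanishing therefore holds for every $t>0$.

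The remaining step is the one-variable fact that a bounded function $g$ on $\R$ whose spectrum lies in $(-\infty,0]$ is a.e.\ the boundary value of a function in $H^\infty(\C_-)$ with the same sup norm. I would prove it via Poisson extension. Let $G(y+is)=(P_{|s|}*g)(y)$ for $s<0$; this is harmonic and bounded by $\|g\|_\infty\le M$. Applying $\partial_{\bar w}$ gives, on the Fourier side, a multiple of $(\xi+|\xi|)\,\hat g(\xi)e^{-|s||\xi|}$. This vanishes because $\xi+|\xi|=0$ on $\xi\le0$, which is the support of $\hat g$. Some care is needed at $\xi=0$, where $e^{-|s||\xi|}$ is not smooth. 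The cleanest route is to test against $\check b$ with $b$ compactly supported in $(0,\infty)$, or to note that $\partial_{\bar w}G$ is a bounded harmonic function with spectrum in $\{0\}$, hence a polynomial and in fact a constant. That constant tends to $0$ as $s\to0$ in the weak-$*$ sense, since $\hat g$ is supported on $(-\infty,0]$. So $G$ is analytic, bounded by $M$, and has nontangential limits $g$ a.e. by Fatou's theorem.

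For $t<0$ the argument is the same with $Q_3$, giving spectrum in $[0,\infty)$ and analyticity in $\C_+$; alternatively one uses $F(-t,y)=\overline{F(t,y)}$. I expect the main obstacle to be the measure-theoretic passage from the two-dimensional distributional vanishing to the statement for each fixed $t$, together with the origin issue in the bounded-function Paley--Wiener step. Continuity in $t$ and a countable dense family of test functions handle the former.
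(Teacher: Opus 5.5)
Your proposal is correct and follows the paper's proof essentially step for step. You test $\widehat{\chi_S}$ against $a\otimes b$ with $a,b\in C_c^\infty((0,\infty))$, use Fubini and the continuity of $t\mapsto\int F(t,y)\hat b(y)\,dy$ to get the spectral condition for every $t>0$ (which makes your countable dense family unnecessary), and then show that the Poisson extension of $F(t,\cdot)$ to $\C_-$ is annihilated by $\partial_{\bar w}$. Your treatment of the frequency $0$ is more careful than the paper's ``direct computation''. The cleanest way to see that the leftover constant value of $\partial_{\bar w}G$ vanishes is to let $s\to-\infty$ and use the gradient bound $|\nabla G|\lesssim M/|s|$ for bounded harmonic functions, rather than your vaguer limit $s\to 0$.
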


	\begin{proof}
		We have $|F(t,y)|\leq|S_y|\leq M$, and $t\mapsto F(t,y)$ is continuous by dominated convergence on the finite-measure slice. Let $\alpha,\beta\in C^\infty_c((0,\infty))$. Then, by Fubini (all integrals converge absolutely; we omit an irrelevant positive constant),
		$$0=\langle\widehat{\chi_S},\alpha\otimes\beta\rangle=\iint\chi_S(x,y)\hat\alpha(x)\hat\beta(y)\,dxdy=\int_0^\infty\alpha(t)\,G_\beta(t)\,dt,$$
		where $G_\beta(t)=\int_\R F(t,y)\hat\beta(y)\,dy$.
		Since $G_\beta$ is continuous, $G_\beta\equiv 0$ on $(0,\infty)$; that is, for each fixed $t>0$ the distributional Fourier transform of $F(t,\cdot)$ annihilates every test function supported in $(0,\infty)$, so its support lies in $(-\infty,0]$. Now let $u$ be the Poisson extension of $F(t,\cdot)$ to $\C_-$; it is bounded by $M$. On the Fourier side the Poisson multiplier $e^{-|v||\xi|}$ coincides, on the support $\{\xi\leq 0\}$, with the multiplier $e^{-v\xi}$ of the analytic extension of $e^{iy\xi}$ to $w=y+iv$, $v<0$; a direct computation then gives $\partial_{\bar w}u=0$. Hence $u\in H^\infty(\C_-)$ with boundary values $F(t,\cdot)$. The case $t<0$ follows since $F(-t,y)=\overline{F(t,y)}$.
	\end{proof}

	\begin{proposition}[constancy of cross-sections]\label{Pconst}
		Under the assumptions of Lemma \ref{Lslice}, $|S_y|$ is a.e.\ constant. Symmetrically, if the vertical cross-sections are uniformly bounded, $|S^x|$ is a.e.\ constant.
	\end{proposition}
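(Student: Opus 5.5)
The plan is to take $t\to0^+$ in Lemma \ref{Lslice}. Note that $F(0,y)=|S_y|$. For each $t>0$, $F(t,\cdot)$ is the boundary function of some $u_t\in H^\infty(\C_-)$ with $\|u_t\|_\infty\le M$. We will show that $F(0,\cdot)$ is itself the boundary value of a bounded analytic function in $\C_-$. Since $F(0,\cdot)$ is real-valued, that analytic function is forced to be constant.

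\emph{Step 1: convergence as $t\to0^+$.} For each fixed $y$, dominated convergence on the finite-measure slice gives $F(t,y)\to F(0,y)=|S_y|$. The family is bounded by $M$. Hence $F(t,\cdot)\to|S_\cdot|$ weak-$*$ in $L^\infty(\R)$, i.e.\ against every $L^1$ function.

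\emph{Step 2: the limit is analytic.} Spectral support is preserved under weak-$*$ limits. Testing against $\hat\beta$ with $\beta\in C^\infty_c((0,\infty))$, each $G_\beta(t)=\int F(t,y)\hat\beta(y)\,dy$ vanishes for $t>0$. Letting $t\to0^+$ gives $\int|S_y|\hat\beta(y)\,dy=0$. So the distributional Fourier transform of $y\mapsto|S_y|$ is supported in $(-\infty,0]$. Running the same Poisson-extension argument as in Lemma \ref{Lslice}, the Poisson extension $u_0$ of $|S_\cdot|$ to $\C_-$ is analytic and bounded by $M$. (Equivalently, the $u_t$ converge locally uniformly to $u_0$, since Poisson kernels lie in $L^1$.)

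\emph{Step 3: a real boundary function forces constancy.} The Poisson extension of a real function is real. So $u_0$ is a real-valued analytic function on the connected domain $\C_-$. By the open mapping theorem, or by Cauchy--Riemann, it is constant, say $u_0\equiv c$. The boundary values recover $|S_y|=c$ for a.e.\ $y$, by a.e.\ nontangential convergence of Poisson integrals.

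Alternatively, one can skip Step 3's function theory: the Fourier transform of $|S_\cdot|$ is supported in $(-\infty,0]$, and since $|S_\cdot|$ is real its Fourier transform is Hermitian-symmetric, hence also supported in $[0,\infty)$. Thus it is supported at $\{0\}$, so $|S_\cdot|$ is a polynomial. Being bounded, it is constant.

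The vertical statement follows by exchanging the roles of $x$ and $y$. The hypothesis that $\widehat{\chi_S}$ vanishes on $Q_{1,3}$ is symmetric under the swap, so Lemma \ref{Lslice} applies with slices $S^x$.

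The main obstacle is the justification in Step 2 that the support condition survives the limit $t\to0^+$. This is a boundary issue: $t=0$ lies on the axis, where Lemma \ref{Lslice} was not asserted. It is handled by the continuity of $G_\beta$ up to $t=0$, which holds by dominated convergence since $\hat\beta\in L^1$ and $|F|\le M$.
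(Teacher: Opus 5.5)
Your proof is correct and follows the paper's route. You let $t\to0^+$ and show that the bounded pointwise limit $|S_\cdot|$ is still the boundary value of an $H^\infty(\C_-)$ function; the paper cites weak-star closedness of that space, and your passage to the limit in $G_\beta$ is exactly what makes it closed. You then conclude, as the paper does, that a real boundary value forces a constant, and your Hermitian-symmetry ending (support in $\{0\}$, hence a bounded polynomial, hence constant) is an equally valid shortcut for that last step.
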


	\begin{proof}
		As $t\to 0^+$, $F(t,y)\to|S_y|$ pointwise, boundedly. The space of boundary values of $H^\infty(\C_-)$ is weak-star closed in $L^\infty(\R)$, so $|S_\cdot|$ is such a boundary value; being real a.e., the imaginary part of its analytic extension is a bounded harmonic function with vanishing boundary values, hence zero, and the extension is a real constant.
	\end{proof}

	\begin{proposition}[barycenters are Herglotz]\label{Pbary}
		Assume in addition that $y\mapsto\int_{S_y}|x|\,dx$ is locally bounded. Then the first moment $m_1(y)=\int_{S_y}x\,dx$ coincides a.e.\ with the boundary values of an analytic map $G:\C_-\to\{\Im w\leq 0\}$. Being real a.e., $m_1$ is (the boundary value of) a real Nevanlinna function with purely singular representing measure. Symmetrically for vertical slices.
	\end{proposition}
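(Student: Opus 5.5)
The plan is to differentiate $F(t,y)$ in $t$ at $t=0^+$ and apply Lemma \ref{Lslice} to the difference quotients. For $t>0$, Lemma \ref{Lslice} gives $F(t,\cdot)\in H^\infty(\C_-)$ with bound $M$. Since $F(0,y)=|S_y|=c$ is a.e.\ constant by Proposition \ref{Pconst}, consider
$$Q_t(y)=\frac{F(t,y)-c}{-it}=\int_{S_y}\frac{e^{-ixt}-1}{-it}\,dx .$$
By the moment assumption, $|Q_t(y)|\le\int_{S_y}|x|\,dx$, which is locally bounded. As $t\to0^+$, $Q_t(y)\to m_1(y)$ pointwise by dominated convergence on each slice.

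Each $Q_t$ is the boundary value of an analytic function in $\C_-$. It is not uniformly bounded, but the following statement is enough. The holomorphic functions $G_t=(u_t-c)/(-it)$, with $u_t$ the Poisson extension of $F(t,\cdot)$, satisfy
$$\Im G_t=\frac{\Re(u_t)-c}{t}\le 0 \qquad\text{in }\C_-,$$
because $\Re F(t,y)=\int_{S_y}\cos(xt)\,dx\le |S_y|=c$ and the Poisson extension preserves the inequality $\Re u_t\le c$. So each $G_t$ maps $\C_-$ into $\{\Im w\le 0\}$, and this sign condition is exactly the Herglotz (Nevanlinna) structure we want in the limit.

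To pass to the limit $t\to0^+$, take a subsequence. The maps $-G_t$, composed with $w\mapsto\bar w$, are Herglotz functions on $\C_+$. These form a normal family after composing with a Möbius map onto the disc, so a subsequence converges locally uniformly either to a Nevanlinna function $G$ or to $\infty$. To exclude $\infty$ and to identify the boundary values, test against Poisson kernels. The Poisson integrals of $Q_t$ converge to those of $m_1$ provided $m_1$ satisfies the Poisson integrability condition $\int|m_1(y)|(1+y^2)^{-1}dy<\infty$. This condition is the main obstacle, since we only know local boundedness.

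I would handle the obstacle with the Nevanlinna representation itself. The Herglotz measures $\mu_t$ of $-G_t$ have absolutely continuous part $-\Im Q_t\,dy/\pi$, and $\Im Q_t\to0$. The local bounds on $Q_t$ give uniform control on compacts, and the normalization $\Im G_t(-i)$ bounds the total $(1+s^2)^{-1}$-mass, because $\Im G_t(-i)=\int(\Re F(t,y)-c)/t\,P(y)\,dy$, where $P$ is the Poisson kernel for $-i$, and $(c-\Re F)/t\le t\int_{S_y}x^2/2\,dx$ fails to be uniform only far away. If this direct bound is not available, I would instead restrict to compact intervals via local Fatou/Privalov arguments: on any interval $I$, $G_t$ has locally bounded boundary values converging a.e., so the limit $G$ is a Nevanlinna function whose nontangential boundary values on $I$ equal $m_1$ a.e.

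Finally, since $m_1$ is real a.e., the absolutely continuous part of the representing measure, $-\Im G/\pi$ on $\R$, vanishes. The measure is therefore purely singular, which is the claim. The vertical statement follows by exchanging the roles of the variables.
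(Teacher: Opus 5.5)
Your setup is the paper's: the same $G_t=(c-u_t)/(it)$, the same sign computation $\Im G_t=(\Re u_t-c)/t\le 0$, the same boundary limit $Q_t\to m_1$ dominated by $\int_{S_y}|x|\,dx$, and a normal-family limit. The paper closes the argument by asserting that this local bound dominates the Poisson extensions. You are right that this needs Poisson integrability. In fact that already fails for the diagonal strip, where $\int_{S_y}|x|\,dx=2\varepsilon|y|$ for $|y|\geq\varepsilon$ and $G_t\to 2\varepsilon w$; the linear term comes from Herglotz mass escaping to infinity.

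Neither of your repairs closes this step, so there is a genuine gap exactly where the limit must be shown finite and its boundary values identified. The first repair is circular. $\Im G_t(-i)$ \emph{is}, up to sign, the $(1+s^2)^{-1}$-weighted mass of the representing measure, so what you need is a bound on it uniform in $t$. The estimate $(c-\Re F)/t\le\frac t2\int_{S_y}x^2\,dx$ uses a second moment that is not assumed, and it gives nothing uniform in the tail, which is the whole difficulty. The second repair presupposes a finite limit. Nothing on $I$ prevents mass far from $I$ from driving $G_{t_n}\to\infty$ or adding a term $\beta w$. Also, locally uniform convergence in $\C_-$ does not by itself carry boundary values along. (A small slip as well: the Herglotz function is $w\mapsto\overline{G_t(\bar w)}$, while $w\mapsto -G_t(\bar w)$ is antiholomorphic.)

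The missing idea is that the \emph{local} bound on $\Re Q_t$ already controls the far mass, by monotonicity. Since $G_t$ is bounded,
$$\overline{G_t(\bar w)}=\gamma_t+\int_\R\frac{1+sw}{s-w}\,d\sigma_t(s),\qquad d\sigma_t(s)=\frac{-\Im Q_t(s)}{\pi(1+s^2)}\,ds,$$
where $0\le-\Im Q_t(s)\le\int_{S_s}|x|\,dx$ and $-\Im Q_t\to0$ pointwise. Fix compact intervals $I\Subset J$. A.e.\ on $I$ one has $\Re Q_t=\gamma_t+h_t+K_t$.
\begin{itemize}
\item $h_t$ is the contribution of $\sigma_t|_J$. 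It tends to $0$ in $L^2(I)$, by $L^2$-boundedness of the Hilbert transform and dominated convergence.
\item $K_t(y)=\int_{\R\setminus J}\frac{1+sy}{s-y}\,d\sigma_t(s)$ is smooth on $I$, with $K_t'(y)=\int_{\R\setminus J}\frac{1+s^2}{(s-y)^2}\,d\sigma_t(s)\geq c_{I,J}\,\sigma_t(\R\setminus J)$.
\end{itemize}
Since $|\Re Q_t|\le\sup_I\int_{S_y}|x|\,dx$ on $I$, comparing averages of $\Re Q_t$ near the two endpoints of $I$ bounds $\sigma_t(\R\setminus J)$ uniformly in $t$, and then $\gamma_t$. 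Helly's theorem on $\R\cup\{\infty\}$ then produces the limit together with its boundary values.

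Note what this yields. Since $\sigma_t(J)\to0$ for every compact $J$, the limit measure sits at $\infty$ and the limit is $G(w)=\beta w+\gamma$ with $\beta\ge0$. Moreover $\Re Q_{t_n}\to\beta y+\gamma$ in $L^2_{\mathrm{loc}}$, so $m_1$ is a.e.\ affine. Under the stated local-boundedness hypothesis the representing measure is therefore zero. Your last step is correct, but the singular measure it produces is trivial. The Boole strips $\{x-(y-1/y)\in E\}$, for which $\int_{S_y}|x|\,dx\sim|E|/|y|$ near $y=0$, do not satisfy the hypothesis.
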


	\begin{proof}[Proof (sketch)]
		Let $a=|S_y|$ (constant). The analytic extension of $F(t,\cdot)$ satisfies $|F(t,w)|\leq a$ in $\C_-$, so $a-F(t,w)$ has nonnegative real part and $G_t:=(a-F(t,\cdot))/(it)$ maps $\C_-$ into $\{\Im\leq 0\}$. On the boundary,
		$$G_t(y)=\int_{S_y}\frac{1-e^{-ixt}}{it}\,dx\longrightarrow m_1(y)\qquad(t\to 0^+),$$
		dominated by $\int_{S_y}|x|dx$; the same bound dominates the Poisson extensions, so $(G_t)$ is a normal family on $\C_-$ and any locally uniform limit $G$ is analytic with $\Im G\leq 0$ and boundary values $m_1$ a.e.
	\end{proof}

	Note that for the bent strip of Theorem \ref{Tbent} one has $m_1(y)=|E|\,\phi(y)+c$: the proposition recovers exactly the Herglotz datum of the example, and shows that the singular Herglotz class is forced on every invariant set, not chosen by the construction.

	\subsection{Sets with connected cross-sections}

	\begin{theorem}[classification: interval slices]\label{Tinterval}
		Let $S$ have all cross-sections of finite measure, horizontal cross-sections uniformly bounded, and suppose that a.e.\ horizontal slice is (up to a null set) an interval. Then $\H\chi_S=\chi_S$ if and only if
		$$S=\{(x,y):\ 0<x-\phi(y)<a\}\quad\text{up to a null set},$$
		for some $a>0$ and some $\phi$ of the singular Herglotz class \eqref{herglotz} with $\beta>0$ and purely singular $\mu$. Diagonal sets correspond exactly to $\mu=0$.
	\end{theorem}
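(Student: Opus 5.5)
The sufficiency direction is immediate. If $S=\{0<x-\phi(y)<a\}$, then $S$ is the bent strip of Theorem \ref{Tbent} with $E=(0,a)$. The slices have finite measure, so the slice-wise and spectral interpretations agree and $\H\chi_S=\chi_S$. When $\mu=0$, $\phi$ is affine with positive slope and $S$ is diagonal. Conversely, a diagonal set with interval slices is a strip between two parallel lines of positive slope, which is of this form with $\mu=0$.

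For necessity, assume $\H\chi_S=\chi_S$. Then $\widehat{\chi_S}$ vanishes on $Q_{1,3}$, and since $\sup_y|S_y|<\infty$, Lemma \ref{Lslice} and Proposition \ref{Pconst} apply. They give $|S_y|=a$ for a.e.\ $y$, with $a>0$ since $S$ is not null. Write $S_y=(c(y),c(y)+a)$ up to null sets, with $c$ measurable. Then
$$F(t,y)=e^{-itc(y)}\,\frac{1-e^{-ita}}{it}.$$
By Lemma \ref{Lslice}, for each $t>0$ this is the boundary value of a bounded analytic function on $\C_-$. For $t\notin 2\pi\Z/a$ the scalar factor is nonzero, so $u_t(y):=e^{-itc(y)}$ is the boundary value of some $U_t\in H^\infty(\C_-)$ with unimodular boundary values. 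Thus $U_t$ is inner in $\C_-$, and by continuity in $t$ (weak-star closedness) this holds for every $t>0$. The family satisfies $u_tu_s=u_{t+s}$, so $\{U_t\}_{t>0}$ is a multiplicative one-parameter semigroup of inner functions.

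The key step, and the main obstacle, is to show that such a semigroup has the form $U_t=e^{-it\phi}$ with $\phi$ Herglotz. I would first use the route already prepared: Proposition \ref{Pbary}, applied with the extra hypothesis that $\int_{S_y}|x|\,dx=a|c(y)|+a^2/2$ is locally bounded. That proposition yields $m_1(y)=ac(y)+a^2/2$ as the boundary value of a real Nevanlinna function with purely singular measure. Its analytic continuation to $\C_+$ is then a Herglotz function $\phi_0$ (up to sign conventions), and $c=\phi_0/a-a/2$ a.e.

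Local boundedness of $c$ is not among the hypotheses, so I would remove it directly. Take $G_t=(1-U_t)/(it)$. It maps $\C_-$ into $\{\Im\le0\}$, since $\operatorname{Re}(1-U_t)\ge0$. Using $U_{t}=U_{t/n}^n$, I would take $\phi=\lim_{t\to0}G_t$ in the normal family sense. Finiteness of the limit follows because $\log U_t$ is well defined ($U_t$ has no zeros, being an $n$-th power for every $n$). Hence $U_t=e^{-it\Phi}$ with $\Phi$ analytic and $\Im\Phi\le0$ on $\C_-$, and $\Phi$ is linear in $t$ by the semigroup law. This representation is exactly the one in the remark on $F$ after Theorem \ref{Tbent}.

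Since $\Phi$ has real boundary values a.e., its representing measure is purely singular. It then remains to identify $\beta>0$. Here I would invoke the symmetric statement for vertical slices: these have finite measure, and the computation of Theorem \ref{Tbent} gives $|S^x|=a/\beta$. A zero linear term ($\beta=0$) would give vertical slices of infinite measure, because the Boole-type identity $|\phi^{-1}(A)|=|A|/\beta$ degenerates. Thus $\beta>0$, and $c=\phi$ up to an additive constant, which is absorbed into $\gamma$.

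Finally, the statement about diagonal sets: $\mu=0$ makes $\phi$ affine, giving a diagonal strip. Conversely, a diagonal set has affine $\phi$, and the uniqueness of the Herglotz representation forces $\mu=0$.
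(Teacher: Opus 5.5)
Apart from the determination of $\beta$, your necessity argument follows the paper's proof. The ingredients match: constancy of $|S_y|$ by Proposition \ref{Pconst}, the factorization of $F(t,y)$, inner functions $U_t$ for $t\notin\frac{2\pi}{a}\Z$ extended to a zero-free semigroup, a logarithm $U_t=e^{-it\Phi}$ with $\Phi:\C_-\to\overline{\C_-}$, and real boundary values forcing a purely singular measure. Three small points:
\begin{itemize}
\item The detour through Proposition \ref{Pbary} is not needed; the paper takes the logarithm directly.
\item Your generator limit of $G_t$ is fine once you note that $t\mapsto U_t(w)$ is a continuous homomorphism into $\C\setminus\{0\}$ with $U_t(w)\to 1$ as $t\to 0^+$.
\item Comparing $e^{-it\Phi}$ with $e^{-itc}$ for two rationally independent values of $t$ gives $c=\Phi$ a.e.\ exactly, with no additive constant.
\end{itemize}

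The step that fails as written is $\beta>0$. You use $|S^x|=a/\beta$ from Theorem \ref{Tbent} and argue that for $\beta=0$ the identity $|\phi^{-1}(A)|=|A|/\beta$ ``degenerates''. That identity is established only for $\beta>0$: it comes from the partial-fraction coefficients summing to $1/\beta$. So using it to exclude $\beta=0$ is circular. Moreover, for $\beta=0$ it is false that preimages of intervals always have infinite measure. For example, $\phi(y)=-1/y$ is singular Herglotz with $\mu=\delta_0$ and $\beta=0$, yet $\phi^{-1}((1,2))=(-1,-\tfrac12)$.

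What you actually need is that for $\beta=0$ the push-forward of Lebesgue measure under $\phi$ is not locally finite. Then \emph{some} window $(x-a,x)$ has a preimage of infinite measure, i.e.\ some vertical slice $\{y:\ 0<x-\phi(y)<a\}$ is infinite, contradicting the hypothesis. This is the fact the paper uses, though it only sketches it. The mechanism is as follows.
\begin{itemize}
\item Disintegrate Lebesgue measure into the measures $\nu_s$ of the Herglotz functions $-1/(\phi-s)$. Since $\mu$ is singular, $\phi$ is real a.e., so $\nu_s$ is singular and carried by $\phi^{-1}(s)$.
\item This gives $|\phi^{-1}(A)|=\int_A\nu_s(\R)\,ds$, with $\nu_s(\R)=\lim_{v\to\infty}v\,\Im\phi(iv)/|\phi(iv)-s|^2$ for a.e.\ $s$.
\item For $\beta>0$ this limit equals $1/\beta$, which is Boole's identity.
\item For $\beta=0$ one must use the asymptotics of $\phi(iv)$ to show that $s\mapsto\nu_s(\R)$ is not locally integrable. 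For instance, if $\mu$ is finite then $\phi(iv)\to\gamma_\infty\in\R$ and $\nu_s(\R)=\mu(\R)/(s-\gamma_\infty)^2$. For $\phi=-1/y$ this gives exactly the vertical slices with $x\in[0,a]$ infinite and all others finite.
\item The case $\mu=0$, $\beta=0$ is a vertical strip and is trivial.
\end{itemize}
Without this argument your proof of necessity does not rule out $\beta=0$.
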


	\begin{proof}[Proof (sketch)]
		Sufficiency is Theorem \ref{Tbent}. For necessity: by Proposition \ref{Pconst} the slices are intervals $(\alpha(y),\alpha(y)+a)$ of constant length, so
		$$F(t,y)=e^{-it\alpha(y)}\,\frac{1-e^{-ita}}{it}.$$
		For every $t>0$ outside the discrete set $\frac{2\pi}a\Z$, Lemma \ref{Lslice} shows that the unimodular function $e^{-it\alpha(\cdot)}$ is the boundary value of a function $\Theta_t\in H^\infty(\C_-)$, i.e.\ an inner function of $\C_-$. Boundary values determine inner functions, so $\Theta_t\Theta_{t'}=\Theta_{t+t'}$ whenever defined, and $t\mapsto\Theta_t$ is weak-star continuous; hence $(\Theta_t)_{t>0}$ extends to a continuous semigroup of inner functions. A semigroup element has no zeros (any zero of $\Theta_t=\Theta_{t/n}^n$ would have order divisible by every $n$), so, $\C_-$ being simply connected, $\Theta_t=e^{-it\Phi}$ for a single analytic $\Phi:\C_-\to\overline{\C_-}$. Unimodularity of the boundary values forces $\Im\Phi=0$ a.e.\ on $\R$, i.e.\ $\Phi$ is a real Nevanlinna function with purely singular measure, and its boundary values equal $\alpha$ a.e. Finally, $\beta>0$: the vertical cross-sections are the sets $\Phi^{-1}(x-(0,a))$, and for $\beta=0$ the push-forward of Lebesgue measure under a real singular Nevanlinna function is never locally finite (this follows from the disintegration of Lebesgue measure into the Clark measures of $\Phi$ together with the asymptotics of $\Phi(iv)$ as $v\to\infty$; we omit the details), so that some vertical cross-section would have infinite measure.
	\end{proof}

	\begin{remark}
		The same conclusion holds if the slices are only assumed to be translates $S_y=\alpha(y)+E$ of one fixed set $E$ of finite measure. Indeed, then $F(t,y)=e^{-it\alpha(y)}\widehat{\chi_E}(t)$, and since $\widehat{\chi_E}$ is continuous with $\widehat{\chi_E}(0)>0$, the function $e^{-it\alpha(\cdot)}$ is an inner boundary value for all small $t>0$, hence, by multiplicativity, for all $t>0$, and the semigroup argument applies verbatim.
	\end{remark}

	\subsection{Positive density: the three-family example, fans and cones}

	The examples above have cross-sections of finite measure. At positive density the mean becomes an obstruction: the symmetric principal value of $\int c\,(x-t)^{-1}dt$ vanishes for every constant $c$, so $\H$ annihilates means slice-wise, and no set of positive density satisfies $\H\chi_S=\chi_S$ exactly. The correct normalization is
	$$\H(2\chi_S-1)=2\chi_S-1 .$$

	The basic computation is the following: for $m>0$ and a $1$-periodic $f\in L^2_{loc}(\R)$ with $\int_0^1f=0$, expanding $f(y-mx)=\sum_{p\neq 0}c_pe^{2\pi ip(y-mx)}$, the wave $e^{2\pi ip(y-mx)}$ has frequency $(-2\pi pm,\,2\pi p)$, which lies in $Q_{2,4}$, where the symbol of $\H$ equals $(i\,\tmop{sign}\, p)(-i\,\tmop{sign}\,p)=1$; hence
	\begin{equation}\label{waves}
		\H\left[f(y-mx)\right]=f(y-mx)\qquad(m>0,\ \textstyle\int_0^1 f=0).
	\end{equation}

	\begin{theorem}[the three-family set]\label{Tcarry}
		Let $u=y-x$, $v=y-\frac x2$ and
		$$S=\left\{(x,y):\ \{u\}+\{v\}>1\right\},$$
		where $\{\cdot\}$ denotes the fractional part. Then:

		(i) $\chi_S=\{u\}+\{v\}-\{u+v\}$ (the carry identity);

		(ii) every horizontal and every vertical line meets $S$ in a set of density exactly $1/2$;

		(iii) $\H\chi_S=\chi_S-\frac 12$, equivalently $\H(2\chi_S-1)=2\chi_S-1$.
	\end{theorem}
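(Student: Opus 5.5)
(i) is elementary. For real $u,v$ we have $\{u\}+\{v\}-\{u+v\}=\lfloor u+v\rfloor-\lfloor u\rfloor-\lfloor v\rfloor$. This integer lies in $\{0,1\}$ because $0\le \{u\}+\{v\}<2$. It equals $1$ exactly when $\{u\}+\{v\}\ge 1$, i.e.\ on $S$ up to the null set $\{\{u\}+\{v\}=1\}$, which is contained in countably many lines. Identity (i) is the key structural fact: it writes $\chi_S$ as a sum of three single waves. Here $u+v=2y-\tfrac32x$, so the three waves are
$$\{y-x\},\qquad \{y-\tfrac x2\},\qquad \{2y-\tfrac32 x\}=\{2(y-\tfrac34x)\},$$
each a $1$-periodic function of $y-mx$ with slope $m\in\{1,\tfrac12,\tfrac34\}$, all positive. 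The sign of the slopes is what will make each wave lie spectrally in $Q_{2,4}$.

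For (iii) I would write $\{s\}=\tfrac12+g(s)$, where $g(s)=\{s\}-\tfrac12$ is $1$-periodic with mean zero and Fourier series $g(s)=-\sum_{p\ne0}\frac{e^{2\pi ips}}{2\pi ip}$. Then (i) becomes
$$\chi_S-\tfrac12=g(y-x)+g(y-\tfrac x2)-g(2y-\tfrac32x),$$
since the constants give $\tfrac12+\tfrac12-\tfrac12$. The last term is $h(y-\tfrac34x)$ with $h(s)=g(2s)$, which is again $1$-periodic with mean zero. Identity \eqref{waves} applies to each term and gives $\H(\chi_S-\tfrac12)=\chi_S-\tfrac12$. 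The paper's normalization is that $\H$ annihilates constants, since the atom at the origin is exempt, as in Remark \ref{Rbmo}. With that convention this reads $\H\chi_S=\chi_S-\tfrac12$, and multiplying by $2$ gives $\H(2\chi_S-1)=2\chi_S-1$.

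The main point needing care is the meaning of $\H$ on these $L^\infty$ functions. Formally, every frequency $(-2\pi pm,2\pi p)$ with $p\neq0$ lies in the open quadrants $Q_{2,4}$, so $\widehat{\chi_S}$ is supported in $Q_{2,4}\cup\{0\}$; this gives invariance in the spectral sense defined above. To make the slice-wise statement rigorous as well, I would fix $y$ and note that $x\mapsto g(y-mx)$ is periodic with mean zero. The symmetric principal value $\HH_1$ of such a function is computed termwise on its Fourier series; this is the conjugate function, and convergence holds in $L^2$ of a period. One obtains $\HH_1 e^{-2\pi ipmx}=i\,\tmop{sign}(p)\,e^{-2\pi ipmx}$, and similarly $\HH_2$ acts in $y$ by $-i\,\tmop{sign}(p)$; the product of these multipliers is $1$. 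Because the three terms are handled one at a time and are linear, no joint periodicity is needed.

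(ii) follows from (i) by averaging. On a horizontal line, fixed $y$, each of $\{y-x\}$, $\{y-x/2\}$, $\{2y-\tfrac32x\}$ is periodic in $x$ with mean $\tfrac12$. The average of $\chi_S$ over $|x|<R$ therefore tends to $\tfrac12+\tfrac12-\tfrac12=\tfrac12$. For vertical lines the argument is the same in $y$, since every wave depends nontrivially on $y$.
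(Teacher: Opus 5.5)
Your proposal is correct and follows the paper's proof essentially step for step. The paper also uses the carry identity for (i), writes $\chi_S-\frac12$ as three mean-zero sawtooth waves of positive slopes $1,\frac12,\frac34$ to which \eqref{waves} applies (with the constant annihilated) for (iii), and gets (ii) from the means of the periodic terms on horizontal and vertical lines. Your additions are consistent with the paper: you note the null set $\{\{u\}+\{v\}=1\}$ in (i), and you justify the slice-wise transform through the conjugate function, which matches the paper's remark in Remark \ref{Rbmo} that symmetric truncations converge for these periodic sets.
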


	\begin{proof}
		(i) is the statement that $\{u\}+\{v\}-\{u+v\}\in\{0,1\}$ equals $1$ exactly when $\{u\}+\{v\}>1$, i.e.\ when a carry occurs in the addition $u+v$. For (iii), each of the three sawtooths $\{u\}-\frac12$, $\{v\}-\frac 12$, $\{u+v\}-\frac12$ is a mean-zero periodic wave constant along lines of slope $1$, $\frac 12$ and $\frac 34$ respectively (note $u+v=2y-\frac 32 x$), so \eqref{waves} applies to each; the constant $\frac 12$ is annihilated. (ii) follows from (i): $\chi_S-\frac12=(\{u\}-\frac12)+(\{v\}-\frac12)-(\{u+v\}-\frac12)$, and along any horizontal or vertical line each of the three terms is a mean-zero periodic function of the parameter (with periods $1$, $2$, $\frac23$ along horizontal lines and $1$, $1$, $\frac12$ along vertical ones), so the mean of $\chi_S$ along the line is $\frac12$.
	\end{proof}

	\begin{remark}[geometry]\label{Rgeom}
		$S$ is bounded by the three families of parallel lines
		$$u\in\Z,\qquad v\in\Z,\qquad u+v\in\Z,$$
		that is, $y=x+n$, $y=\frac x2+n$ and $y=\frac 34x+\frac n2$, $n\in\Z$. The cells of the first two families are parallelograms whose four vertices lie at four consecutive integer heights; the third family passes through each cell as its long diagonal and divides it into two halves of area $1$. The set $S$ is the union of the upper halves. The complement of $S$ is a set of the same type, by $\{-t\}=1-\{t\}$. On the Fourier side, all nonzero frequencies of $2\chi_S-1$ lie on the three lines $\xi_2/\xi_1\in\{-1,-2,-\frac 43\}$ in $Q_{2,4}$: the example is a sum of three ``diagonal'' generalized eigenvectors which happens to be two-valued. See Figure \ref{fig:carry}.
	\end{remark}

	The three-family set has a degenerate relative in which the three families collapse to lines through one point. Write $H=\chi_{[0,\infty)}$ for the Heaviside function. We say that affine functions $a_dy-b_dx+c_d$ are \emph{oriented alike} if the coefficients $a_d$ all have the same sign.

	\begin{proposition}[fans]\label{Pfan}
		Let $k$ be odd and let $\ell_1,\dots,\ell_k$ be affine functions on $\R^2$, oriented alike, whose zero lines pass through a common point $p$ and have distinct positive slopes, labeled in increasing order of slope. Put
		$$G=\sum_{d=1}^k(-1)^{d-1}\tmop{sign}\,\ell_d,\qquad F=\{G=1\}.$$
		Then:

		(i) $G$ is $\{\pm1\}$-valued a.e., so $G=2\chi_F-1$, and $F$ is the union of $k$ of the $2k$ sectors into which the lines cut the plane, taken alternately; replacing every $\ell_d$ by $-\ell_d$ replaces $F$ by its complement. Conversely, every union of alternate sectors of $k$ concurrent lines of positive slopes is of this form, and $k$ is necessarily odd for such a union to be invariant.

		(ii) $2\chi_F-1$ is odd with respect to $p$ and homogeneous of degree $0$ about $p$, and $\H(2\chi_F-1)=2\chi_F-1$ in the spectral sense; the spectrum consists of the $k$ lines through the origin normal to the boundary lines.

		(iii) For $k=3$, and $\ell_3=\lambda_1\ell_1+\lambda_2\ell_2$ with $\lambda_1,\lambda_2>0$ (which is automatic after relabelling, the middle slope being a weighted mean of the other two), $F=\{H(\ell_1)+H(\ell_2)-H(\ell_3)=1\}$. At every vertex $p$ of its arrangement the carry set $S$ of Theorem \ref{Tcarry} blows up to such a fan: as $\varepsilon\to0^+$, $\chi_S(p+\varepsilon(z-p))\to\chi_{F}(z)$ a.e., where $F$ is built on $\ell_1=u(p)-u$, $\ell_2=v(p)-v$, $\ell_3=\ell_1+\ell_2$.
	\end{proposition}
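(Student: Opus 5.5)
Translating $p$ to the origin, we may take each $\ell_d=a_d(y-m_dx)$ with $a_d>0$ (oriented alike) and slopes $0<m_1<\dots<m_k$, so that $\tmop{sign}\,\ell_d=\tmop{sign}(y-m_dx)$. For (i) we count along a ray from $p$ in direction $\theta$. Each term $\tmop{sign}(y-m_dx)$ is constant on sectors. Crossing the line of slope $m_d$ changes only the $d$-th term, by $\pm2$. On the ray pointing straight up (the positive $y$-axis) all terms equal $+1$, and the alternating sum of $k$ ones with $k$ odd is $1$. Rotating the ray clockwise from the vertical, it first crosses the line of largest slope $m_k$, then $m_{k-1}$, and so on. The term with index $d$ carries the sign $(-1)^{d-1}$ and flips from $+1$ to $-1$, so $G$ changes by $-2(-1)^{d-1}$. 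Because consecutive indices alternate in sign, $G$ alternates between $1$ and $-1$, starting from $1$ (first jump $-2$, then $+2$, and so on). After all $k$ crossings $G=-1$, consistent with oddness.

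This gives the $\{\pm1\}$-valuedness and shows that $F$ consists of alternate sectors. Replacing $\ell_d\mapsto-\ell_d$ negates $G$, so $F$ is replaced by its complement. Conversely, a union of alternate sectors of $k$ concurrent lines determines $G$ up to a global sign, and that sign is realized by the flip $\ell\mapsto-\ell$. Going once around, the indicator must return to its starting value after $2k$ crossings, which is automatic. The oddness of $k$ comes from invariance: the function $2\chi_F-1$ must be odd about $p$, since otherwise the spectrum carries a non-removable even homogeneous part. More concretely, if $k$ is even then antipodal sectors have the same value. The Fourier transform of an even degree-$0$ homogeneous piecewise constant function contains a $\log$-type or $\delta$-type term that is not carried by the lines of $Q_{2,4}$; equivalently, the jump decomposition forces a nonzero constant plus $\sum c_d\,\tmop{sign}$ with non-alternating coefficients. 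I expect this converse parity argument to need care. I would reduce it to writing any alternating-sector function as $c+\sum_d c_d\,\tmop{sign}(y-m_dx)$, with $c_d=\pm1$ forced by the jumps, and noting that this representation gives $c=0$ exactly when $k$ is odd.

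For (ii), oddness and homogeneity are immediate from $\tmop{sign}\,\ell_d(-z)=-\tmop{sign}\,\ell_d(z)$ after centering. The spectral invariance follows from Remark \ref{Rbmo}: each $\tmop{sign}(y-m_dx)$ with $m_d>0$ has Fourier transform carried by the line $\R\cdot(-m_d,1)$, which lies in $Q_{2,4}\cup\{0\}$. Concretely, the transform is $\mathrm{p.v.}\,1/\tau$ along that line, and it is fixed modulo $a(x)+b(y)$, as computed there. Linearity then gives the result for $G$, and the spectrum is the union of the $k$ normal lines. Lemma \ref{Lcut} then upgrades this to the cutoff-normalized statement.

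For (iii), with $k=3$ the middle slope $m_2$ satisfies $y-m_2x=\lambda_1(y-m_1x)+\lambda_3(y-m_3x)$ with $\lambda_i>0$. After relabelling, the middle line plays the role of $\ell_3$, and this also fixes the positivity of the $\lambda$'s. With $\tmop{sign}=2H-1$ we get $G=2(H(\ell_1)+H(\ell_2)-H(\ell_3))-1$, so $G=1$ exactly when the bracket equals $1$. The bracket takes values in $\{0,1\}$ because $\ell_1,\ell_2>0$ forces $\ell_3>0$, and $\ell_3>0$ forces $\ell_1>0$ or $\ell_2>0$.

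For the blow-up, fix a vertex $p$ of the carry arrangement, so that $u(p),v(p)\in\Z$. Near $p$ we have $\{u\}=H(u-u(p))\,(u-u(p))+(1-H(u-u(p)))\,(1+u-u(p))$, and similarly for $v$ and $u+v$. Hence, near $p$,
$$\chi_S=\{u\}+\{v\}-\{u+v\}=(1-H(-\ell_1'))+(1-H(-\ell_2'))-(1-H(-\ell_3'))+O(|z-p|),$$
where the $\ell'$ are the linear parts and the linear terms cancel because $u+v$ is additive. Rescaling by $\varepsilon\to0$ kills the $O(\varepsilon)$ term, and the $0/1$-valued limit is $H(\ell_1)+H(\ell_2)-H(\ell_3)$ a.e. with $\ell_1=u(p)-u$, $\ell_2=v(p)-v$, using $1-H(-s)=H(s)$ off $s=0$. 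The main obstacle I expect is the sign bookkeeping here, where the orientation flip must match the statement's $\ell_i$. The converse parity claim in (i) is also delicate.
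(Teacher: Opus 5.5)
Most of your proof is correct and follows the paper closely. In (i), the half-turn count from the vertical ray combined with oddness of $G$ is the paper's walk around $p$. Part (ii) is the same one-wave Fourier computation. In (iii), your local formula for $\{u\}$ is the paper's $m_1+1-H(m_1)$; the linear terms cancel exactly, so your $O(|z-p|)$ is in fact zero.

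The gap is the last assertion of (i): an alternate union of sectors can be invariant only for odd $k$. Your planned reduction cannot work, for two reasons.
\begin{itemize}
\item The representation does not exist. For even $k$ the alternate coloring is \emph{even} about $p$, since sectors $j$ and $j+k$ get the same colour. Hence the jump across $\{\ell_d=0\}$, measured in the direction of $\nabla\ell_d$, has opposite signs on the two rays of that line. A function $c+\sum_d c_d\tmop{sign}\,\ell_d$ has the same jump $2c_d$ along the whole line. For $k=2$ the coloring is $-\tmop{sign}(y-x)\,\tmop{sign}(y-2x)$, a product rather than a sum.
\item Even if such a representation existed, a constant $c$ contributes only $c\,\delta_0$ to the spectrum. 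The spectral sense explicitly exempts the origin, so $c\neq 0$ would not contradict invariance.
\end{itemize}
Your first sentence, ``otherwise the spectrum carries a non-removable even homogeneous part'', simply assumes condition (a) of Theorem \ref{Tcone}. Also, the obstruction is not a $\log$- or $\delta$-type term.

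The missing idea is the non-locality of the even part. Write $2\chi_F-1=\Omega(\theta)$ about $p$.
\begin{itemize}
\item The even part $\Omega_e$ contributes to the Fourier transform $|\xi|^{-2}$ times the rotation by $\pi/2$ of $|D|\Omega_e=\mathcal H(\Omega_e')$. Since $Q_{2,4}\cup\{0\}$ is symmetric, invariance forces $|D|\Omega_e$ to vanish near the closed arcs of non-positive slope.
\item For piecewise constant $\Omega_e$ with jumps $c_j$ at directions $\theta_j$ of positive slope, $|D|\Omega_e$ is, up to normalization, $\sum_j c_j\cot\frac{\varphi-\theta_j}{2}$. This is a rational function of $e^{i\varphi}$ with simple poles at the $e^{i\theta_j}$. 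It cannot vanish on an arc unless all $c_j=0$.
\item So invariance forces $\Omega_e$ to be constant. For even $k$, however, $\Omega_e=\Omega$ is nonconstant, which gives the contradiction.
\end{itemize}
This is what the paper imports from Theorem \ref{Tcone}, where Step~5 handles general measurable $\Omega$ by analytic continuation. Appealing to Lemma \ref{Lparity} instead would first require a wave decomposition with a fixed jump orientation along each whole line. That is exactly the property the even-$k$ coloring lacks, so you cannot take it for granted.
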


	\begin{proof}
		(i) Since the functionals are oriented alike and the slopes are positive, the gradients $\nabla\ell_d=(-b_d,a_d)$ lie in one open quadrant, in the angular order of the slopes. Walking around $p$ one therefore crosses the $k$ lines first in the increasing direction of their functionals, in the order $d=1,\dots,k$, and then in the decreasing direction, in the same order. The jumps of $G$ along this walk are $2(-1)^{d-1}$ at the increasing crossings and $-2(-1)^{d-1}$ at the decreasing ones: the cyclic sequence of jumps is $+2,-2,\dots,+2,-2,+2,\dots,-2$, alternating, precisely because $k$ is odd. In the sector where all $\ell_d<0$ one has $G=-\sum(-1)^{d-1}=-1$, so $G$ alternates between $-1$ and $+1$. For even $k$ the two junctions of the walk carry consecutive jumps of the same sign, and $G$ takes three values. Given an alternate union $F$ of sectors of $k$ concurrent positive-slope lines, define $G$ as above with the labelling by slopes; $G$ and $2\chi_F-1$ are both odd about $p$ (an alternate coloring of $2k$ sectors is antipodally antisymmetric) and have the same jumps, hence coincide. Invariance of such an $F$ forces $k$ odd by Lemma \ref{Lparity} below, or by Theorem \ref{Tcone}. The statement about $-\ell_d$ is clear.

		(ii) Oddness and homogeneity follow from $\ell_d(p+\lambda(z-p))=\lambda\ell_d(z)$. In the coordinate $s=\ell_d(z)$ the function $\tmop{sign}\,\ell_d$ is $\tmop{sign}(s)\otimes 1$, so its Fourier transform is a phase factor (accounting for $p$) times $\pv\,1/\sigma$ carried by the line $\R\nabla\ell_d$ through the origin, which lies in $Q_{2,4}\cup\{0\}$ because $a_db_d>0$. Hence the Fourier transform of $2\chi_F-1$ is supported in $Q_{2,4}\cup\{0\}$, which is the spectral invariance. (By Remark \ref{Rbmo} this is the only possible meaning of the identity slice-wise, up to functions of one variable, and by Lemma \ref{Lcut} it holds exactly after the cutoff normalization: $\H[(2\chi_F-1)\psi_R(\cdot-p)]\to2\chi_F-1$ uniformly on compact sets.)

		(iii) With $\tmop{sign}\,\ell=2H(\ell)-1$ a.e., $G=1$ reads $H(\ell_1)-H(\ell_2)+H(\ell_3)=1$ when the middle slope is $\ell_2$; relabeling the middle functional as $\ell_3$ gives the stated form, which is also verified directly: if $\ell_1,\ell_2\ge0$ then $\ell_3\ge0$ and the sum is $1$; if both are negative it is $0$; if they have opposite signs it is $1-H(\ell_3)$. For the blow-up, near a vertex $p$ the carry set is bounded by the three lines through $p$ only. Put $m_1=u-u(p)$, $m_2=v-v(p)$; since $u(p),v(p)\in\Z$, for $m_1,m_2$ small one has $\{u\}=m_1+1-H(m_1)$, $\{v\}=m_2+1-H(m_2)$ and $\{u+v\}=m_1+m_2+1-H(m_1+m_2)$, so that
		\begin{align*}
		\{u\}+\{v\}-\{u+v\}&=1-H(m_1)-H(m_2)+H(m_1+m_2)\\
		&=H(-m_1)+H(-m_2)-H(-m_1-m_2)
		\end{align*}
		a.e., which is the fan built on $\ell_1=-m_1$, $\ell_2=-m_2$ (oriented alike, with coefficient $-1$ of $y$).
	\end{proof}

	\begin{figure}[htb]
		\centering
		\includegraphics[width=.82\textwidth]{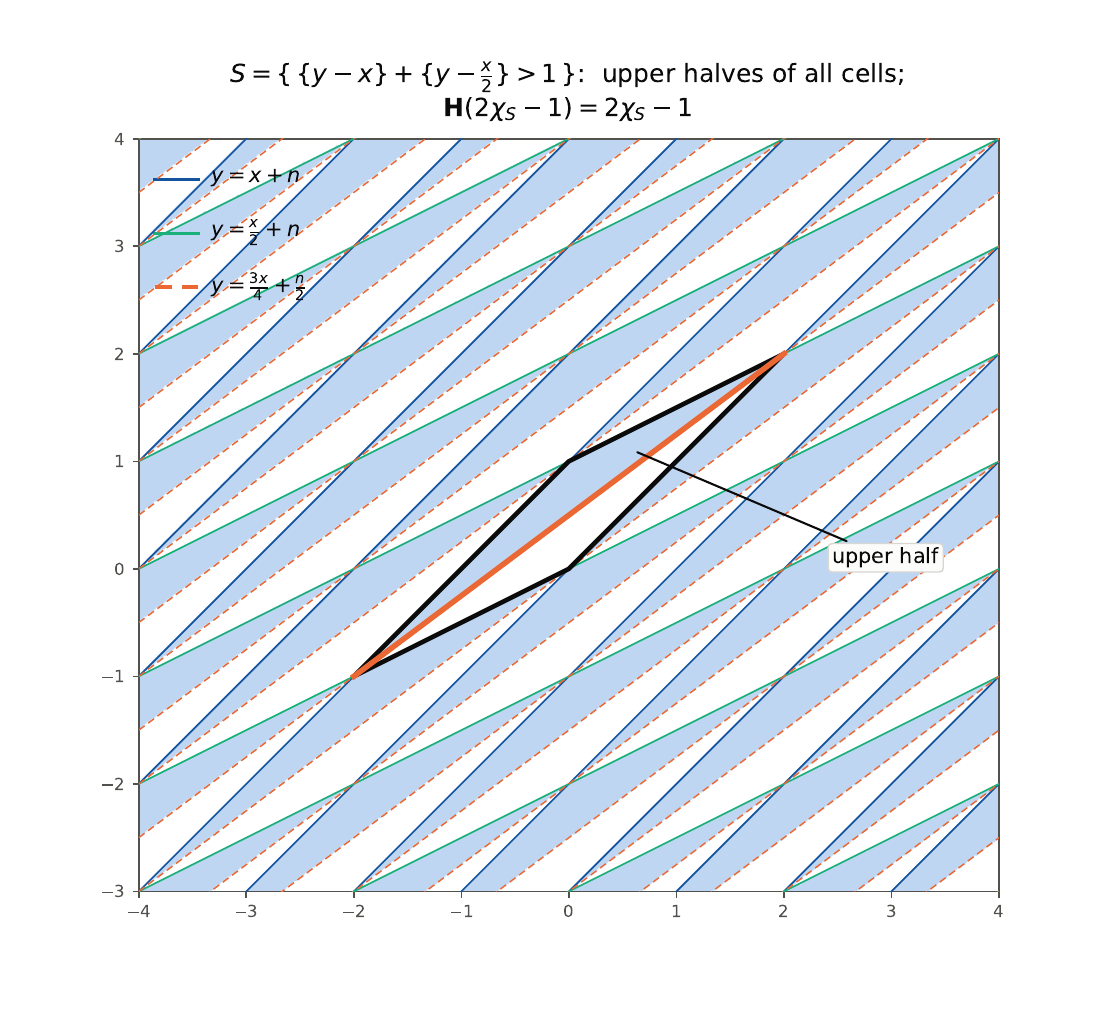}
		\caption{The three-family set of Theorem \ref{Tcarry}. The cells of the families $y=x+n$ and $y=\frac x2+n$ are parallelograms; the family $y=\frac34x+\frac n2$ passes through each cell as its long diagonal, and $S$ (shaded) is the union of the upper halves. One cell and its diagonal are highlighted.}
		\label{fig:carry}
	\end{figure}

	The fans are the simplest invariant sets with a nontrivial arrangement: $k$ concurrent lines, $2k$ alternating sectors, a single point of odd multiplicity (Figure \ref{fig:fans}). Their profiles are Heaviside steps rather than sawtooths; we return to this distinction in Proposition \ref{Pchar}. Note that for $k=3$ the slope of the sum family is intermediate only when $\ell_1,\ell_2$ are oriented alike: a fan built on $y-x$ and $4x-2y$, with sum $3x-y$, has slopes $1,2,3$, but it coincides with the fan built on the like-oriented pair $y-x$, $y-3x$, whose sum $2y-4x$ has the middle slope. The same remark applies to the three-family set: its families can be relabeled so that the resonant one is the sum of two like-oriented functionals, and then its slope is the intermediate one.

	\begin{figure}[htb]
		\centering
		\includegraphics[width=\textwidth]{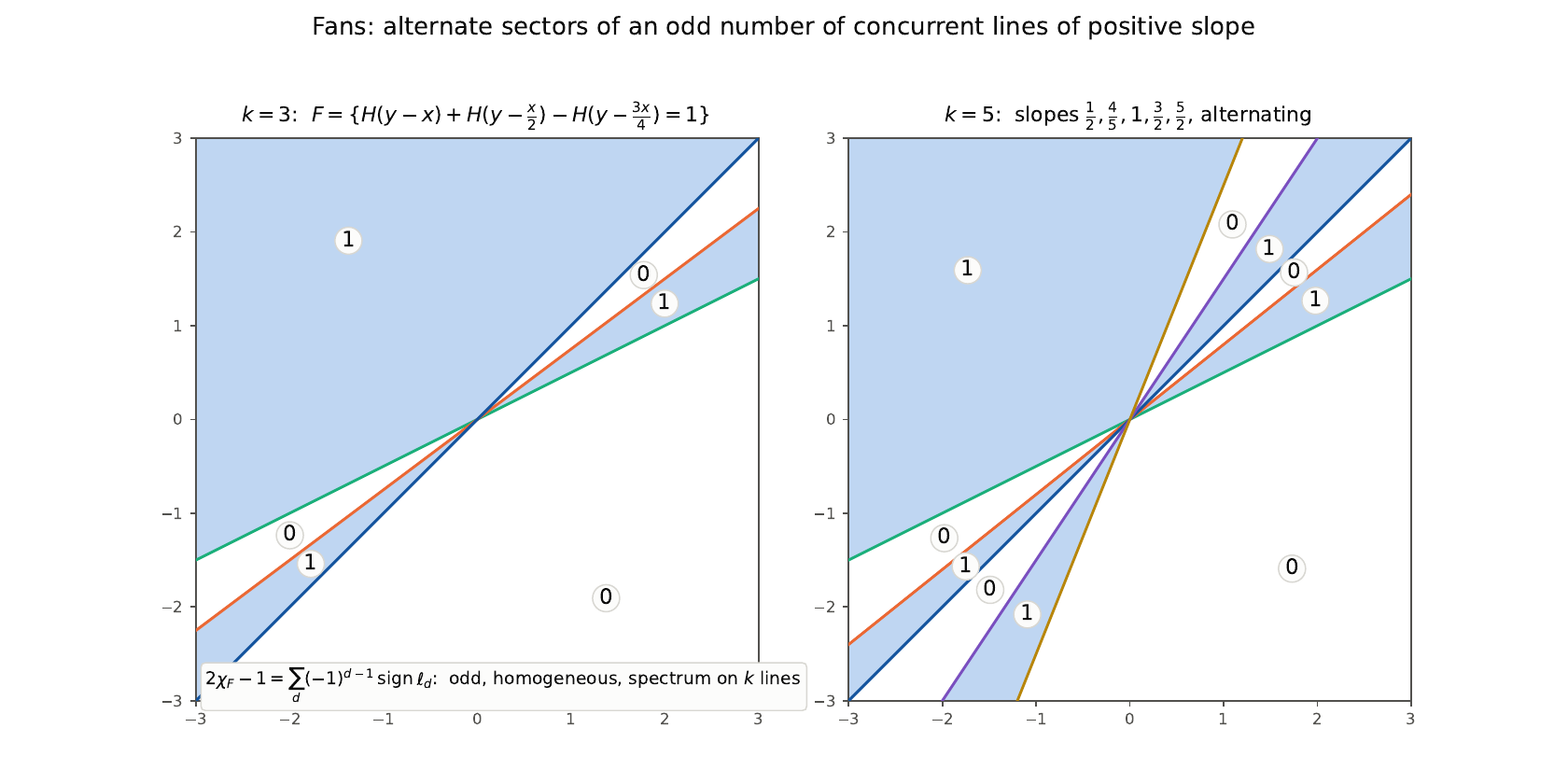}
		\caption{Two fans of Proposition \ref{Pfan}: three and five concurrent lines of positive slope, with the alternate sectors forming $F$ (shaded). The three-line fan is the blow-up of the three-family set at any vertex of its arrangement; the five-line fan shows that, with step profiles, five directions can occur.}
		\label{fig:fans}
	\end{figure}

	Fans are the piecewise constant members of a much larger family, which can be described completely. Identify directions with angles $\theta\in\T=\R/2\pi\Z$ and call $\theta$ \emph{of positive slope} if $\cos\theta\sin\theta>0$.

	\begin{theorem}[conical invariant sets]\label{Tcone}
		Let $F=\{p+re^{i\theta}:\ r>0,\ \theta\in A\}$ be a cone with vertex $p$ over a measurable set $A\subset\T$ which is neither null nor conull, and let $\Omega=2\chi_A-1$. Then $\H(2\chi_F-1)=2\chi_F-1$ in the spectral sense if and only if

		(a) $\Omega$ is odd: $\Omega(\theta+\pi)=-\Omega(\theta)$ a.e., and

		(b) $\Omega$ is locally constant on a neighborhood of the two closed arcs of directions of non-positive slope.

		\noindent In that case the Fourier transform of $2\chi_F-1$ is, up to a phase factor and a constant, the homogeneous distribution $|\xi|^{-2}\,\Omega'(\varphi-\pi/2)$, $\xi=|\xi|e^{i\varphi}$, carried by the lines through the origin normal to the rays on which $\Omega$ is not locally constant, and $\H[(2\chi_F-1)\psi_R(\cdot-p)]\to2\chi_F-1$ uniformly on compact sets by Lemma \ref{Lcut}. Thus $A$ is determined by its trace on one open arc of positive slope, where it is an arbitrary measurable set (subject to (b) near the endpoints), and $F$ is a fan exactly when this trace is a finite union of intervals.
	\end{theorem}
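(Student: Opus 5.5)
We work on the Fourier side throughout, taking $p=0$ (translation only multiplies $\widehat{\chi_F}$ by a phase $e^{-i\langle p,\xi\rangle}$, which does not change supports). Put $g=2\chi_F-1=\Omega(\theta)$, a bounded function homogeneous of degree $0$. Its Fourier transform is a homogeneous distribution of degree $-2$. By the spectral definition, the claim is that $\hat g$ vanishes on the open quadrants $Q_{1,3}$ and on the punctured coordinate axes exactly when (a) and (b) hold.

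The first step is to split $\Omega$ into even and odd parts, $\Omega=\Omega_e+\Omega_o$. The even part produces an even, real homogeneous distribution. We will show that a nonzero even homogeneous $L^\infty$ function of degree $0$ always has Fourier transform charging $Q_{1,3}$ or the axes, unless it is constant. Constants are allowed because they give the atom at the origin.

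The cleanest route is to compute $\hat g$ explicitly in polar coordinates. For the odd part we use the classical formula for degree-$0$ homogeneous functions. Writing $\Omega$ in a Fourier series $\sum c_n e^{in\theta}$, the transform of $e^{in\theta}$ is $c\,i^{-|n|}\,\frac{2}{|n|}\,e^{in\varphi}|\xi|^{-2}$ for $n\neq0$, and a delta at the origin for $n=0$. This is the standard Bochner/Hecke identity for $\int_0^\infty$ of a Bessel function, up to normalisation.

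For odd $n$ the factor $i^{-|n|}\frac{1}{|n|}$ acts as a Hilbert-transform-type integration in $\theta$. Summing, the odd part of $\Omega$ has transform $|\xi|^{-2}\,\Omega'(\varphi-\pi/2)$ up to a constant. Concretely, $\frac{i^{-|n|}}{|n|}e^{in\varphi}$ equals $\frac{1}{in}e^{in(\varphi-\pi/2)}$ for odd $n$ with the correct sign convention: $i^{-|n|}\operatorname{sign}(n)$ reduces to $-i\,i^{-n}$ when $n$ is odd. Hence the angular profile of $\hat g$ is $\Omega'$ rotated by $\pi/2$, which is the displayed formula.

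For even $n\neq0$, instead, $i^{-|n|}/|n|=(-1)^{n/2}/|n|$ is an even multiplier. The angular profile of the transform of $\Omega_e-c_0$ is then a genuinely nonlocal function, essentially $\sum (-1)^{n/2}c_n|n|^{-1}e^{in\varphi}$. We would show that this cannot vanish on the four open arcs of $Q_{1,3}$ together with the axis directions. This case is the main obstacle.

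Our first attempt at it is as follows. The profile is $\pi$-periodic, so we write it as a function of $2\varphi$. Vanishing on $Q_{1,3}$ and on the axis directions means it is supported, as a function of $2\varphi\in\T$, on one open half-circle. Its Fourier coefficients are $(-1)^{m}c_{2m}/(2|m|)$. Because $\Omega$ is real, $\overline{c_{2m}}=c_{-2m}$, so the coefficients are Hermitian and decay, with $|m|$-weighting, and the profile is real.

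We then combine this with the constraint that $\Omega_e=\Omega-\Omega_o$, while $\Omega$ is $\pm1$-valued. If (a) holds, $\Omega_e\equiv0$ and nothing is to be proved. Conversely, we must show that $\Omega_e\ne$ const forces a nonzero profile on $Q_{1,3}$. If that fails, we fall back on a direct duality test: pair $g$ against $\check\alpha$ for test functions $\alpha$ supported in $Q_1$. Since $\alpha(\xi)+\alpha(-\xi)$ is even, this gives $\int \Omega_e(\theta)\,K_\alpha(\theta)\,d\theta=0$ for an explicit kernel $K_\alpha$ built from the Mellin transform of the radial parts. Varying $\alpha$, we would aim to show that these kernels span a dense set in the even functions orthogonal to constants.

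Given that $\Omega_e$ is constant, it must be $0$, since $A$ is neither null nor conull and oddness then applies. This gives (a).

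With (a) in hand, $\hat g=c\,|\xi|^{-2}\Omega'(\varphi-\pi/2)$. This vanishes on $Q_{1,3}$ and on the axes precisely when $\Omega'$ vanishes on a neighbourhood of the directions $\theta=\varphi-\pi/2$ with $\varphi$ in the closed quadrants $\overline{Q_{1,3}}$. Those $\theta$ are exactly the closed arcs of non-positive slope. Here the neighbourhood is needed for the axes, because the boundary directions must also be excluded.

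This is condition (b). The support statement, the cutoff convergence via Lemma \ref{Lcut} (applicable since $g$ is odd and homogeneous), and the parametrisation of $A$ all follow. The last claim holds because $\Omega'$ is a finite sum of Dirac masses exactly when the trace of $A$ is a finite union of intervals, which is the fan case of Proposition \ref{Pfan}.
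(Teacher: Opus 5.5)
\textbf{The gap: necessity of (a).} Your outline matches the paper's: reduce to $p=0$, split $\Omega=\Omega_o+\Omega_e$, expand in angular harmonics via Bochner--Hecke, read off supports of the resulting homogeneous distributions, and get (b) once (a) is known. But the decisive step, necessity of (a), is missing, and the route you propose for it cannot work. The statement you set out to prove is false. That statement is: a nonconstant even degree-$0$ homogeneous $L^\infty$ function always has spectrum meeting $Q_{1,3}$ or the punctured axes. For a counterexample, take a smooth, $\pi$-periodic, mean-zero $T$ on $\T$ supported in the open arcs of positive slope, for instance two bumps of equal mass and opposite sign inside $(0,\pi/2)$, periodized by $\pi$. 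Put $\Omega_e=|D|^{-1}T$, where $|D|$ is the multiplier $|k|$. Then $\Omega_e$ is smooth, even and nonconstant, and the Fourier transform of $\Omega_e(\theta)$ is a multiple of $T(\varphi-\pi/2)\,|\xi|^{-2}$, which is carried by $Q_{2,4}$. For the same reason your kernels $K_\alpha$ are not dense in the even mean-zero functions: their annihilator is infinite dimensional. So no linear or duality argument can succeed. You must use, nonlinearly, the fact that $\Omega_e=\frac12(\Omega+\Omega(\cdot+\pi))$ takes only the values $-1,0,1$.

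The paper does this by analytic continuation, in four steps.
\begin{enumerate}
\item $|D|\Omega_e=(\mathcal H\Omega_e)'$ vanishes on the arc $I=(\pi/2,\pi)$, so the conjugate function $\mathcal H\Omega_e$ equals a constant $\kappa$ a.e.\ on $I$.
\item The function $g\in H^2(\mathbb D)$ with boundary values $\Omega_e+i\mathcal H\Omega_e$ has imaginary part equal to $\kappa$ plus the Poisson integral of $\mathcal H\Omega_e-\kappa$ over $\T\setminus I$. This is harmonic across $I$, so $g$ continues holomorphically across $I$.
\item Hence $\Omega_e$ coincides a.e.\ on $I$ with a real-analytic function. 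Being $\{-1,0,1\}$-valued, that function is constant.
\item So $g$ is constant on an arc, hence constant on $\mathbb D$, and $\Omega_e$ is constant. The constant is $0$ because $A$ is neither null nor conull.
\end{enumerate}
Without this, or an equivalent rigidity argument using the discrete range, you have not proved (a).

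\textbf{Error in the Bochner--Hecke formula.} The limit $\alpha\uparrow2$ gives the factor $\Gamma(\frac{|n|}2+1)/\Gamma(\frac{|n|}2)=\frac{|n|}2$, not $\frac2{|n|}$. The transform of $e^{in\theta}$ is therefore $c\,|n|\,i^{-|n|}\,\pv[e^{in\varphi}|\xi|^{-2}]$. With your factor, the odd coefficients become $\frac1{in}e^{in(\varphi-\pi/2)}$, as you yourself write. These sum to an antiderivative of $\Omega_o$, not to $\Omega_o'$. That profile is Lipschitz with slope $\pm1$ a.e., so it can never vanish on an arc, and even the fans would fail to be invariant. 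You can check the normalization on $G=\operatorname{sign}x$: its coefficients of size $1/|n|$ must map to the constant coefficients of $\delta_0-\delta_\pi$. With $|n|$ in place of $1/|n|$, your formula $\Omega_o'(\varphi-\pi/2)$ for the odd part becomes correct. The even part then becomes $(|D|\Omega_e)(\varphi-\pi/2)$, with $|D|=\mathcal H\partial_\varphi$, and this identity is exactly what makes the conjugate-function argument above available.
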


	\begin{proof}
		Throughout, $z=re^{i\theta}$ and $\xi=\rho e^{i\varphi}$ are polar coordinates in the physical and in the frequency plane, $\T=\R/2\pi\Z$,
		$$P=\{\theta:\ \cos\theta\sin\theta>0\},\qquad N=\{\varphi:\ \cos\varphi\sin\varphi<0\}$$
		are the open sets of directions of positive, respectively negative, slope, and $\tau_aT=T(\cdot-a)$ denotes the rotation of a distribution $T$ on $\T$ by the angle $a$, so that $\tmop{supp}(\tau_aT)=\tmop{supp}T+a$; note that $\tau_{\pi/2}$ maps $P$ onto $N$. A translation of $F$ multiplies the Fourier transform by a unimodular character and does not affect its support, so we may take $p=0$; then $G:=2\chi_F-1=\Omega(\theta)$ is homogeneous of degree $0$.

		\emph{Step 1: angular harmonics.} For $k\in\Z\setminus\{0\}$ put $E_k(z)=e^{ik\theta}$ and define a tempered distribution $D_k$ by
		$$\langle D_k,\psi\rangle=\int_0^\infty\psi_k(\rho)\,\frac{d\rho}{\rho},\qquad \psi_k(\rho)=\int_0^{2\pi}e^{ik\varphi}\psi(\rho e^{i\varphi})\,d\varphi\qquad(\psi\in\mathcal S(\R^2)).$$
		The integral converges. Indeed, every $\varphi$-derivative of $\psi(\rho e^{i\varphi})$ is a sum of terms $(\partial^\beta\psi)(\rho e^{i\varphi})$ times a homogeneous polynomial of degree $|\beta|\geq1$ in $\rho\cos\varphi,\rho\sin\varphi$, so integrating by parts $M\geq1$ times in $\varphi$ gives
		\begin{equation}\label{psik}
			|\psi_k(\rho)|\leq C_M(\psi)\,|k|^{-M}\min\{\rho,(1+\rho)^{-2}\},
		\end{equation}
		with $C_M(\psi)$ a Schwartz seminorm of $\psi$. We claim that
		\begin{equation}\label{BH}
			\widehat{E_k}=c\,|k|\,i^{-|k|}\,D_k ,
		\end{equation}
		with a constant $c\neq0$ depending only on the normalization of the Fourier transform. To see this, write $E_k=P_k(z)|z|^{-|k|}$, where $P_k(z)=z^k$ for $k>0$ and $P_k(z)=\bar z^{|k|}$ for $k<0$ is a harmonic homogeneous polynomial of degree $|k|$ with $P_k(\rho e^{i\varphi})=\rho^{|k|}e^{ik\varphi}$. The Bochner--Hecke formula (\cite{SW2}, Chapter IV, Theorem 4.1) states that for $0<\alpha<2$
		$$\mathcal F\big[P_k(z)|z|^{-|k|-2+\alpha}\big]=\gamma_{k,\alpha}\,P_k(\xi)|\xi|^{-|k|-\alpha},\qquad\gamma_{k,\alpha}=c_\alpha\, i^{-|k|}\,\frac{\Gamma\big(\frac{|k|+\alpha}2\big)}{\Gamma\big(\frac{|k|+2-\alpha}2\big)},$$
		where $c_\alpha>0$ depends only on $\alpha$ and on the normalization and is continuous in $\alpha\in(0,2]$. Let $\alpha\uparrow2$. On the left, $r^{\alpha-2}e^{ik\theta}\to e^{ik\theta}$ in $\mathcal S'$ by dominated convergence, since $r^{\alpha-2}\leq r^{-1}+1$ is locally integrable in the plane. On the right, for $\psi\in\mathcal S$,
		$$\big\langle P_k(\xi)|\xi|^{-|k|-\alpha},\psi\big\rangle=\int_0^\infty\rho^{1-\alpha}\psi_k(\rho)\,d\rho\longrightarrow\int_0^\infty\psi_k(\rho)\,\frac{d\rho}\rho=\langle D_k,\psi\rangle,$$
		again by dominated convergence, \eqref{psik} providing the majorant $\rho^{1-\alpha}|\psi_k(\rho)|\leq C\min\{1,(1+\rho)^{-2}\}$ for $1\leq\alpha<2$. Since $\gamma_{k,\alpha}\to c_2\,i^{-|k|}\,\Gamma(\frac{|k|}2+1)/\Gamma(\frac{|k|}2)=c_2\,i^{-|k|}\,\frac{|k|}2$, this proves \eqref{BH} with $c=c_2/2$. (The distribution $D_k$ is what is meant by the principal value $\pv[e^{ik\varphi}|\xi|^{-2}]$: the integral is taken in the angular variable first, and it converges because the angular mean of $e^{ik\varphi}$ vanishes.)

		\emph{Step 2: the angular distribution.} For a distribution $T$ on $\T$ of finite order with $\langle T,1\rangle=0$ define $T\otimes\rho^{-2}\in\mathcal S'(\R^2)$ by
		$$\big\langle T\otimes\rho^{-2},\psi\big\rangle=\int_0^\infty\big\langle T,\psi(\rho e^{i\,\cdot})\big\rangle\,\frac{d\rho}\rho .$$
		The integrand is $O(\rho)$ as $\rho\to0$, because $\psi(\rho e^{i\varphi})=\psi(0)+\rho R(\rho,\varphi)$ with $R$ smooth and $T$ annihilates constants, and it is rapidly decreasing as $\rho\to\infty$, because $\|\psi(\rho e^{i\,\cdot})\|_{C^m(\T)}=O(\rho^{m-M})$ for every $M$; so the integral converges and depends continuously on $\psi$. For $T=e^{ik\varphi}$ one recovers $D_k$, and if $T=\sum_{k\neq0}t_ke^{ik\varphi}$ with $|t_k|\leq C|k|^s$, then by \eqref{psik} the series $\sum_kt_k\psi_k(\rho)$ converges absolutely, with a majorant integrable against $d\rho/\rho$, so that
		\begin{equation}\label{series}
			\big\langle T\otimes\rho^{-2},\psi\big\rangle=\sum_{k\neq0}t_k\langle D_k,\psi\rangle .
		\end{equation}
		Now expand $\Omega=\sum_k\Omega_ke^{ik\theta}$ in $L^2(\T)$. The partial sums $S_K\Omega(\theta)$ converge to $G$ in $\mathcal S'(\R^2)$, since $|\langle G-S_K\Omega(\theta),\psi\rangle|\leq\|\Omega-S_K\Omega\|_{L^2(\T)}\int_0^\infty\|\psi(\rho e^{i\,\cdot})\|_{L^2(\T)}\,\rho\,d\rho$. Taking Fourier transforms, \eqref{BH} and \eqref{series} (with $t_k=|k|i^{-|k|}\Omega_k$, $s=1$) give
		\begin{equation}\label{Ghat}
			\hat G=a\,\Omega_0\,\delta_0+c\,\big(T_\Omega\otimes\rho^{-2}\big),\qquad T_\Omega=\sum_{k\neq0}|k|\,i^{-|k|}\,\Omega_k\,e^{ik\varphi},
		\end{equation}
		where $a\,\delta_0$ is the transform of the constant $1$. To identify $T_\Omega$, split $\Omega=\Omega_o+\Omega_e$ into the sums over odd and over even $k$, so that $\Omega_o(\theta)=\frac12\big(\Omega(\theta)-\Omega(\theta+\pi)\big)$ and $\Omega_e(\theta)=\frac12\big(\Omega(\theta)+\Omega(\theta+\pi)\big)$. Since $i^{-|k|}=e^{-i\pi|k|/2}$, one checks that $|k|\,i^{-|k|}=k\,e^{-i\pi k/2}$ when $k$ is odd and $|k|\,i^{-|k|}=|k|\,e^{-i\pi k/2}$ when $k$ is even. Hence
		\begin{equation}\label{TOmega}
			\begin{aligned}
			T_\Omega(\varphi)&=\sum_{k\ \mathrm{odd}}k\,\Omega_k\,e^{ik(\varphi-\pi/2)}+\sum_{k\ \mathrm{even},\,k\neq0}|k|\,\Omega_k\,e^{ik(\varphi-\pi/2)}\\
			&=-i\,\tau_{\pi/2}\big(\Omega_o'\big)+\tau_{\pi/2}\big(|D|\Omega_e\big),
			\end{aligned}
		\end{equation}
		where $|D|$ is the Fourier multiplier $|k|$ on $\T$, i.e.\ $|D|=\mathcal H\partial_\varphi$ with $\mathcal H$ the conjugate function operator (multiplier $-i\,\tmop{sign}k$). Both $\Omega_o'$ and $|D|\Omega_e$ are distributions of finite order with zero mean, so \eqref{Ghat} applies to $\Omega_o$ and to $\Omega_e$ separately. The operator $\partial_\varphi$ is local, whereas $|D|$ is not; this is why odd and even $\Omega$ behave so differently.

		\emph{Step 3: supports.} For $T$ as in Step 2,
		\begin{equation}\label{supp}
			\tmop{supp}\big(T\otimes\rho^{-2}\big)\setminus\{0\}=\{\rho e^{i\varphi}:\ \rho>0,\ \varphi\in\tmop{supp}T\}.
		\end{equation}
		If $\psi\in C_c^\infty(\R^2)$ is supported away from the origin and from the right-hand side, then for every $\rho$ the function $\psi(\rho e^{i\,\cdot})$ is supported in $\T\setminus\tmop{supp}T$, so $\langle T,\psi(\rho e^{i\,\cdot})\rangle=0$; this proves the inclusion $\subseteq$. Conversely, let $\varphi_0\in\tmop{supp}T$, $\rho_0>0$, and let $W$ be a neighborhood of $\rho_0e^{i\varphi_0}$. Choose $a_0\in C_c^\infty((0,\infty))$ with $\int a_0\,d\rho/\rho\neq0$ and $b\in C^\infty(\T)$ with $\langle T,b\rangle\neq0$, supported so close to $\rho_0$ and $\varphi_0$ that $\psi(\rho e^{i\varphi})=a_0(\rho)b(\varphi)$ is supported in $W$; then $\langle T\otimes\rho^{-2},\psi\rangle=\langle T,b\rangle\int a_0\,d\rho/\rho\neq0$. Since the cone over $N$ is $Q_{2,4}$, \eqref{supp} yields the criterion
		\begin{equation}\label{crit}
			\tmop{supp}\big(T\otimes\rho^{-2}\big)\subset Q_{2,4}\cup\{0\}\iff\tmop{supp}T\subset N\iff\tmop{supp}\big(\tau_{-\pi/2}T\big)\subset P .
		\end{equation}
		Here $\tmop{supp}T\subset N$, with $\tmop{supp}T$ closed and $N$ open, means that $T$ vanishes on an open set containing the closed arcs $\T\setminus N$. This is where the exclusion of the punctured axes from the spectral sense enters: a point mass of $T$ at an axis direction would put a whole punctured axis into the spectrum.

		\emph{Step 4: necessity, the odd part.} Assume $\tmop{supp}\hat G\subset Q_{2,4}\cup\{0\}$. The reflection $z\mapsto-z$ replaces $\Omega(\theta)$ by $\Omega(\theta+\pi)$ and $\hat G(\xi)$ by $\hat G(-\xi)$; as $Q_{2,4}\cup\{0\}$ is symmetric, the transforms of $G_o=\Omega_o(\theta)$ and $G_e=\Omega_e(\theta)$ are both supported in $Q_{2,4}\cup\{0\}$. By \eqref{Ghat}, \eqref{TOmega} and \eqref{crit} applied to $\Omega_o$ (for which $\Omega_0=0$), $\tmop{supp}\Omega_o'\subset P$: the distribution $\Omega_o'$ vanishes on an open set containing the two closed arcs of non-positive slope, i.e.\ $\Omega_o$ is locally constant on a neighborhood of these arcs.

		\emph{Step 5: necessity, the even part.} By \eqref{Ghat}, \eqref{TOmega} and \eqref{crit} applied to $\Omega_e$ (the term at the origin in \eqref{Ghat} is irrelevant for \eqref{crit}), $\tmop{supp}(|D|\Omega_e)\subset P$; in particular $|D|\Omega_e=(\mathcal H\Omega_e)'$ vanishes on the open arc $I=(\pi/2,\pi)$, so the conjugate function $h=\mathcal H\Omega_e\in L^2(\T)$ equals a constant $\kappa$ a.e.\ on $I$. Consider
		$$g(z)=\Omega_{e,0}+2\sum_{k\geq1}\Omega_{e,k}\,z^k\in H^2(\mathbb D),$$
		whose boundary function is $\Omega_e+ih$ (recall that $\Omega_e$ is real, so $\Omega_{e,-k}=\overline{\Omega_{e,k}}$); thus $g=u+iv$ with $u=P[\Omega_e]$ and $v=P[h]$ the Poisson integrals of the boundary functions. Since $h-\kappa=0$ a.e.\ on $I$,
		$$v(z)=\kappa+\frac1{2\pi}\int_{\T\setminus I}\Re\frac{w+z}{w-z}\,\big(h(w)-\kappa\big)\,|dw| ,$$
		and the right-hand side is harmonic on $\C\setminus(\T\setminus I)$, an open set containing $\mathbb D\cup I$, and equals $\kappa$ at every point of $I$, because the Poisson kernel $\Re\frac{w+z}{w-z}=\frac{1-|z|^2}{|w-z|^2}$ vanishes on the unit circle away from its pole. Let $z_0\in I$ and let $B$ be a disc centered at $z_0$ and contained in $\C\setminus(\T\setminus I)$. On $B$ the harmonic function $v$ has a harmonic conjugate: there is $U$ with $U+iv$ holomorphic on $B$. On the convex set $B\cap\mathbb D$ the function $g-(U+iv)$ is holomorphic with vanishing imaginary part, hence a real constant, which we absorb into $U$. Thus $g$ extends holomorphically to $\mathbb D\cup B$, and doing this at every point of $I$ we obtain a holomorphic extension $\tilde g$ of $g$ to a neighborhood of $\mathbb D\cup I$ (the local extensions agree on overlaps by the identity theorem), with $\Im\tilde g=\kappa$ on $I$. The radial limits of $g$, which exist a.e.\ and equal $\Omega_e+ih$, coincide on $I$ with the values of the continuous function $\tilde g$; hence $\Omega_e=\Re\tilde g$ a.e.\ on $I$. Now $\Re\tilde g$ is real-analytic on $I$ and a.e.\ equal to a function with values in $\{-1,0,1\}$; the open subset of $I$ where it takes other values is null, hence empty, and by connectedness $\Re\tilde g\equiv\kappa_1\in\{-1,0,1\}$ on $I$. So $\tilde g=\kappa_1+i\kappa$ on the arc $I$, and by the identity theorem $g\equiv\kappa_1+i\kappa$; in particular $\Omega_e=\kappa_1$ a.e.\ on $\T$.

		If $\kappa_1=\pm1$, then $\Omega(\theta)+\Omega(\theta+\pi)=\pm2$ a.e., which forces $\Omega\equiv\pm1$, i.e.\ $A$ null or conull, contrary to the hypothesis. Hence $\kappa_1=0$ and $\Omega=\Omega_o$ is odd, which is (a); Step 4 then gives (b).

		\emph{Step 6: sufficiency and the formula.} Conversely, let (a) and (b) hold. Then $\Omega_e=0$, $\Omega_0=0$, and \eqref{Ghat}, \eqref{TOmega} read
		$$\hat G=-ic\,\big(\tau_{\pi/2}\Omega'\big)\otimes\rho^{-2},$$
		which is the precise meaning of the expression $|\xi|^{-2}\,\Omega'(\varphi-\pi/2)$ in the statement. By (b), $\tmop{supp}\Omega'\subset P$, so \eqref{crit} gives $\tmop{supp}\hat G\subset Q_{2,4}\cup\{0\}$, and by \eqref{supp} the spectrum consists of the origin and the lines through it in the directions $\varphi\in\tmop{supp}\Omega'+\frac\pi2$, the normals to the rays $\theta\in\tmop{supp}\Omega'$ on which $\Omega$ is not locally constant. Finally, (a) determines $\Omega$ on one arc of positive slope from the other, and (a) and (b) together determine $\Omega$ on the arcs of non-positive slope from its values near their endpoints, which lie in the arcs of positive slope; so $A$ is determined by its trace on one open arc of positive slope, and this trace is an arbitrary measurable set subject only to (b) near the endpoints. The set $F$ is a fan exactly when $\Omega$ has finitely many jumps, i.e.\ when the trace is a finite union of intervals.
	\end{proof}

	Two consistency checks. For $\Omega=\tmop{sign}\cos\theta$, i.e.\ $G=\tmop{sign}x$, one has $\Omega'=2\delta_{-\pi/2}-2\delta_{\pi/2}$, so \eqref{Ghat} gives $\hat G$ proportional to $(\delta_0-\delta_\pi)\otimes\rho^{-2}=\pv(1/\xi_1)\otimes\delta(\xi_2)$, as it should be. For the even ``bowtie'' $\Omega=\tmop{sign}\sin2\theta$, i.e.\ $G=\tmop{sign}x\,\tmop{sign}y$, the distribution $|D|\Omega=\mathcal H(\Omega')$ is the conjugate of a comb of four point masses, a multiple of $\pv\,1/\sin2\varphi$, and \eqref{Ghat} gives $\hat G$ proportional to $\pv\,1/(\xi_1\xi_2)$, the product of the one-dimensional transforms, supported in the whole plane.

	Theorem \ref{Tcone} is the homogeneous counterpart of Theorem \ref{Tinterval}: there the invariant sets with interval slices were parametrized by singular Herglotz functions, here the invariant cones are parametrized by arbitrary measurable subsets of an interval of directions. In the wave language of Section \ref{sec:waves} the cones are the self-similar fields $F(x,t)=\Omega(\arg(x,t))$ -- solutions of the linearized Riemann problem for a continuum of characteristic speeds -- and the spectrum, a cone of lines, is two-dimensional as soon as the jump set of $\Omega$ has positive measure.

	\subsection{Rigidity of multi-directional examples}

	How special are these examples? Write a bounded candidate whose spectrum is carried by finitely many lines through the origin as
	$$2\chi_S-1=\sum_{d=1}^Dg_d(\ell_d),$$
	a finite sum of bounded waves along pairwise non-parallel linear functionals $\ell_d=a_dy-b_dx$ with $b_d/a_d>0$ (so that all frequencies lie in $Q_{2,4}$; this form of $2\chi_S-1$ is forced by the spectral support, and for lattice-periodic $S$ the profiles $g_d$ are periodic). The fans of Proposition \ref{Pfan} show that with step profiles every odd $D$ occurs, all lines then passing through one point. The following statements concern the complementary situation of profiles which are not integer-valued -- sawtooths rather than steps -- and indicate that there $D\in\{1,3\}$, with $D=1$ the unions of parallel strips and $D=3$ the carry example.

	\begin{lemma}[two directions are impossible]\label{Ltwo}
		If $g_1(u)+g_2(v)$ takes only two values on a set of full measure, with $u,v$ independent variables, then $g_1$ or $g_2$ is a.e.\ constant.
	\end{lemma}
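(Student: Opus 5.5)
We use a measure-theoretic rectangle argument, with $(u,v)$ regarded as independent coordinates on $\R^2$, the case of interest. Suppose $g_1(u)+g_2(v)\in\{\alpha,\beta\}$ for a.e.\ $(u,v)$, with $\alpha\neq\beta$, and that neither $g_1$ nor $g_2$ is a.e.\ constant. The aim is a contradiction via Fubini.

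\emph{Step 1: fibers.} By Fubini, for a.e.\ $v$ the identity $g_1(u)\in\{\alpha-g_2(v),\beta-g_2(v)\}$ holds for a.e.\ $u$. Fix one such $v_0$. Then $g_1$ takes at most two values a.e., and since $g_1$ is not a.e.\ constant it takes exactly two values $a_1\neq a_2$, each on a set of positive measure. Moreover $\{a_1,a_2\}=\{\alpha,\beta\}-g_2(v_0)$, so $a_2-a_1=\pm(\beta-\alpha)$; after relabelling, $a_2-a_1=\beta-\alpha=:\delta\neq0$. By the same argument, $g_2$ takes exactly two values $b_1,b_2$ with $|b_2-b_1|=|\delta|$.

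\emph{Step 2: rectangles.} The four sets $\{g_1=a_i\}\times\{g_2=b_j\}$ all have positive product measure. On each of them the sum equals $a_i+b_j$, and each of these four numbers must therefore lie in $\{\alpha,\beta\}$. Now consider $a_1+b_1$, $a_2+b_1$, $a_1+b_2$, and $a_2+b_2$. Since $a_2+b_1=a_1+b_1+\delta$ and both values lie in $\{\alpha,\beta\}$ with $\beta-\alpha=\delta$, we get $a_1+b_1=\alpha$ and $a_2+b_1=\beta$. By the same reasoning, $a_1+b_2=\alpha$. Hence $b_2=b_1$, contradicting $b_1\neq b_2$. Equivalently, in the other labelling, $a_2+b_2=\alpha+2\delta$ (or $\alpha$ with $b$'s coinciding) falls outside the two-point set. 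In every case a three-valued sum appears, which is the contradiction.

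\emph{Main obstacle.} The only delicate point is to make the a.e.\ reasoning rigorous, namely to ensure that the level sets used in Step 2 have positive measure and that the sum is correctly evaluated on them. This is what Fubini gives once $u,v$ are independent coordinates, i.e.\ after a linear change of variables with nonzero Jacobian when $u,v$ are non-parallel linear functionals on $\R^2$. Such a change preserves null sets, so the reduction to the product situation above is harmless. With that in place, the remaining combinatorics is immediate.
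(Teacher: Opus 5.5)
Your proof is correct and follows essentially the paper's argument: the four sums $a_i+b_j$, each attained on a product of positive-measure level sets, cannot all lie in a two-element set. Your Step 1 (Fubini on a fiber $v_0$) usefully makes explicit why $g_1$ and $g_2$ each take exactly two values on sets of positive measure. The paper's proof simply assumes this, and only later alludes to an essential-range version of the argument.
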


	\begin{proof}
		Suppose $g_1$ takes values $a_1\neq a_2$ and $g_2$ values $b_1\neq b_2$, each on positive measure. The four sums $a_i+b_j$ must lie in a two-element set. From $a_1+b_1\neq a_1+b_2$ the two values are $v_1=a_1+b_1$ and $v_1+\delta$, $\delta=b_2-b_1$; then $a_2+b_1\in\{v_1,v_1+\delta\}$ forces $a_2-a_1=\delta$, and $a_2+b_2=v_1+2\delta$ lies outside the set.
	\end{proof}

	\begin{proposition}[character rigidity]\label{Pchar}
		Suppose $\sigma=g_1(u)+g_2(v)+g_3(u+v)$ is $\{0,1\}$-valued a.e., with bounded measurable $g_d$, none a.e.\ constant. Then there exist $\mu\in\R$, constants $\gamma_d$, and integer-valued measurable functions $N_d$ such that
		$$g_1(x)=\mu x-N_1(x)+\gamma_1,\quad g_2(x)=\mu x-N_2(x)+\gamma_2,\quad g_3(x)=-\mu x-N_3(x)+\gamma_3 ,$$
		so that $N_1(u)+N_2(v)+N_3(u+v)$ is two-valued. Here $\mu=0$ exactly when the $g_d$ are integer-valued up to additive constants. When $\mu\neq0$ the three frequencies resonate -- the third functional is the sum of the first two -- which is why the third family of Theorem \ref{Tcarry}, the sum of the like-oriented functionals $y-x$ and $y-\frac x2$, has the intermediate slope $\frac 34$. The case $\mu=0$ occurs for the fans of Proposition \ref{Pfan}, where the resonance takes the form of the concurrency of the three lines.
	\end{proposition}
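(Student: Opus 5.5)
The plan is to exponentiate and reduce the statement to a Pexider-type functional equation on the circle group. Since $\sigma\in\{0,1\}$ a.e., we have $e^{2\pi i\sigma}=1$ a.e. Put $\varphi_d=e^{2\pi i g_d}$. These are unimodular measurable functions, and
$$\varphi_1(u)\,\varphi_2(v)\,\varphi_3(u+v)=1\qquad\text{for a.e. }(u,v)\in\R^2 .$$
Everything integer-valued has been discarded, and what remains is a multiplicative Cauchy equation for measurable maps into $\T$. Once the measurable solutions are shown to be characters times constants, the conclusion follows by taking logarithms.

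\emph{Step 1: reduction to a single homomorphism.} By Fubini, fix $v_0$ such that the identity holds for a.e.\ $u$. This gives $\varphi_3(x)=\overline{\varphi_2(v_0)}\,\overline{\varphi_1(x-v_0)}$ for a.e.\ $x$. Similarly, fixing $u_0$ expresses $\varphi_2$ through $\varphi_3$, and hence through $\varphi_1$, up to a translation and a unimodular constant. Substituting back, the function $\chi(x):=\varphi_1(x)/\varphi_1(0^\ast)$ satisfies $\chi(u+v)=\chi(u)\chi(v)$ for a.e.\ $(u,v)$. Here the normalization constant $\varphi_1(0^\ast)$ is chosen via a generic base point rather than the literal value at $0$. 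The shifts and constants are bookkept exactly as in the classical solution of the Pexider equation $f(u+v)=g(u)h(v)$. The outcome is $\varphi_1=c_1\chi$, $\varphi_2=c_2\chi$ and $\varphi_3=(c_1c_2)^{-1}\bar\chi$ a.e.

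\emph{Step 2: measurable a.e.-characters are continuous.} This is the only analytic point, and I expect it to be the main obstacle, mainly because of the a.e.\ quantifiers. I would choose $\rho\in C_c(\R)$ with $\int\rho\chi\neq0$. This is possible because $\chi\neq0$ and $C_c$ is dense in the predual. Then for a.e.\ $u$,
$$\chi(u)\int\rho(v)\chi(v)\,dv=\int\rho(v)\chi(u+v)\,dv,$$
and the right-hand side is continuous in $u$, since translation is continuous in $L^1_{loc}$. Hence $\chi$ agrees a.e.\ with a continuous function. That function is an honest continuous homomorphism $\R\to\T$, so $\chi(x)=e^{2\pi i\mu x}$ for some $\mu\in\R$.

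\emph{Step 3: taking logarithms.} Write $c_d=e^{2\pi i\gamma_d}$, with $\gamma_3$ chosen so that $e^{2\pi i\gamma_3}=(c_1c_2)^{-1}$. The equality $e^{2\pi i g_1(x)}=e^{2\pi i(\mu x+\gamma_1)}$ a.e.\ says that $N_1:=\mu x+\gamma_1-g_1$ is a.e.\ integer-valued. It is measurable because $g_1$ is, and the same argument applies to $N_2$ and to $N_3:=-\mu x+\gamma_3-g_3$. This gives the three displayed formulas. Substituting them into $\sigma$, the $\mu$-terms cancel because $\mu u+\mu v-\mu(u+v)=0$, leaving
$$\sigma=\gamma_1+\gamma_2+\gamma_3-\big(N_1(u)+N_2(v)+N_3(u+v)\big).$$
So the integer combination $N_1(u)+N_2(v)+N_3(u+v)$ is two-valued.

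\emph{Step 4: the case $\mu=0$.} If $\mu=0$, then each $g_d-\gamma_d=-N_d$ is integer-valued. Conversely, suppose $g_1-c$ is integer-valued a.e.\ for some constant $c$. Then $\mu x\in\Z+c-\gamma_1$ for a.e.\ $x$. A nonconstant affine function cannot take values in a countable set on a set of positive measure, so $\mu=0$. The hypothesis that no $g_d$ is a.e.\ constant is not needed for the derivation itself. It only guarantees that the three-term form is genuine, which is the non-degenerate situation relevant after Lemma \ref{Ltwo}. The remarks on resonance are interpretation: the character $\bar\chi$ attached to the functional $u+v$ is the product of those attached to $u$ and $v$.
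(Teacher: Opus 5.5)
Your proposal is correct and follows the paper's route. You exponentiate to $e^{2\pi i g_1(u)}e^{2\pi i g_2(v)}e^{2\pi i g_3(u+v)}=1$ a.e., solve the measurable multiplicative Cauchy--Pexider equation to obtain constants times characters with $\mu_1=\mu_2=-\mu_3$, and take logarithms; your convolution argument in Step 2 (and your converse in Step 4) supplies details the paper only cites. One small tidy-up: the normalization by $\varphi_1(0^\ast)$ in Step 1 is cleanest if you first apply the same smoothing directly to $\varphi_3(u+v)=\overline{\varphi_1(u)}\,\overline{\varphi_2(v)}$, integrating against $\rho(v)\varphi_2(v)$, so that all three functions become continuous and the classical Pexider solution applies pointwise.
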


	\begin{proof}[Proof (sketch)]
		Since $\sigma\in\Z$ a.e., $e^{2\pi ig_1(u)}e^{2\pi ig_2(v)}e^{2\pi ig_3(u+v)}=1$ a.e. Setting $h=e^{-2\pi ig_3}$ we get $h(u+v)=f_1(u)f_2(v)$ with unimodular measurable $f_j$; the multiplicative Cauchy functional equation for measurable functions forces $h$ and $f_j$ to be unimodular characters times constants, i.e.\ $e^{2\pi ig_d(x)}=c_de^{2\pi i\mu_dx}$ with $\mu_1=\mu_2=-\mu_3=\mu$ and $c_1c_2c_3=1$. Taking logarithms yields the stated form; $\mu=0$ means that each $e^{2\pi ig_d}$ is constant, i.e.\ that $g_d$ is integer-valued up to a constant.
	\end{proof}

	Proposition \ref{Pchar} leaves two cases. If $\mu\neq0$, boundedness of the $g_d$ says that $N_1-\mu x$, $N_2-\mu x$ and $N_3+\mu x$ are bounded, and one expects the $N_d$ to be floor functions. We record the statement we need as a conjecture; it belongs to the circle of ideas around Beatty sequences and bounded remainder sets, cf.\ \cite{Gr}.

	\begin{conjecture}[floor rigidity]\label{Lsturm}
		Let $\mu\neq0$ and let $N_1,N_2,N_3:\R\to\Z$ be measurable, with $N_1-\mu x$, $N_2-\mu x$ and $N_3+\mu x$ bounded, such that $N_1(u)+N_2(v)+N_3(u+v)$ takes only two values a.e. Then, up to additive integer constants, $N_1(x)=\lfloor\mu x+\rho_1\rfloor$, $N_2(x)=\lfloor\mu x+\rho_2\rfloor$ and $N_3(x)=-\lfloor\mu x+\rho_1+\rho_2\rfloor$ for some $\rho_1,\rho_2$.
	\end{conjecture}

	Granting Conjecture \ref{Lsturm}, every $D=3$ example with $\mu\neq0$ is an affine image of the carry set of Theorem \ref{Tcarry} or of its complement. If $\mu=0$ the profiles are integer-valued up to constants, and the three-line fans of Proposition \ref{Pfan} are solutions, with $N_1=H(\cdot-a)$, $N_2=H(\cdot-b)$, $N_3=-H(\cdot-a-b)$; we conjecture that, up to constants and complementation, these are the only bounded integer-valued solutions of the three-term equation. Note that the analogue of Conjecture \ref{Lsturm} fails for $\mu=0$: the Heaviside triple is a bounded integer-valued solution which is not of floor type. For more directions the two regimes separate completely: with integer-valued profiles the fans give solutions for every odd $D$, while for sawtooth-type profiles we have the following obstruction.

	\begin{proposition}[four waves, generic resonance]\label{Pfour}
		Let $z_d=\mu_d\ell_d+\rho_d$, $d=1,\dots,4$, with all $\mu_d\neq 0$, the functionals $\mu_d\ell_d$ pairwise non-proportional and satisfying the single resonance $\sum\mu_d\ell_d=0$, and no other rational relation. Then $\sum_{d=1}^4\{z_d\}$ takes at least three values on sets of positive measure; in particular it is not two-valued.
	\end{proposition}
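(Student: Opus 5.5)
The idea is to pass from the plane to a torus. Consider the affine map $\Phi:\R^2\to\T^4$, $\Phi(z)=(z_1,\dots,z_4)\bmod 1$, and write $\sigma=\sum_d\{z_d\}=f\circ\Phi$, where $f(\theta)=\sum_d\{\theta_d\}$ for $\theta\in[0,1)^4$. Then $f$ is continuous off the null union of faces $\{\theta_d=0\}$. The plan has three steps: identify the closure of $\Phi(\R^2)$ as a $3$-dimensional subtorus; show that $\Phi$ equidistributes there; and show that $f$ takes three distinct values on open subsets of that subtorus.

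\emph{Step 1: the orbit closure.} The linear part of $\Phi$ is $z\mapsto(\mu_1\ell_1(z),\dots,\mu_4\ell_4(z))$. The closure of its image mod $\Z^4$ is a closed connected subgroup $K$. The annihilator of $K$ consists of the integer vectors $k\in\Z^4$ with $\sum_dk_d\mu_d\ell_d=0$ as a linear functional. The hypothesis (a single resonance and no other rational relation) says these are exactly the multiples of $(1,1,1,1)$. Hence
$$K=\{\theta\in\T^4:\ \textstyle\sum_d\theta_d=0\},$$
and $\overline{\Phi(\R^2)}=K+\rho$ with $\rho=(\rho_d)$, a coset of a $3$-torus. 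Pairwise non-proportionality is not needed here; it only guarantees that the four families of lines are genuinely distinct.

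\emph{Step 2: equidistribution.} Apply Weyl's criterion for the continuous action of $\R^2$. Take a character $e^{2\pi i\langle k,\theta\rangle}$ that is nontrivial on $K$, i.e.\ $k\notin\Z(1,1,1,1)$. Its pullback is $e^{2\pi i L_k(z)}$, where $L_k=\sum_d k_d\mu_d\ell_d$ is a nonzero linear functional. The averages of this exponential over discs of radius $R$ tend to $0$. Consequently, for every open $U\subset K+\rho$ with null boundary, the set $\Phi^{-1}(U)$ has positive asymptotic density; in particular it has positive Lebesgue measure. I expect this to be the main technical point. The step I would check most carefully is that the hypothesis ``no other rational relation'' is read exactly as the annihilator statement above. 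In particular one must ensure that no relation exists among a proper subset of the $\mu_d\ell_d$; otherwise $K$ drops in dimension.

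\emph{Step 3: three values on the torus.} Put $c=\sum_d\rho_d \bmod 1$ and parametrize $K+\rho$ by $(\theta_1,\theta_2,\theta_3)\in[0,1)^3$, with $\theta_4=\{c-s\}$ and $s=\theta_1+\theta_2+\theta_3$. Then
$$f=s+\{c-s\}=c+\lceil s-c\rceil \quad\text{(for } c\in[0,1)\text{, up to null sets).}$$
As $s$ ranges over $(0,3)$, this takes the three values $c$, $c+1$, $c+2$ on the open regions
$$\{0<s<c\},\qquad \{c<s<c+1\},\qquad \{c+1<s<c+2\}$$
of the cube. These regions are nonempty; if $c=0$, use instead $\{0<s<1\}$, $\{1<s<2\}$ and $\{2<s<3\}$. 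Since $f$ is continuous on each region, Step 2 shows that $\sigma$ takes each of these three values on a set of positive measure in the plane, which proves the proposition. The only remaining care is to confirm that discontinuities of $\{\cdot\}$ affect only a null set of the plane. This holds because these discontinuities occur on the null set of lines $z_d\in\Z$.
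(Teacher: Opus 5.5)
Your proof is correct and follows the paper's own route. It uses Weyl equidistribution of $(z_1,\dots,z_4)\bmod 1$ on the coset $\{\sum_d\theta_d\equiv c\}$ of a $3$-dimensional subtorus, whose annihilator is exactly $\Z(1,1,1,1)$ by the hypothesis of no other rational relation, and then exhibits three levels of $\sum_d\{\theta_d\}$ on relatively open pieces; your formula $f=c+\lceil s-c\rceil$ makes precise, including the case $c=0$, the paper's ``all coordinates small / one close to $1$ / all close to $1$'' step.
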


	\begin{proof}
		By Weyl equidistribution, the essential range of $(\{z_1\},\dots,\{z_4\})$ is the subtorus $\{\theta\in(\R/\Z)^4:\ \theta_1+\theta_2+\theta_3+\theta_4\equiv c\}$ for the appropriate $c$. On it, $\sum\theta_d\in\{\tilde c,\tilde c+1,\tilde c+2,\tilde c+3\}$ where $\tilde c=\{c\}$; choosing all coordinates small, one coordinate close to $1$, and all coordinates close to $1$, respectively, realizes at least three distinct levels on relatively open sets (with obvious modifications when $\tilde c=0$).
	\end{proof}

	\subsection{Sets with boundary on countably many lines}

	\begin{theorem}[finite cross-sections]\label{Tlines}
		Let $S$ have all cross-sections of finite measure, horizontal ones uniformly bounded, let $\H\chi_S=\chi_S$, and suppose $\partial S$ is contained in a locally finite countable union of lines with finitely many directions. Then $S$ is, up to a null set, a countable union of parallel strips of one positive slope; that is, $S$ is a diagonal set $\{\alpha y-\beta x\in E\}$ with $E$ a countable union of intervals of finite total length.
	\end{theorem}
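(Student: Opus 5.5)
Our plan is to exploit the fact that $\partial S$ lies on finitely many directions of lines, so that $\chi_S$ is locally a sum of step functions of finitely many linear functionals, and then to use the slice results of Section \ref{sec:class} to rule out every direction but one.

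\emph{Step 1: decomposition into waves.} Let the directions be given by linear functionals $\ell_1,\dots,\ell_D$, pairwise non-proportional. A set whose boundary lies on a locally finite union of lines in these directions has an indicator that jumps only across those lines. We would show that $\chi_S=\sum_d g_d(\ell_d)+r$, with locally bounded profiles $g_d$, and with $r$ supported near the intersection points. Locally around a vertex, the jump across a line is constant along each open segment between vertices. Equivalently, we may read off the spectrum: since $\widehat{\chi_S}$ restricted to a generic direction is governed by the jump measures, the wavefront set of $\chi_S$ is contained in the $D$ normal lines. Invariance then forces each normal line not lying in $\overline{Q_{2,4}}$ to carry no mass. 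A line in $\overline{Q_{1,3}}$ or on an axis corresponds to a direction of negative slope, or to a horizontal or vertical one.

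\emph{Step 2: excluding bad directions.} Consider first vertical boundary lines, and lines of negative slope. Here we would use Proposition \ref{Pconst}: $|S_y|$ is constant. By Lemma \ref{Lslice}, $F(t,\cdot)$ is an $H^\infty(\C_-)$ boundary value. Horizontal boundary lines are impossible, since crossing one at height $y_0$ changes $S_y$ by a set of positive measure. Constancy of $|S_y|$ then forces the endpoint configuration on both sides to have equal length. Iterating over the finitely many directions, we then apply Proposition \ref{Pbary}. Its moment hypothesis holds because slices are finite unions of intervals, controlled locally. The barycenter $m_1(y)$ is piecewise affine, being a sum over interval endpoints, each affine in $y$. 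It is also a real Nevanlinna boundary value with singular measure. A piecewise affine function is a boundary value of that kind only if it is globally affine, since the singular measure would have to sit at the breakpoints, yet near an atom a Nevanlinna function blows up like $1/(s-y)$. So $m_1(y)=\beta y+\gamma$. The same argument applies to higher moments $m_k(y)=\int_{S_y}x^k\,dx$, which are obtained from the Taylor coefficients of $F(t,y)$ at $t=0$. These are polynomial expressions in the endpoints and must be boundary values with the analogous structure.

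\emph{Step 3: reduction to the interval/translate case.} The main idea is to prove that $S_y$ is a translate $\alpha(y)+E$ of a fixed set. Once this is established, the Remark after Theorem \ref{Tinterval} yields $\alpha=\Phi$, a singular Herglotz function. Since $\alpha$ is piecewise affine with finitely many slopes and locally finitely many breaks, $\Phi$ must be affine, as in Step 2, which gives a diagonal set. To get the translate structure, we would compare the interval endpoints $e_j(y)$, each affine in $y$ on pieces. The function $F(t,y)=\sum_j\pm e^{-ite_j(y)}/(it)$ must extend boundedly to $\C_-$ for every $t>0$. On each piece the endpoint lines have slopes $s_j$. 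The exponentials $e^{-its_jy}$ with distinct $s_j$ are linearly independent modulo the positive-frequency part only if all frequencies $-ts_j$ are $\le0$. Moreover, distinct slopes across a vertex produce a polynomially bounded but non-analytic gluing, because the analytic continuation of the exponential sum on one piece disagrees with that on the adjacent piece. Hence all endpoints share one slope $1/m$ with $m>0$, and no vertices occur. The main obstacle is exactly this last step: making the gluing argument rigorous. Locally finite vertices on a single slice line could in principle cancel. We would try first to handle it by a uniqueness theorem for exponential sums, namely that a finite sum $\sum c_je^{-it s_j w}$ agreeing with another such sum on an interval agrees everywhere, combined with the boundedness in $\C_-$ forcing the rightmost frequency set. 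Finally, uniform boundedness of $|S_y|$ and finiteness of vertical sections exclude infinitely many strips accumulating, and $E$ is a countable union of intervals with $|E|=|S_y|<\infty$.
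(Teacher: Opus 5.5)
Your Step 3 identifies where the proof lives, but you leave that step open, and the tool you suggest does not apply. The exponential sums describing $F(t,\cdot)$ above and below a transition height live on disjoint intervals of heights, so a uniqueness theorem for two sums agreeing on a common interval gives nothing.

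The missing mechanism, which is essentially the whole of the paper's proof, is to connect the two bands through the single function $G_t\in H^\infty(\C_-)$ of Lemma \ref{Lslice}:

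\begin{enumerate}
\item On a band of heights free of vertices, the slice is a locally finite union of intervals with affine endpoints. On a compact sub-band only finitely many of them have non-parallel edges: their lengths are bounded below, since there are finitely many slopes, and they add up to at most $M$. Each parallel-edge term is bounded by its length times $e^{t|a||\Im y|}$. So the local formula for $F(t,\cdot)$ is an entire function of $y$, which also disposes of your worry about infinitely many vertices cancelling.
\item By boundary uniqueness (Lusin--Privalov), $G_t$ coincides in $\C_-$ with the entire function of each band. Hence the entire functions of adjacent bands are identical.
\item Group by slope and use that the functions $t\mapsto e^{-itb}$ are linearly independent for distinct intercepts $b$; the identity holds for every $t>0$. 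Then the multiset of endpoint lines (slope, intercept, orientation) cannot change at any height.
\end{enumerate}

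Everything then follows directly. There are no corners, no appearing or disappearing intervals, and no horizontal edges. The two edges of an interval, and any two strips, must be parallel, since crossing lines would create a transition. The slope is positive because $e^{-it\sigma y}$ must be inner in $\C_-$ for $t>0$, and vertical strips have infinite vertical sections. No reduction to translates or to the remark after Theorem \ref{Tinterval} is needed. Contrary to your last sentence, infinitely many parallel strips are allowed.

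Several auxiliary claims in Steps 1--2 are wrong or insufficient and should be dropped.
\begin{itemize}
\item The Fourier transform of $\chi_S$ is not carried by the $D$ normal lines once $\partial S$ has corners. A bounded parallelogram has a real-analytic, nonzero Fourier transform, hence one supported on all of $\R^2$. You are conflating the wavefront set with the Fourier support, and the decomposition $\chi_S=\sum_dg_d(\ell_d)+r$ with $r$ localized near vertices fails for the same reason.
\item Constancy of $|S_y|$ does not exclude a horizontal boundary segment, since a piece lost at height $y_0$ can be compensated elsewhere on the same slice. Only the comparison of exponential sums excludes it.
\item Affinity of $m_1$ is far from translate structure. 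It also needs a Boole-type argument, since a singular measure can sit on a null set away from the breakpoints.
\item The moments $m_k$, $k\geq2$, are neither bounded nor of Nevanlinna type; for a diagonal strip $m_2$ is quadratic in $y$. So the moment route does not close.
\end{itemize}
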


	\begin{proof}[Proof (sketch)]
		On any interval of heights free of intersection points of the boundary lines, the slice of $S$ is a locally finite union of intervals with affine endpoints, of total length at most $M$, and
		$$F(t,y)=\sum_j\frac{e^{-it\alpha_j(y)}-e^{-it\beta_j(y)}}{it},\qquad \alpha_j,\beta_j\ \text{affine}.$$
		Only finitely many of these intervals have non-parallel edges (their lengths are $|a_j-a_j'|\,|y-y_j|$ with $|a_j-a_j'|$ bounded below, since there are finitely many slopes, and with $y_j$ outside the interval of heights, and the lengths add up to at most $M$), and the remaining terms, with parallel edges of slope $a$, are bounded by $\ell_j e^{t|a||\Im y|}$ for complex $y$; hence the sum converges locally uniformly in $y\in\C$ and defines an entire function of $y$. By Lemma \ref{Lslice}, $F(t,\cdot)$ is the boundary value of a single function analytic in $\C_-$; hence the local formulas on the two sides of any transition height must agree identically. For fixed slope, the coefficients $e^{-itb}$ attached to distinct intercepts $b$ are linearly independent as functions of $t$, so the multiset of endpoint lines (slope, intercept, orientation) cannot change at any height: there are no corners, no appearing or disappearing intervals, no bowties. Consequently every slice endpoint moves along one full line for all $y$; the two edges of one interval must be parallel (otherwise they cross, a forbidden transition), and any two strips must be parallel (otherwise their edges cross). Two non-parallel strips also cannot coexist for a direct reason: their union differs from the sum of their indicators by the indicator of a bounded parallelogram $P$, and $(\H-\mathcal I)\chi_P\neq 0$ since $\widehat{\chi_P}$ is entire and cannot vanish on an open quadrant (the uncertainty argument of Section 2). Finally the common slope is positive, since the vertical wave factor $e^{-it\sigma y}$ must be inner in $\C_-$ for $t>0$; horizontal edges are excluded as transitions, and vertical strips have infinite vertical cross-sections.
	\end{proof}

	We expect the conclusion to hold without the restriction to finitely many directions.

	Together with the results of the previous subsection this supports the following statement, which is consistent with our computations in cyclic models and which we expect to be the complete answer.

	\begin{conjecture}[trichotomy]\label{Cdich}
		Let $S\subset\R^2$ be measurable, with $\partial S$ contained in a locally finite countable union of lines, and suppose $\H(2\chi_S-1)=2\chi_S-1$ in the spectral sense (equivalently, $\H\chi_S=\chi_S$ when the cross-sections have finite measure). Then, up to null sets and complementation, $S$ is a union of parallel strips of a single positive slope, or a fan of Proposition \ref{Pfan} (an odd number of concurrent lines, alternate sectors), or an affine image of the carry set of Theorem \ref{Tcarry}.
	\end{conjecture}

	The combinatorial half of the conjecture is settled in Section \ref{sec:SG}: by Theorem \ref{Tarr} and Corollary \ref{Carr}, the boundary arrangement of such a set (with the wave decomposition of Proposition \ref{Pchar}) is necessarily a family of parallel lines, a pencil of concurrent lines, or an affine image of the triangular arrangement of the carry set, and in the concurrent case $S$ is a fan. What remains open is the arithmetic statement that, on the triangular arrangement, the profiles must be the carry sawtooths.

	\begin{remark}[computational evidence]\label{Rcomp}
		In the cyclic model, the $\{0,1\}$-valued functions on $\Z_N^2$ of the form $A(i)+B(j)+C(i+j)$, with arbitrary real $A,B,C$, can be enumerated exhaustively. For $N=3$ all $56$ such patterns, and for $N=4$ all $140$, are accounted for by: carry patterns (with their translates, unit multiples and complements), one-directional patterns (discrete unions of strips), and, for $N=4$, hybrid patterns which are disjoint sums of a carry pattern and strips threaded through fibers where a fractional part vanishes \emph{exactly}. The hybrids are artifacts of the lattice: in the continuum the corresponding fibers have measure zero, and an equidistribution argument shows that no strip family can be disjoint from a carry set. Fans are not periodic and therefore do not appear in cyclic models. Scans over all small-integer resonant configurations of $4$ and $5$ directions with sawtooth profiles never produce a two-valued sum, in agreement with Proposition \ref{Pfour}; the fans, being finite sums of single-direction waves, are invariant by linearity alone, and only their two-valuedness needs checking. The unproved ingredients in the classification are Conjecture \ref{Lsturm} for $\mu\neq0$ and the determination of all bounded integer-valued solutions for $\mu=0$, where the fans occur; in the latter case Theorem \ref{Tarr} shows that a locally finite arrangement carrying step profiles is parallel, concurrent, or an affine triangular lattice, and the conjecture is that the third case does not occur with integer-valued profiles.
	\end{remark}

	\section{A combinatorial coda: Sylvester--Gallai duality}\label{sec:SG}

	The line-boundary classification has a purely combinatorial shadow, which connects it to configuration problems of Sylvester--Gallai type and, somewhat unexpectedly, to a recent example of B\'ar\'any, Du, Schwarz, Yuan and Zamfirescu \cite{BDSYZ}, which turns out to be precisely the projective dual of an affine copy of the triangular lattice (Proposition \ref{Pdual}). It also contains a complete classification of the locally finite line arrangements without ordinary points (Theorem \ref{Tarr} below), which settles the combinatorial half of Conjecture \ref{Cdich}.

	\begin{lemma}[local parity]\label{Lparity}
		Let $S$ be as in Conjecture \ref{Cdich}, with wave decomposition $2\chi_S-1=\sum g_d(\ell_d)$ as in Proposition \ref{Pchar}, so that each boundary family carries a sawtooth or a step whose jumps have a fixed orientation $\varepsilon_d=\pm 1$. If $k$ boundary lines meet at a point, then $k$ is odd, and the orientations $\varepsilon_d$ alternate with respect to the angular order of the slopes.
	\end{lemma}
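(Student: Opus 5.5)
We localize at the intersection point and reduce to the walk-around computation already made in the proof of Proposition \ref{Pfan}(i). Let $p$ be a point where exactly $k$ boundary lines meet, say $\ell_1,\dots,\ell_k$ after relabelling by increasing slope (all slopes are positive, since $b_d/a_d>0$). We normalise the functionals to be oriented alike, with $a_d>0$, absorbing any sign change into the orientation $\varepsilon_d$ of the corresponding profile. By local finiteness, a small disc around $p$ meets no other boundary line. So on this disc each wave $g_d(\ell_d)$ is either smooth across the disc, when its family has no line through $p$, or has a single jump across $\{\ell_d=\ell_d(p)\}$ of size $2\varepsilon_d j_d$ with $j_d>0$ and otherwise affine behaviour (sawtooth) or constant behaviour (step). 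Blowing up at $p$, $2\chi_S(p+\lambda(z-p))-1$ with $\lambda\to0^+$, kills all continuous parts and leaves
\[
G=c+\sum_{d=1}^k\varepsilon_d j_d\,\tmop{sign}\ell_d ,
\]
which is still $\{\pm1\}$-valued a.e., since a.e.\ convergence preserves the value set. Along boundary lines $2\chi_S-1$ jumps by $\pm2$, so $j_d=1$ for every line that is a genuine boundary line near $p$. Lines along which the total jump cancels are not in $\partial S$ and are discarded.

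Next we walk around $p$ on a small circle, exactly as in the proof of Proposition \ref{Pfan}(i). The gradients $\nabla\ell_d$ lie in one open quadrant in the angular order of the slopes. Hence the walk first crosses $\ell_1,\dots,\ell_k$ in the increasing direction, with jumps $2\varepsilon_1,\dots,2\varepsilon_k$, and then crosses them again in the same order in the decreasing direction, with jumps $-2\varepsilon_1,\dots,-2\varepsilon_k$. A $\{\pm1\}$-valued function on the circle must have jumps that alternate in sign, $+2,-2,+2,\dots$, because from value $1$ one can only jump down. Consecutive crossings within the first half force $\varepsilon_{d+1}=-\varepsilon_d$, so the orientations alternate in the angular order of slopes. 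At the junction between the two halves, the jump $2\varepsilon_k$ is followed by $-2\varepsilon_1$, and alternation requires $-\varepsilon_1=-\varepsilon_k$, i.e.\ $\varepsilon_1=\varepsilon_k$. Together with $\varepsilon_d=(-1)^{d-1}\varepsilon_1$ this gives $(-1)^{k-1}=1$, so $k$ is odd. The same parity follows from the fact that the total number of sign changes, $2k$, must be compatible with the two junctions. For $k$ even the two junctions carry consecutive jumps of equal sign and $G$ would take three values, as noted in Proposition \ref{Pfan}.

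The main obstacle is the reduction step: justifying that near $p$ each family contributes exactly one line through $p$ with jump of modulus $1$ and a fixed orientation. This requires that distinct families never contribute collinear lines, which holds since the functionals are pairwise non-parallel. It also requires that jumps of a single family along one line cannot be split, which holds since the wave decomposition of Proposition \ref{Pchar} is unique up to constants because the spectral lines are distinct. Finally, a line through $p$ on which the total jump vanishes is simply not a boundary line. We handle this by defining the local jump of $g_d$ at $\ell_d(p)$ through one-sided limits, which exist because the boundary is locally finite and the profiles are piecewise affine there. We then observe that any crossing line with zero jump is not counted among the $k$ boundary lines. With this in place, the circle walk above is purely combinatorial.
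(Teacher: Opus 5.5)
Your proposal is correct and takes essentially the same route as the paper. Like the paper, you orient the functionals alike, walk around a small circle at the intersection point, read off the cyclic jump sequence $(\varepsilon_1,\dots,\varepsilon_k,-\varepsilon_1,\dots,-\varepsilon_k)$ (up to your sign convention), and force alternation within each block together with $\varepsilon_1=\varepsilon_k$ at the junction, which gives $k$ odd. The blow-up and the localization paragraph are extra care that the paper leaves implicit; they do not change the argument.
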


	\begin{proof}
		Orient the functionals alike, replacing $g_d$ by $g_d(-\,\cdot)$ where necessary, and walk around a small circle centered at the intersection point. Since all slopes are positive, the gradients $\nabla\ell_d$ lie in one open quadrant, and the $2k$ crossings occur in angular order as all $k$ lines crossed in the increasing direction of their functionals, followed by the same $k$ lines crossed in the decreasing direction. The indicator changes by $-\varepsilon_d$ at an increasing crossing of family $d$ and by $+\varepsilon_d$ at a decreasing one. A two-valued function must alternate at consecutive jumps, so in the cyclic sequence $(-\varepsilon_1,\dots,-\varepsilon_k,\varepsilon_1,\dots,\varepsilon_k)$ consecutive entries have opposite signs: within a block, $\varepsilon_{i+1}=-\varepsilon_i$; at the junction, $\varepsilon_1=\varepsilon_k$. The two conditions force $(-1)^{k-1}=1$, i.e.\ $k$ odd, and the alternation of orientations.
	\end{proof}

	\begin{corollary}\label{Cordinary}
		No intersection point of the boundary arrangement lies on exactly two lines (``there are no ordinary points''). At a triple point the middle slope carries the opposite orientation; this is consistent with, and in fact predicts, the resonance of Proposition \ref{Pchar}: once the three functionals are oriented alike (which reflecting the profiles always arranges), the resonant one is a positive combination of the other two, and its level lines have the intermediate slope. The fans of Proposition \ref{Pfan} realize the minimal configurations: a single point of multiplicity $k$, for every odd $k$. See Figure \ref{fig:triple}.
	\end{corollary}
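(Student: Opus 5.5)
The corollary will be read off from Lemma \ref{Lparity}, together with an elementary slope computation for the triple point. The first assertion is immediate. At an intersection point lying on exactly two boundary lines we have $k=2$. Lemma \ref{Lparity} says that $k$ must be odd, so no such point exists. The only care needed is to check that "boundary line" in the lemma means a line on which the indicator genuinely jumps; otherwise a line carrying no jump could artificially create an ordinary point. Lines with no jump can be discarded from the arrangement from the start.

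For the triple point, I apply the alternation clause of Lemma \ref{Lparity} with $k=3$. With the functionals oriented alike and labeled by increasing slope, the lemma gives $\varepsilon_2=-\varepsilon_1$ and $\varepsilon_3=-\varepsilon_2$. Hence $\varepsilon_1=\varepsilon_3=-\varepsilon_2$, so the middle slope carries the opposite orientation. To connect this with Proposition \ref{Pchar}, I would write the three like-oriented gradients as $(-b_d,a_d)$ with $a_d>0$, all lying in one open quadrant. Because the lines are concurrent, the three gradients are linearly dependent. Any vector in the open cone spanned by two of them has intermediate angle, so the middle one equals $\lambda_1\nabla\ell_1+\lambda_3\nabla\ell_3$ with $\lambda_1,\lambda_3>0$.

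In the resonance $u+v$ of Proposition \ref{Pchar}, the resonant wave enters with the opposite sign relative to $g_1,g_2$: after removing the linear parts, the jumps are $-N_3$ versus $N_1,N_2$. Matching this with the opposite orientation on the middle slope identifies the resonant functional as the intermediate one. This is exactly the prediction stated, and it agrees with the remark after Proposition \ref{Pfan}. I would check it concretely on the carry set, where the slopes are $1,\tfrac12,\tfrac34$.

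The minimality statement follows from Proposition \ref{Pfan}(i). For every odd $k$, the fan has a single intersection point, of multiplicity $k$, and by Lemma \ref{Lparity} no configuration can have a point of even multiplicity. The main obstacle is the bookkeeping in the triple-point identification. One has to make sure that "reflecting the profiles" to orient the functionals alike is compatible with the sign conventions of Proposition \ref{Pchar}; reflecting $g_d$ flips both $\varepsilon_d$ and the sign of the corresponding functional. If this sign tracking becomes delicate, I would fall back on the local blow-up computation of Proposition \ref{Pfan}(iii) as the model case and argue by comparison with it.
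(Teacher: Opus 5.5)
Your overall route matches the paper's: the corollary is read off from Lemma \ref{Lparity}. Your treatment of the ordinary points ($k=2$ is even), of the triple point ($\varepsilon_1=\varepsilon_3=-\varepsilon_2$), and of the fans via Proposition \ref{Pfan} is fine. A minor point: any three vectors in the plane are linearly dependent. What concurrency gives is a relation $\ell_2=\lambda_1\ell_1+\lambda_3\ell_3$ between the affine functionals themselves, once the vertex is placed at the origin.

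The step that fails as written is the identification of ``the resonant functional'' by matching signs with Proposition \ref{Pchar}.
\begin{itemize}
\item In that proposition $u,v$ need not be oriented alike. The wave entering with the opposite sign in the labelling $u,v,u+v$ then need not have the middle slope.
\item The paper's own example following Proposition \ref{Pfan} shows this. For $u=y-x$ and $v=4x-2y$ one has $\chi_F=H(u)+H(v)-H(u+v)$ a.e., and the third profile carries the opposite sign. Yet $u+v=3x-y$ has the largest slope $3$.
\item After orienting alike, it is $-v=2y-4x=(y-x)+(y-3x)$, of slope $2$, that is the sum and carries the opposite orientation.
\end{itemize}
Your jump bookkeeping is also off. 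Every $g_d$ jumps by $-\Delta N_d$. For $\mu\neq0$ the sign asymmetry comes from the boundedness of $N_{1,2}-\mu x$ versus $N_3+\mu x$. For $\mu=0$, Proposition \ref{Pchar} forces no asymmetry at all.

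The repair is already contained in your cone argument:
\begin{itemize}
\item Orient the three functionals alike. The relation $c_1\ell_1+c_2\ell_2+c_3\ell_3=0$ is unique up to scale, and all $c_d\neq0$ because the directions are pairwise distinct.
\item The $c_d$ cannot all share one sign, because the $y$-coefficients are all positive.
\item Hence exactly one functional is a positive combination of the other two. By your cone argument it is the one of intermediate slope, and by Lemma \ref{Lparity} it is the one of opposite orientation.
\end{itemize}
That is the precise meaning of ``the resonant one''. No sign tracking through Proposition \ref{Pchar}, and no fallback to the blow-up, is needed.
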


	\begin{figure}[htb]
		\centering
		\includegraphics[width=.6\textwidth]{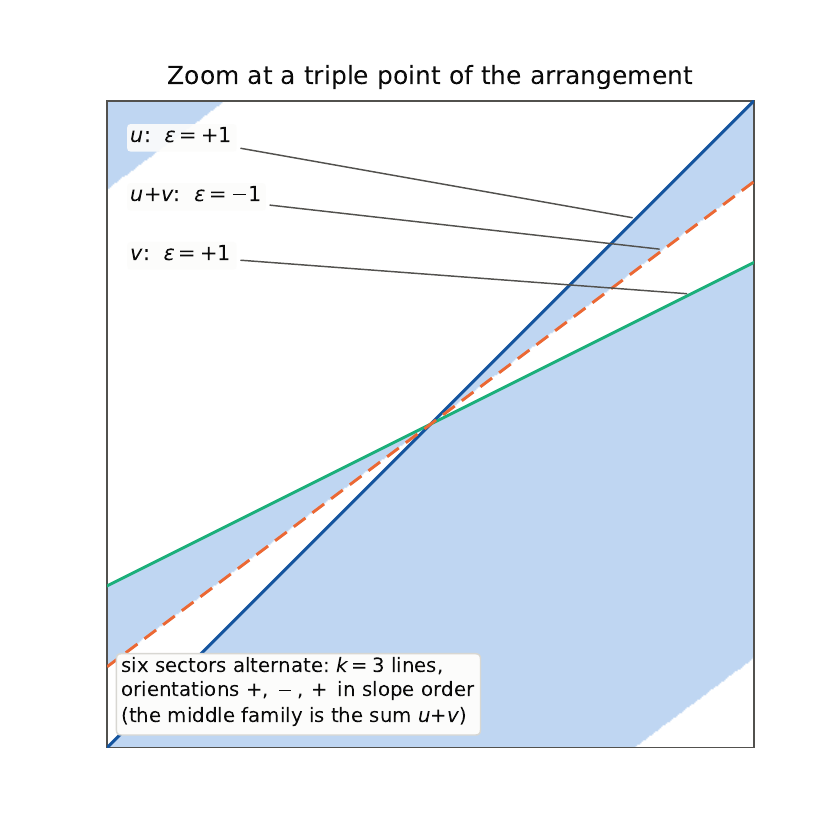}
		\caption{The set of Theorem \ref{Tcarry} near a triple point of its boundary arrangement: six sectors with alternating membership, $k=3$ lines, jump orientations $+,-,+$ in slope order, the middle family (the resonant sum $u+v$) inverted -- the local picture of Lemma \ref{Lparity} and, globally, the three-line fan of Proposition \ref{Pfan}.}
		\label{fig:triple}
	\end{figure}

	\subsection{Locally finite arrangements without ordinary points}

	For locally finite arrangements, the absence of ordinary points is not merely a necessary condition: it determines the arrangement completely. Throughout this subsection, $\mathcal A$ is a family of distinct lines in the plane, \emph{locally finite} in the sense that every disc meets finitely many lines of $\mathcal A$; a \emph{vertex} of $\mathcal A$ is a point lying on at least two of its lines, an \emph{ordinary} vertex lies on exactly two, and the \emph{faces} of $\mathcal A$ are the connected components of the complement of $\bigcup\mathcal A$, which are open convex polygonal regions. For the history of Sylvester's problem and its relatives see \cite{BM}.

	\begin{theorem}[arrangements without ordinary points]\label{Tarr}
		Let $\mathcal A$ be a locally finite family of at least two distinct lines in the plane with no ordinary vertex. Then exactly one of the following holds:

		\emph{(i)} the lines of $\mathcal A$ are all parallel;

		\emph{(ii)} the lines of $\mathcal A$ all pass through one point;

		\emph{(iii)} some affine map of the plane takes $\mathcal A$ onto the triangular arrangement
		$$\mathcal A_\triangle=\{x=m\}\cup\{y=n\}\cup\{x+y=k\},\qquad m,n,k\in\Z .$$

		\noindent Conversely, each of \emph{(i)--(iii)} has no ordinary vertex. In case \emph{(iii)} every vertex of $\mathcal A$ lies on exactly three lines and every face of $\mathcal A$ is a triangle.
	\end{theorem}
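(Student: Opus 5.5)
The plan is to prove the converse first and then the main direction in two stages: a local stage showing that, outside cases (i) and (ii), every vertex is a triple point and every face is a triangle, and a global stage identifying such an arrangement with an affine image of $\mathcal A_\triangle$.

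\emph{Converse.} In (i) there are no vertices. In (ii) there is one vertex, lying on all of the at least two lines; at least three lines are needed for it not to be ordinary, or else (ii) must be read with two lines excluded. In (iii), every lattice point $(m,n)$ lies on exactly the three lines $x=m$, $y=n$, $x+y=m+n$, and any two non-parallel lines of $\mathcal A_\triangle$ meet at a lattice point. The faces of $\mathcal A_\triangle$ are the lattice triangles with vertices $(m,n),(m+1,n),(m,n+1)$ or $(m+1,n),(m,n+1),(m+1,n+1)$, and an affine map preserves these properties. Exclusivity is clear from the number of directions and vertices in each case.

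\emph{Local stage.} Assume $\mathcal A$ is neither parallel nor concurrent. By local finiteness each vertex $v$ lies on finitely many lines, at least three, and the faces at $v$ are the $2k$ sectors cut out near $v$. The core step is to show that every face $F$ is a bounded triangle. Pick a vertex $v$ of $F$, with edges $e_1\subset L_1$ and $e_2\subset L_2$, and let $M$ be the line through $v$ adjacent to $L_1$ on the side away from $F$.
\begin{itemize}
\item Run along $e_1$ to its far endpoint $w$. This endpoint exists; otherwise $L_1$ would meet no other line beyond $v$. That would force every other line to be parallel to $L_1$ or to pass through $v$, and then no ordinary point, together with the non-concurrent hypothesis, gives a contradiction.
\item The point $w$ is a vertex with at least three lines. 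One line through $w$ bounds $F$. Another enters the face $G$ across $e_1$, or, by choosing a nearest crossing, produces an ordinary vertex with $M$ or $L_2$.
\end{itemize}
This is a Kelly-type nearest-crossing argument. I would choose, along $L_1$, the crossing of $M$ or $L_2$ nearest to $v$, which local finiteness guarantees exists. I would then show that a face with four or more sides, or an unbounded face, forces some line through $w$ to meet $M\cup L_2$ at a point lying on no third line. Once every face is a triangle, a vertex with $k\ge 5$ lines is excluded: consecutive sector triangles at $v$ would force the far sides to meet at points where exactly two lines cross. So every vertex is a triple point.

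\emph{Global stage.} At a triple point, the six triangles around $v$ and the triangles around its six neighbours determine the lines through those neighbours. Triangularity then forces each line through a neighbour to be parallel to the corresponding line through $v$. For example, the far sides of the two triangles in opposite sectors lie on parallel lines, because the triangle on the common edge is shared. Propagating this along the connected dual graph of faces shows that $\mathcal A$ has exactly three parallel classes, with consecutive lines in each class equally spaced. Normalize by an affine map so that the lines through one vertex become $x=0$, $y=0$ and $x+y=0$, and so that the adjacent triangle is the standard one. The propagation then identifies $\mathcal A$ with $\mathcal A_\triangle$.

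The main obstacle is the local claim that no face has four or more sides or is unbounded. The classical Sylvester--Gallai argument uses a global minimum of vertex--line distances, which need not exist for infinite arrangements. I will try to replace it with a minimum taken only over the finitely many crossings in a fixed bounded window around $F$, which local finiteness makes available. If that fails, the fallback is a truncated Euler/Melchior count over large discs. That fallback would require showing that the boundary terms are dominated by the excess $\sum_v(k_v-3)+\sum_F(\#\text{sides}-3)$, and I do not currently see how to control that.
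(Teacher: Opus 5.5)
Your converse is fine, and you are right that case (ii) needs at least three lines for its vertex not to be ordinary. The gap is the local stage, which carries essentially all the content of the theorem and is not proved.

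\begin{itemize}
\item The claim that every face is a bounded triangle rests on a ``Kelly-type nearest-crossing argument'' that you only announce. The fallback you mention, an Euler/Melchior count over large discs, you abandon yourself because of its boundary terms.
\item The one concrete step you give is incorrect. If the edge $e_1$ is a ray with no vertex beyond $v$, it does not follow that every other line is parallel to $L_1$ or passes through $v$: a line may cross $L_1$ on the opposite side of $v$. Excluding unbounded faces is part of what must be proved, not a consequence of local finiteness.
\item You only exclude vertices with $k\ge5$ lines. The hypothesis of Theorem \ref{Tarr} allows quadruple points (there is no parity assumption here), so $k=4$ must be excluded too.
\item The reason you offer even for $k\ge5$, that far sides would meet at ordinary points, is not substantiated.
\item Your global stage takes exactly these local facts as input, so it is not yet supported either.
\end{itemize}

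The idea that closes the gap is to run the Euler count over a triangle $T$ whose three sides lie on lines of $\mathcal A$, not over discs. Such a triangle exists by a short case analysis whenever $\mathcal A$ is neither parallel nor concurrent (Lemma \ref{Lextri}).

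Inside $T$, the no-ordinary-vertex hypothesis also controls the boundary. A non-corner vertex on a side of $T$ lies on at least two further lines, both of which enter $T$, so it has degree at least $4$. Interior vertices have degree at least $6$ and corners at least $2$. This gives $2E\ge 6k+4l+6$. On the other hand, Euler's formula $V-E+F=1$, together with every face having at least three edges, gives $E\le 3k+2l+3$. Equality is forced, so inside $T$ every face is a triangle and every vertex is a triple point, with no error term at all.

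To leave $T$, apply the same count once more. Suppose a line $\ell$ through a corner $B$ of a triangular face $ABC$ were not parallel to $AC$. It would meet the line $AC$ at a point $D$ outside $[A,C]$. In the line-bounded triangle $DBC$, the corner $B$ carries a third edge along $BA$, which raises the lower bound to $6k'+4l'+7$ and contradicts the upper bound.

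Hence exactly three lines pass through $B$, the third parallel to $AC$; this disposes of $k=4$ and $k\ge5$ at once. The face across $BC$ is then the triangle $BCA'$ with $A'=B+C-A$. Your propagation along chains of adjacent faces now goes through: every face is a translate of $\pm F_0$, which gives three parallel classes with equal spacing. Alternatively, as in the paper, the relations $A+B\subset C$, $C-A\subset B$, $C-B\subset A$ for the intercept sets force arithmetic progressions with a common step.
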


	Case (ii) cannot be dispensed with -- the arrangements of the fans of Proposition \ref{Pfan} satisfy the hypotheses -- and this is why local finiteness and the absence of ordinary vertices alone do not force triangular faces. The converse direction is trivial for (i) and (ii) and is the computation of Remark \ref{Rgeom} for (iii): in $\mathcal A_\triangle$, the lines $x=m$ and $y=n$ meet on $x+y=m+n$, the lines $x=m$ and $x+y=k$ meet on $y=k-m$, and the lines $y=n$ and $x+y=k$ meet on $x=k-n$. Note also that in cases (i) and (ii) the arrangement may be finite, while in case (iii) it is necessarily infinite -- in accordance with the Sylvester--Gallai theorem, which forbids finite examples of this kind. The theorem thus complements Sylvester--Gallai in the infinite, locally finite setting; in dual form it also identifies the bounded counterexample of \cite{BDSYZ}, Theorem 2.2, as essentially the only one of its kind (Proposition \ref{Pdual} and Corollary \ref{Cdual} below), while the non-locally-finite setting -- dense arrangements such as the dual of $\mathbb Q^2$ -- is genuinely different.

	For the proof, assume that $\mathcal A$ is locally finite with no ordinary vertex and that its lines are neither all parallel nor all concurrent; we must produce the affine map of (iii). Call a closed triangle \emph{line-bounded} if its three sides lie on three lines of $\mathcal A$.

	\begin{lemma}[a triangle exists]\label{Lextri}
		Some three lines of $\mathcal A$ bound a triangle.
	\end{lemma}

	\begin{proof}
		Choose non-parallel $m_1,m_2\in\mathcal A$ and let $P=m_1\cap m_2$. Since not all lines pass through $P$, there is $m_3\in\mathcal A$ with $P\notin m_3$. If $m_3$ is parallel to neither $m_1$ nor $m_2$, then $m_1,m_2,m_3$ are pairwise non-parallel and not concurrent (a common point would be $P\in m_3$), so they bound a triangle. Otherwise, say $m_3\parallel m_1$; then $Q=m_3\cap m_2$ is a vertex, $Q\neq P$, so a third line $m_4\in\mathcal A$ passes through $Q$, $m_4\notin\{m_2,m_3\}$. We cannot have $m_4\parallel m_1$, since $m_3$ is the line through $Q$ parallel to $m_1$. Hence $m_1,m_2,m_4$ are pairwise non-parallel and not concurrent: a common point would be $P$, and $P,Q\in m_4$ would give $m_4=m_2$. So $m_1,m_2,m_4$ bound a triangle.
	\end{proof}

	\begin{lemma}[counting]\label{Lcount}
		Let $T$ be a line-bounded triangle, and let $G$ be the planar graph cut out on $T$ by the lines of $\mathcal A$: its vertices are the vertices of $\mathcal A$ interior to $T$, the points where lines of $\mathcal A$ cross $\partial T$, and the three corners; its edges are the segments of lines of $\mathcal A$ and of $\partial T$ between consecutive vertices; its faces are the components of the interior of $T$ minus the lines. Let $k$ and $l$ be the numbers of interior and of non-corner boundary vertices, and $E$ the number of edges. Then
		$$E=3k+2l+3,$$
		and consequently: \emph{(a)} every face of $G$ is a triangle; \emph{(b)} every interior vertex lies on exactly three lines of $\mathcal A$; \emph{(c)} every non-corner boundary vertex lies on exactly three lines of $\mathcal A$, the side and two lines crossing into $T$; \emph{(d)} no line of $\mathcal A$ other than the two sides enters the interior of $T$ at a corner.
	\end{lemma}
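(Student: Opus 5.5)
The plan is to squeeze $E$ between an upper bound from Euler's formula and a lower bound from the degree sum, and then read (a)--(d) off the equality cases.

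\emph{Setup.} Since $T$ is compact and $\mathcal A$ is locally finite, only finitely many lines meet $T$, so $G$ is a finite plane graph. It is connected, because every line segment inside $T$ ends on $\partial T$, which is a cycle. Its vertex count is $V=k+l+3$. Let $F$ be the number of bounded faces, i.e.\ the faces inside $T$. Euler's formula $V-E+F=1$ gives
$$F=E-k-l-2 .$$

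\emph{Upper bound from faces.} Each face is an open convex polygon, so its boundary walk uses at least $3$ edges of $G$. Collinear subdivision points only increase this count. Each edge interior to $T$ lies on two faces, and each of the $l+3$ boundary edges lies on one face. Hence
$$3F\le 2E-(l+3).$$
Substituting $F$ gives $3E-3k-3l-6\le 2E-l-3$, that is, $E\le 3k+2l+3$.

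\emph{Lower bound from degrees.}
\begin{itemize}
\item An interior vertex on $r$ lines has degree $2r$. Since there are no ordinary vertices, $r\ge 3$, so the degree is at least $6$.
\item A non-corner boundary vertex lies on its side and on $s$ further lines. Each such line, being transversal to the side, contributes exactly one edge into $T$, so the degree is $2+s$. The point is a vertex of $\mathcal A$, so $s+1\ge 3$ and the degree is at least $4$.
\item A corner has degree $2+c$, where $c\ge 0$ is the number of further lines entering the interior of $T$ there.
\end{itemize}
Therefore $2E\ge 6k+4l+6$, so $E\ge 3k+2l+3$.

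\emph{Equality and consequences.} Combining the two bounds gives $E=3k+2l+3$, and every inequality above must be an equality. Equality in the face count means every face has exactly three edges of $G$: it is a triangle with no subdivision points on its sides, giving (a). Equality in the degree count forces $r=3$ at interior vertices, giving (b); $s=2$ at boundary vertices, giving (c); and $c=0$ at corners, giving (d).

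\emph{Main obstacle.} The only delicate point is the degree bookkeeping on $\partial T$. One must check that a line through a non-corner boundary point contributes exactly one edge, which holds because it is transversal to the side, not a side itself. At a corner, the two sides are already counted, and any additional line either enters the interior (counted by $c$) or passes outside $T$ (contributing nothing).
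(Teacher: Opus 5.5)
Your proposal is correct and is essentially the paper's proof: the upper bound $E\le 3k+2l+3$ comes from Euler's formula $V-E+F=1$ combined with the face-degree count $3F\le 2E-(l+3)$, the lower bound comes from the degree sum $2E\ge 6k+4l+6$, and (a)--(d) are read off from the forced equalities. Your degree bookkeeping ($2r$ at interior vertices, $2+s$ at non-corner boundary vertices, $2+c$ at corners) matches the paper's, and the transversality check at non-corner boundary vertices is the same point the paper verifies.
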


	\begin{proof}
		If $V$ is a non-corner boundary vertex, every line of $\mathcal A$ through $V$ other than the side enters the interior of $T$: it crosses the side transversally at $V$, and a half-neighborhood of $V$ on the inner side lies in $T$. Since $V$ lies on at least three lines, at least two of them enter, so $\deg V\geq4$ (two boundary edges and at least two interior ones). Interior vertices lie on at least three lines, each contributing two edges, so their degree is at least $6$; corners have degree at least $2$. With $V=k+l+3$ vertices in total, summing the degrees gives
		\begin{equation}\label{arrlower}
			2E\geq 6k+4l+6 .
		\end{equation}
		The graph is connected: every edge lies on a segment of a line inside $T$ whose endpoints lie on the connected set $\partial T$. Since $T$ is a disc, Euler's formula gives $V-E+F=1$ for the number $F$ of faces, so $F=E-k-l-2$. Interior edges border two faces, the $l+3$ boundary edges border one, so the face degrees add up to $2E-(l+3)$; every face is a convex polygon with at least three boundary edges, whence
		\begin{equation}\label{arrupper}
			3(E-k-l-2)\leq 2E-(l+3),\qquad\text{i.e.}\qquad E\leq 3k+2l+3 .
		\end{equation}
		Comparing \eqref{arrlower} and \eqref{arrupper} gives $E=3k+2l+3$, and every inequality used must be an equality: equality in \eqref{arrupper} is (a), and equality in \eqref{arrlower} gives degrees exactly $6$, $4$, $2$, which is (b), (c), (d).
	\end{proof}

	Applying Lemma \ref{Lcount} to the triangle of Lemma \ref{Lextri}, we see that $\mathcal A$ has at least one triangular face.

	\begin{lemma}[sides of a triangular face]\label{Lsides}
		Let $F$ be a triangular face of $\mathcal A$ with closure the triangle $ABC$. Then $A,B,C$ are the only vertices of $\mathcal A$ on $\partial F$; each side of $ABC$ is a single edge of the arrangement, and the face adjacent to $F$ along a side shares that entire side.
	\end{lemma}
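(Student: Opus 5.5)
The plan is to apply Lemma \ref{Lcount} to the closed triangle $ABC$ itself, which is line-bounded because its three sides lie on lines of $\mathcal A$.

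First observe that $F$ is a face, so no line of $\mathcal A$ meets its interior. The graph $G$ that $\mathcal A$ cuts out on $ABC$ therefore has $k=0$ interior vertices. It also has no interior edges: an edge of $G$ either lies on $\partial T$ or is a segment of a line of $\mathcal A$ inside $T$, and the latter would pass through the interior. Thus every edge of $G$ is a boundary edge. With $l$ non-corner vertices on $\partial T$ there are exactly $l+3$ boundary edges, so $E=l+3$. Lemma \ref{Lcount} gives $E=3k+2l+3=2l+3$, hence $l=0$.

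The same conclusion can be seen directly from part (c). Any non-corner boundary vertex $V$ lies on two lines of $\mathcal A$ other than the side, and both of these cross into the interior of $T$, which is the open face $F$. That is impossible. So $A,B,C$ are the only vertices of $\mathcal A$ on $\partial F$, and each side, say $AB$, contains no vertex in its relative interior. Hence $AB$ is a single edge of the arrangement.

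For the last claim, take the side $AB$, lying on the line $m\in\mathcal A$, and let $F'$ be the face on the other side of $m$ adjacent to the open segment $(A,B)$. A point of $(A,B)$ lies on no line of $\mathcal A$ other than $m$, since it is not a vertex. By local finiteness, the open segment $(A,B)$ therefore has a neighborhood in $\R^2\setminus\bigcup(\mathcal A\setminus\{m\})$.

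I need a connectedness argument to show that the other side of the whole open segment lies in one face. The union of lines other than $m$ is closed and disjoint from $(A,B)$. For each point of $(A,B)$ choose a small half-disc on the far side of $m$ avoiding all other lines. The union of these half-discs is connected, because consecutive half-discs overlap along the segment and $(A,B)$ is connected. It is also disjoint from $\bigcup\mathcal A$, so it lies in a single face $F'$. Therefore $\partial F'$ contains the entire side $AB$.

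The main (mild) obstacle is to be careful that Lemma \ref{Lcount} applies even though $T$ has no interior structure, and that the counting of boundary edges is correct. The direct use of part (c) avoids the counting altogether.
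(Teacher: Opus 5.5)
Your proposal proves the first two claims correctly but establishes the last clause only in a weaker sense than the paper intends.

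\textbf{First two claims.} Applying Lemma~\ref{Lcount} to $T=\overline F$ is legitimate: there are no interior vertices or edges, so $E=l+3$, and comparing with $E=2l+3$ forces $l=0$. This is heavier than necessary, though. Your ``direct'' version via part (c) is essentially the paper's argument. The paper uses only that a vertex $V$ in the relative interior of a side lies on \emph{one} further line of $\mathcal A$. That line crosses the side transversally at $V$ and therefore enters $F$. Neither the counting nor the absence of ordinary vertices is needed here.

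\textbf{The gap: the last clause.} What you prove is that the far side of the whole open edge $(A,B)$ lies in a single face $F'$, so $[A,B]\subset\partial F'$. The paper takes this for granted when it speaks of ``the face adjacent to $F$ along $BC$''; your connectedness argument fills that in, which is fine. What the paper actually proves, and what Lemma~\ref{Lrefl} later invokes (``its closure contains the segment $BC$ with $B$ and $C$ as corners''), is stronger. It is that the side of $F'$ lying on $m$ is exactly $[A,B]$ and does not extend beyond $A$ or $B$. Your argument does not exclude, say, that $B$ lies in the relative interior of a longer side of $F'$.

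\textbf{The fix.} The missing step is one line, by the same crossing argument. Suppose $B$ were in the relative interior of the side of $F'$ on $m$. Then $F'$, being an open convex polygon, would contain a small half-disc centered at $B$ on the far side of $m$. The line $BC\neq m$ crosses $m$ transversally at $B$, so it would enter $F'$, which is impossible for a face. The same argument works at $A$ with the line $AC$.

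In the application in Lemma~\ref{Lrefl}, $F'$ is already known to be a triangle. There, your weaker statement combined with the first part of the lemma applied to $F'$ would also make $B$ and $C$ corners. As a proof of the lemma as intended, however, the step should be added.

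\textbf{A minor point.} Open half-discs on the far side of $m$ do not overlap ``along the segment.'' Say instead that half-discs at nearby points $p,q\in(A,B)$ share points close to $q$. Then the set of $q$ whose half-disc lies in a fixed component is open and closed in $(A,B)$, hence all of $(A,B)$.
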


	\begin{proof}
		If a vertex $V$ lay in the relative interior of the side $BC$, a line of $\mathcal A$ through $V$ other than the line $BC$ would cross it transversally at $V$ and enter $F$, which no line may do. Hence the sides are edges. If $F'$ is the face adjacent to $F$ along $BC$, the side of $F'$ containing the edge $BC$ cannot be longer, since then $B$ or $C$ would be a vertex interior to a side of $F'$, impossible by the same argument.
	\end{proof}

	\begin{lemma}[the third line]\label{Lthird}
		Let $F$ be a triangular face with vertices $A,B,C$. Then every line of $\mathcal A$ through $B$ other than the lines $AB$ and $BC$ is parallel to $AC$; consequently exactly three lines of $\mathcal A$ pass through $B$, namely $AB$, $BC$ and one line $\ell_B\parallel AC$.
	\end{lemma}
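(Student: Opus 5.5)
The plan is to argue by contradiction, using the rigidity statement (d) of Lemma \ref{Lcount} applied to a suitably enlarged line-bounded triangle. Suppose $m\in\mathcal A$ passes through $B$, with $m\neq AB$, $m\neq BC$, and $m$ not parallel to $AC$. Then $m$ meets the line $AC$ in a single point $P$. Since $m$ passes through $B$ and is different from $AB$ and $BC$, we have $P\neq A$ and $P\neq C$.

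\emph{Step 1: $P$ lies outside the closed segment $AC$.} A line through the corner $B$ that is distinct from $AB$ and $BC$ either enters the open angle $\angle ABC$ or avoids both it and its vertical angle. If it enters the open angle, it enters every neighborhood of $B$ inside $F$. This is impossible, because $F$ is a face and so no line of $\mathcal A$ meets it. Hence $m$ avoids the open angle $\angle ABC$, and its intersection with the line $AC$ cannot lie on the open segment $AC$. Together with $P\neq A,C$, this shows that $P$ lies on the line $AC$ strictly beyond $C$ or strictly beyond $A$. By the symmetry $A\leftrightarrow C$ it suffices to treat the case where $P$ lies beyond $C$, so that $C$ is in the relative interior of the segment $AP$.

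\emph{Step 2: the enlarged triangle.} The three lines $AB$, $AC$ and $m$ are pairwise non-parallel and are not concurrent: $B\notin AC$, and $m\cap AB=\{B\}$. Hence they bound the line-bounded triangle $T=ABP$, which contains $F$. The line $BC$ is a line of $\mathcal A$ through the corner $B$ of $T$, and it is not one of the two sides $AB$, $BP=m$ that meet at $B$. Since $C$ is an interior point of the side $AP$, the open segment $BC$ lies in the interior of $T$. Thus $BC$ enters the interior of $T$ at the corner $B$, contradicting Lemma \ref{Lcount}(d). In the case where $P$ lies beyond $A$, the same argument applies to the triangle $CBP$, with the line $BA$ entering it at the corner $B$. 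This proves the first assertion.

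\emph{Step 3: exactly one further line.} $B$ is a vertex of $\mathcal A$, and $\mathcal A$ has no ordinary vertex, so at least one line $\ell_B\in\mathcal A$ other than $AB$ and $BC$ passes through $B$. By the first assertion, every such line is the unique line through $B$ parallel to $AC$. Hence exactly three lines of $\mathcal A$ pass through $B$.

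The step needing the most care is Step 1: it must be justified that a line through $B$ which does not meet $F$ cuts the line $AC$ outside the closed segment. The cleanest way is to parametrize the lines through $B$ by their direction. The directions pointing into $\angle ABC$ are exactly those meeting the open segment $AC$, and all of them meet $F$ near $B$. The degenerate intersections $P=A$ and $P=C$ are already excluded by $m\neq AB,BC$.
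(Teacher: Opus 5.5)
Your proof is correct and follows the paper's argument. You rule out an intersection point on the open segment because the line would cut the face, then pass to the enlarged line-bounded triangle, in which the other side through $B$ enters the interior at the corner. The only difference is that you cite Lemma~\ref{Lcount}(d) directly, whereas the paper re-runs the edge count for the enlarged triangle; that count is precisely the proof of (d).
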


	\begin{proof}
		Let $\ell\in\mathcal A$ pass through $B$, $\ell\neq AB, BC$, and suppose $\ell\not\parallel AC$; let $D$ be the intersection of $\ell$ with the line $AC$. If $D$ lay in the open segment $(A,C)$, the segment $BD$ would cross the interior of the face $F$; and $D=A$ or $D=C$ would force $\ell=AB$ or $\ell=BC$. Hence $D$ lies outside $[A,C]$, and, interchanging the labels $A$ and $C$ if necessary, we may assume that $A$ lies strictly between $D$ and $C$. The triangle $T'$ with corners $D,B,C$ is line-bounded, with sides on $\ell$, $AC$ and $BC$. The upper bound \eqref{arrupper}, which used only Euler's formula and convexity of the faces, gives $E'\leq 3k'+2l'+3$ for its subdivision. In the lower bound, the corner $B$ now supports at least \emph{three} edges: the boundary edges toward $D$ and toward $C$, and the edge along the segment $BA$, which lies inside $T'$ because $A$ is an interior point of the side $DC$. Hence
		$$2E'\geq 6k'+4l'+(2+2+3)=6k'+4l'+7,$$
		so $E'\geq 3k'+2l'+\tfrac72$, a contradiction. Therefore $\ell\parallel AC$. At most one line through $B$ is parallel to $AC$, and $B$ lies on at least three lines; so exactly three.
	\end{proof}

	\begin{lemma}[reflected neighbor]\label{Lrefl}
		Let $F$ be a triangular face with vertices $A,B,C$, and let $\ell_B\parallel AC$, $\ell_C\parallel AB$ be the third lines through $B$ and $C$. Then the face adjacent to $F$ along $BC$ is the open triangle with vertices $B$, $C$ and $A'=\ell_B\cap\ell_C$, the reflection of $A$ in the midpoint of $BC$. In particular it is a triangular face with sides parallel to those of $F$.
	\end{lemma}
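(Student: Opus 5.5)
The plan is to locate the adjacent face through its two sides at $B$ and $C$, using Lemmas \ref{Lsides} and \ref{Lthird}. Let $F'$ be the face adjacent to $F$ along $BC$. By Lemma \ref{Lsides}, $F'$ contains the whole segment $BC$ as one of its sides, and $B$ and $C$ are vertices of $F'$.

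First I will determine the two sides of $F'$ that meet at $B$ and at $C$. By Lemma \ref{Lthird}, exactly three lines pass through $B$: $AB$, $BC$ and $\ell_B\parallel AC$. These three lines cut a small neighborhood of $B$ into six sectors. $F$ occupies the sector between the rays $BA$ and $BC$, and $F'$ occupies the sector across $BC$. The sector of $F'$ at $B$ is therefore bounded by the ray $BC$ and by the next ray in angular order on the far side of $BC$. That ray is not on $AB$, because the opposite ray of $AB$ lies further around. So the next ray lies on $\ell_B$, namely the ray of $\ell_B$ pointing to the side of $BC$ opposite to $A$. The same argument at $C$ shows that $F'$ is bounded near $C$ by a ray of $\ell_C$ on that same side.

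The one point that needs checking is the angular order. Since $\ell_B\parallel AC$, the direction of $\ell_B$ from $B$ toward the side opposite $A$ is $\overrightarrow{CA}$ or $\overrightarrow{AC}$, whichever points away from $A$ across $BC$. This lies strictly between the ray $BC$ and the extension of $AB$ beyond $B$. I will verify this with a short computation in affine coordinates $A=(0,0)$, $B=(1,0)$, $C=(0,1)$. There $\ell_B=\{x=1\}$ and $\ell_C=\{y=1\}$, and the half-plane opposite $A$ is $x+y>1$. The claim becomes that the upward ray from $(1,0)$ lies between the ray toward $(0,1)$ and the ray toward $(2,0)$, which is clear.

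With this in hand, the two rays, from $B$ along $\ell_B$ and from $C$ along $\ell_C$, meet at $A'=(1,1)=B+C-A$. This is the reflection of $A$ in the midpoint of $BC$. Since $F'$ is convex, it is contained in the triangle $BCA'$.

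The main obstacle is to show that $F'$ is the whole triangle $BCA'$, that is, that no line of $\mathcal A$ crosses the open triangle $BCA'$, or cuts a corner of it before $A'$. I would handle this with the counting Lemma \ref{Lcount} applied to the line-bounded triangle $T=BCA'$, whose sides lie on $BC$, $\ell_B$ and $\ell_C$. Part (d) of that lemma forbids any line entering the interior of $T$ at the corners $B$ or $C$, which is consistent with the above. To rule out lines inside $T$, I will use the face structure. By Lemma \ref{Lcount}(a) every face of the subdivision of $T$ is a triangle, and $F'$ is one of them with $BC$ as a full side. Its third vertex $V$ therefore lies on the rays from $B$ along $\ell_B$ and from $C$ along $\ell_C$, because the sides of $F'$ at $B$ and $C$ lie on those lines. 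These two rays meet only at $A'$, so $V=A'$.

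Thus $F'$ is the open triangle $BCA'$, a triangular face with sides on lines parallel to $AC$, $AB$ and $BC$, as Lemma \ref{Lrefl} claims.
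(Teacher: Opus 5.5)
Your proof is correct and follows essentially the paper's argument. Lemma \ref{Lsides} makes $BC$ a full side of $F'$ with corners $B,C$, Lemma \ref{Lcount}(a) applied to the line-bounded triangle $BCA'$ makes $F'$ a triangle, and Lemma \ref{Lthird} pins its third corner at $\ell_B\cap\ell_C=A'$. You differ only in two local steps. You identify the sides of $F'$ at $B$ and $C$ by the angular order of the three lines there, where the paper uses elimination ($AB\cap AC=A$ is on the near side, $AB\parallel\ell_C$, $\ell_B\parallel AC$). You get $F'\subset BCA'$ from convexity, where the paper uses that $\partial(BCA')$ lies on lines of $\mathcal A$.
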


	\begin{proof}
		The point $A'=B+C-A$ lies on the opposite side of the line $BC$ from $A$, so the triangle $T'$ with corners $B,C,A'$ is line-bounded (sides on $BC$, $\ell_B$, $\ell_C$) and lies on that side. Let $F'$ be the face adjacent to $F$ along the edge $BC$. Near an interior point of $BC$, a half-disc on the far side lies in $T'$, so $F'$ meets the interior of $T'$; since $\partial T'$ lies on lines of $\mathcal A$, in fact $F'\subset T'$. By Lemma \ref{Lcount} applied to $T'$, $F'$ is a triangle, and by Lemma \ref{Lsides} its closure contains the segment $BC$ with $B$ and $C$ as corners. The remaining two sides of $F'$ lie on lines through $B$ and through $C$ other than $BC$, that is, on one of $AB,\ell_B$ and one of $AC,\ell_C$ (Lemma \ref{Lthird}), and the third corner is their intersection, on the far side of $BC$. Now $AB\cap AC=A$ lies on the near side, while $AB\parallel\ell_C$ and $\ell_B\parallel AC$ do not intersect; the only possibility is $\ell_B\cap\ell_C=A'$.
	\end{proof}

	\begin{proof}[Proof of Theorem \ref{Tarr}]
		Let $F_0$ be a triangular face and $\alpha,\beta,\gamma$ the directions of its sides. Any face $F_*$ can be joined to $F_0$ by a segment avoiding the locally finite set of vertices; the segment crosses finitely many lines, so there is a finite chain of faces from $F_0$ to $F_*$ in which consecutive faces are adjacent along edges. By Lemma \ref{Lrefl} and induction along the chain, every face of $\mathcal A$ is a triangle with sides of directions $\alpha,\beta,\gamma$. If $\ell\in\mathcal A$ and $x\in\ell$ is not a vertex, then $x$ lies on a common boundary edge of two faces, and this edge lies on $\ell$; hence every line of $\mathcal A$ has one of the three directions, and $\mathcal A$ splits into three nonempty families of parallel lines. By Lemma \ref{Lthird}, every vertex lies on exactly three lines, one from each family.

		Applying an affine map, we may assume the three directions are those of $x=0$, $y=0$ and $x+y=0$, so that
		$$\mathcal A=\{x=a:\ a\in A\}\cup\{y=b:\ b\in B\}\cup\{x+y=c:\ c\in C\}$$
		with locally finite $A,B,C\subset\R$. The absence of ordinary vertices says precisely that
		\begin{equation}\label{arrsum}
			A+B\subset C,\qquad C-A\subset B,\qquad C-B\subset A
		\end{equation}
		(the lines $x=a$ and $y=b$ meet at $(a,b)$, which must lie on a line of the third family, so $a+b\in C$, and similarly for the other two pairs). Each family contains at least two lines: if, say, $B=\{b\}$ while $a\neq a'$ belong to $A$, then $a'+b\in C$ and the lines $x=a$, $x+y=a'+b$ meet at $(a,\,a'+b-a)$, forcing $a'+b-a\in B$, i.e.\ $a'=a$; and if two families were singletons, $\mathcal A$ would consist of three pairwise crossing lines, which are concurrent or have an ordinary vertex. Now fix $b_1\neq b_2$ in $B$ and $a_1\in A$. By \eqref{arrsum}, $A+b_i\subset C$ and $C-b_i\subset A$, so $A=C-b_i$ for $i=1,2$; hence $C=C+(b_2-b_1)$, and symmetrically $C=C+(a_2-a_1)$ for all $a_1,a_2\in A$. Thus $C$ is invariant under the group $G$ generated by $(A-A)\cup(B-B)$, and, being nonempty and locally finite, $C$ can only be invariant under a discrete group: $G=\delta\Z$ with $\delta>0$. From $A=C-b_1$ and $B=C-a_1$ we get $A-A=B-B=C-C\subset G$, and together with $C+G=C$ this gives $C=c_0+\delta\Z$ for any $c_0\in C$; hence also $A=a_0+\delta\Z$ and $B=b_0+\delta\Z$, with $a_0+b_0\in C$. The affine map $(x,y)\mapsto((x-a_0)/\delta,\,(y-b_0)/\delta)$ takes $\mathcal A$ onto $\mathcal A_\triangle$.
	\end{proof}

	Note that the proof yields slightly more than stated: in case (iii) the three intercept sets are forced to be arithmetic progressions with a common spacing, so there are no arrangements of type (iii) with irregular spacings. Combining the theorem with Lemma \ref{Lparity} we obtain the combinatorial half of the trichotomy.

	\begin{corollary}\label{Carr}
		Let $S$ be as in Lemma \ref{Lparity}: $\partial S$ is contained in a locally finite union of lines and $2\chi_S-1=\sum g_d(\ell_d)$ as in Proposition \ref{Pchar}, with nonconstant profiles carrying jumps of fixed orientations. Then the arrangement of the boundary lines is a family of parallel lines, a pencil of concurrent lines, or an affine image of the triangular arrangement $\mathcal A_\triangle$. In the first case $S$ is a diagonal set; in the second, $S$ is a fan of Proposition \ref{Pfan}; in the third, after the affine normalization, the boundary lines are $u\in\Z$, $v\in\Z$, $u+v\in\Z$, the arrangement of the carry set. Conjecture \ref{Cdich} is thereby reduced to its arithmetic core: to show that the only two-valued configurations $g_1(u)+g_2(v)+g_3(u+v)$ with bounded profiles whose jumps lie exactly on $\Z$ are, up to constants and complementation, the carry configuration of Theorem \ref{Tcarry}.
	\end{corollary}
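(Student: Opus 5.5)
The argument has two layers: combinatorics of the arrangement, then the shape of $S$ in each of the three cases.

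\emph{Arrangement.} Let $\mathcal A$ be the locally finite family of boundary lines, keeping only lines along which $\chi_S$ actually jumps on a set of positive length. By Lemma \ref{Lparity}, every vertex of $\mathcal A$ lies on an odd number of these lines, so in particular on at least three. Hence $\mathcal A$ has no ordinary vertex, and Theorem \ref{Tarr} applies once $\mathcal A$ has at least two lines. The case of a single line is absorbed into case (i). So $\mathcal A$ is parallel, concurrent, or an affine image of $\mathcal A_\triangle$. Here we need that each line of $\mathcal A$ lies in some wave family $d$, since jumps of $\sum g_d(\ell_d)$ occur only on level lines of the $\ell_d$. We also need that no essential jump line is lost or gained when passing from profiles to the boundary: cancellations between families can only occur along lines common to two directions, which is impossible for non-parallel families.

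\emph{Case (i), parallel lines.} All boundary lines have a single direction, so $\chi_S$ is a.e.\ a function of one functional $\ell$, say $\chi_S=\chi_E(\ell)$. The spectral invariance forces the frequencies onto one line through the origin inside $Q_{2,4}$, so the slope is positive and $S$ is diagonal.

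\emph{Case (ii), concurrent lines.} All lines pass through one point $p$, so $\chi_S$ is locally constant off finitely many lines through $p$ (local finiteness makes the pencil finite). Therefore $S$ is, up to null sets, a union of sectors at $p$, that is, a cone. Two-valuedness with jumps across every line means the sectors alternate. Positivity of slopes comes from the wave decomposition, and Proposition \ref{Pfan}(i) (equivalently Theorem \ref{Tcone}) then identifies $S$ as a fan with $k$ odd.

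\emph{Case (iii), triangular arrangement.} The affine map from Theorem \ref{Tarr} sends the three families to $u\in\Z$, $v\in\Z$, $u+v\in\Z$. Using the resonance of Proposition \ref{Pchar}, after like orientation the third functional is the sum of the first two, which fixes the normalization. The reduced statement is then a restatement: the profiles have jumps exactly at the integers.

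The main obstacle is the passage from ``jumps on a line of family $d$'' to ``a jump of $g_d$ at that level.'' A jump of $\chi_S$ across a segment of a line $\ell_d=c$ must come from a jump of $g_d$ at $c$, because the other terms are continuous transversally there except at isolated vertices. I would prove this by restricting to a short transversal segment avoiding vertices and using the fact that bounded profiles have one-sided essential limits a.e.\ along it. Conversely, a jump of $g_d$ at $c$ produces a jump of $\chi_S$ along the whole line $\ell_d=c$, which explains why the boundary lines are full lines.
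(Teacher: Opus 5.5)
Your proposal is correct and follows the paper's proof: Lemma \ref{Lparity} (equivalently Corollary \ref{Cordinary}) excludes ordinary vertices, Theorem \ref{Tarr} gives the trichotomy, and in the concurrent case the alternating sectors are identified as a fan through the converse part of Proposition \ref{Pfan}(i). The paper builds the correspondence between boundary lines and profile jumps, which you single out as the main obstacle, into the hypotheses, so your verification of it is extra care rather than a needed step. In case (iii) your appeal to the resonance of Proposition \ref{Pchar} is unnecessary, because the affine map of Theorem \ref{Tarr} already produces the three families $u$, $v$, $u+v$.
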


	\begin{proof}
		By Corollary \ref{Cordinary} the boundary arrangement has no ordinary vertices, and it is locally finite, so Theorem \ref{Tarr} applies (the cases of at most one boundary line fall under (i)). In case (i), $S$ is a union of faces of a family of parallel lines of one positive slope, i.e.\ a diagonal set. In case (ii), $S$ is a union of sectors at the common point $p$; each boundary line carries a genuine jump, so $\chi_S$ changes across every ray, and around $p$ the values $0$ and $1$ alternate; by Lemma \ref{Lparity} the number of lines is odd, and $S$ is an alternating union of sectors of concurrent lines of positive slopes, i.e.\ a fan (Proposition \ref{Pfan}(i)). Case (iii) is Theorem \ref{Tarr}, together with the observation that the three families inherit the positive slopes of the wave decomposition.
	\end{proof}

	Projective duality takes the line $ax+by+c=0$ to the point with homogeneous coordinates $[a:b:c]$ in the dual projective plane; concurrent lines dualize to collinear points, and pencils of parallel lines dualize to collinear points as well, carried by lines through the dual point of the line at infinity. Under this duality the boundary arrangement becomes a countable point set $S^*$, and Corollary \ref{Cordinary} becomes the Sylvester--Gallai condition: \emph{every line meeting $S^*$ in at least two points, other than the carriers of $S^*$ described below, contains a third point of $S^*$}. Since all boundary lines have finite positive slopes, we may work in the affine chart of the dual plane which writes the dual of $\{y=\sigma x+c\}$ as $(\sigma,c)$; the chart misses only the duals of vertical lines, of which there are none. In this chart $S^*$ is carried by finitely many vertical lines, one per slope -- the duals of the pencils of parallels; projectively the carriers are concurrent, at the dual point of the line at infinity, which lies outside the chart. For the carry set, $S^*$ consists of the arithmetic progressions $c\in\Z$ on the carriers $\sigma=1$ and $\sigma=\frac 12$, and $c\in\frac 12\Z$ on $\sigma=\frac 34$; one checks directly that the line through $(1,n)$ and $(\frac 12,m)$ meets the third carrier at $(\frac 34,\frac{n+m}2)$ -- the resonance in dual form -- so that every non-carrier connecting line meets $S^*$ in exactly three points. Figure \ref{fig:dual} shows this chart on the left and, on the right, a chart in which the common point of the carriers is finite and the whole configuration is bounded: there, as Proposition \ref{Pdual} shows, $S^*$ becomes \emph{exactly} the configuration of \cite{BDSYZ}.

	\begin{figure}[htb]
		\centering
		\includegraphics[width=.46\textwidth]{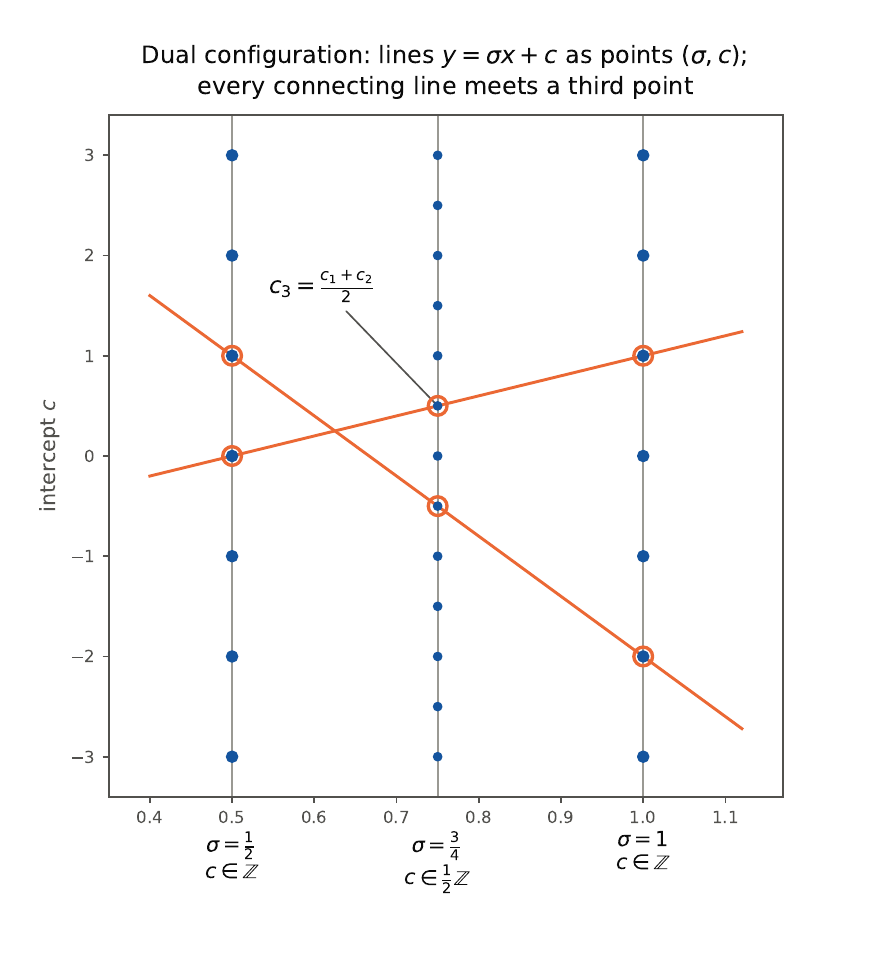}\hfill
		\includegraphics[width=.5\textwidth]{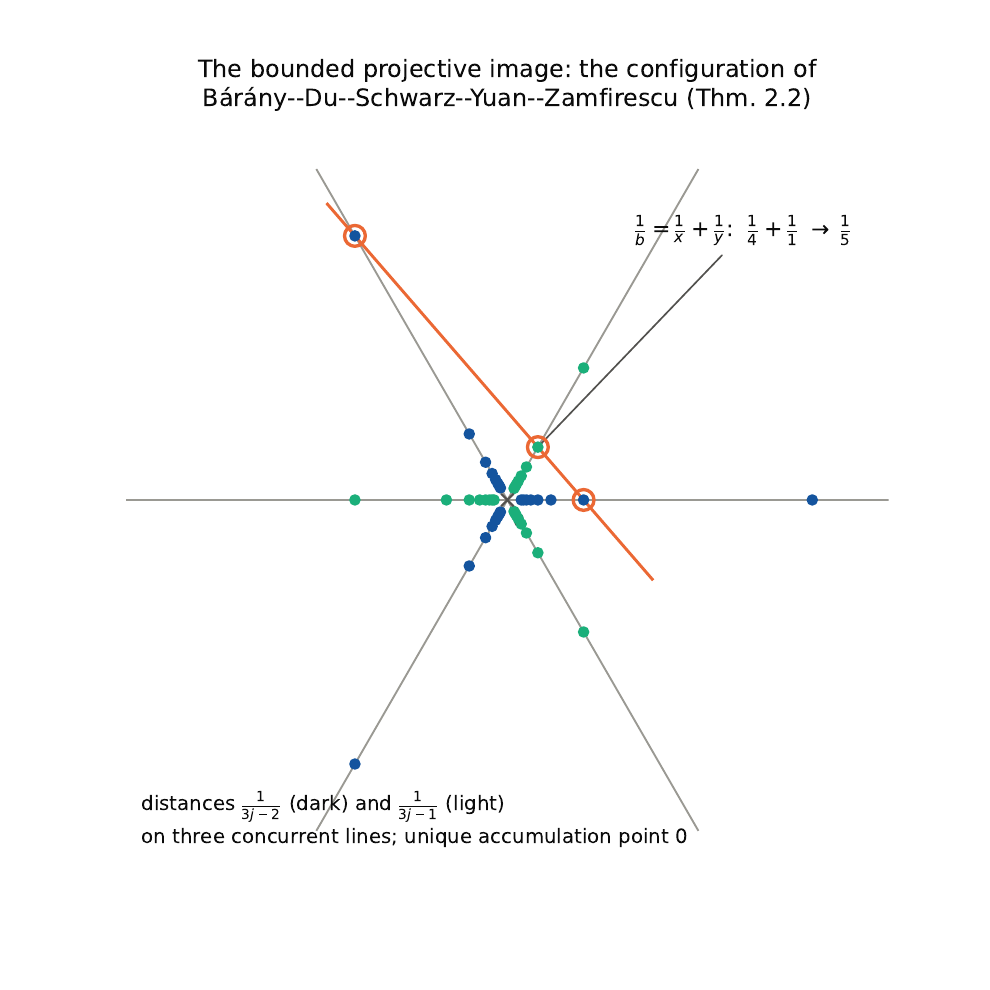}
		\caption{Left: the dual configuration $S^*$ in the affine chart $(\sigma,c)$ -- the boundary lines of the three-family set as points, with two connecting lines and the third-point law $c_3=(c_1+c_2)/2$; the three carriers meet at the dual of the line at infinity, outside the chart. Right: the same projective configuration in a chart where that common point is finite -- by Proposition \ref{Pdual}, exactly the configuration of \cite{BDSYZ}: six half-lines at angular spacing $\pi/3$ carrying the sequences of distances $1/(3j-2)$ and $1/(3j-1)$, with a collinear triple realizing $\frac1b=\frac1x+\frac1y$.}
		\label{fig:dual}
	\end{figure}

	\begin{remark}[collinear dual sets]
		The Sylvester--Gallai condition is empty when $S^*$ is collinear, and the collinear dual sets are exactly the duals of pencils -- the two degenerate invariant types. Points on a line through the dual of the line at infinity (a vertical line in the chart) are parallel boundary lines: strips. Points on any other line are concurrent boundary lines, the line being the dual of the common point: the fans of Proposition \ref{Pfan}, whose duals are the collinear sets of odd cardinality, and, in the limit of Theorem \ref{Tcone}, arbitrary closed subsets of the dual line of the vertex -- in the chart, sets with positive abscissae, reflecting the positive slopes. The carry configuration is the non-collinear case, to which the following proposition refers.
	\end{remark}

	\begin{proposition}\label{Pdual}
		In suitable coordinates, the projective dual of the arrangement
		$$\mathcal A'=\{x=m\}_{\,m\in 1+3\Z}\cup\{y=n\}_{\,n\in 1+3\Z}\cup\{x+y=k\}_{\,k\in 2+3\Z}$$
		-- the image of $\mathcal A_\triangle$ under $(x,y)\mapsto(3x+1,3y+1)$ -- is exactly the configuration of \cite{BDSYZ}, Theorem 2.2: six half-lines from a common point at angular spacing $\pi/3$, carrying the points at distances $1/(3j-2)$, $j\geq1$, from the vertex on three alternate half-lines and the points at distances $1/(3j-1)$ on the other three. Consequently $S^*$ is projectively equivalent to the configuration of \cite{BDSYZ}, which is a bounded realization of it.
	\end{proposition}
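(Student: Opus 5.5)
The plan is to compute the dual explicitly in a chart where the third homogeneous coordinate is normalized, and then straighten the result with a linear map. First, the description of $\mathcal A'$ as the image of $\mathcal A_\triangle$ is immediate: under $(x,y)\mapsto(3x+1,3y+1)$ the line $x=m$ goes to $x=3m+1$, the line $y=n$ to $y=3n+1$, and the line $x+y=k$ to $x+y=3k+2$.

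Next I dualize. The line $ax+by+c=0$ goes to $[a:b:c]$, so
$$x=m\mapsto[1:0:-m],\qquad y=n\mapsto[0:1:-n],\qquad x+y=k\mapsto[1:1:-k].$$
The point of the normalization is that $0\notin 1+3\Z$ and $0\notin 2+3\Z$, so the last coordinate never vanishes. I therefore work in the affine chart obtained by dividing by the third coordinate. There the dual points are $(-1/m,0)$, $(0,-1/n)$ and $(-1/k,-1/k)$. This chart is one of the "suitable coordinates": the carriers, which are the coordinate axes and the diagonal, meet at the finite point $0$, and the configuration is bounded.

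The key arithmetic observation concerns signs. On each axis, $m\in 1+3\Z$ takes the positive values $3j-2$ and the negative values $-(3j-1)$, $j\geq1$. For the diagonal the roles flip: $k\in 2+3\Z$ takes the positive values $3j-1$ and the negative values $-(3j-2)$.

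To make the angles equal to $\pi/3$ I apply the linear map with $e_1\mapsto u_1$ and $e_2\mapsto u_2$, where $u_1,u_2$ are unit vectors at angle $2\pi/3$. Then $e_1+e_2\mapsto u_1+u_2$, which is again a unit vector and bisects them, at angle $\pi/3$. Consequently $te_1$, $te_2$ and $t(e_1+e_2)$ all map to points at distance $|t|$ from the origin, so the distances $1/|m|$, $1/|n|$, $1/|k|$ are preserved.

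I then read off the six half-lines in angular order. Since the dual point is $-1/m$ etc., a negative index lands on the positive side. This gives the half-lines $u_1$, $u_1+u_2$, $u_2$, $-u_1$, $-(u_1+u_2)$, $-u_2$, at angles $0,\pi/3,\dots,5\pi/3$. They carry the distances $1/(3j-1)$, $1/(3j-2)$, $1/(3j-1)$, $1/(3j-2)$, $1/(3j-1)$, $1/(3j-2)$ respectively. This alternation is exactly the configuration described in \cite{BDSYZ}, Theorem 2.2.

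For the final sentence, I note that an affine map of the primal plane induces a projective transformation of the dual plane, and so does a change of chart. Hence $S^*$, the dual of any affine copy of $\mathcal A_\triangle$ (in particular of the carry arrangement), is projectively equivalent to the configuration above.

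The only step I expect to need care is matching conventions with \cite{BDSYZ}, namely which alternate triple of half-lines carries $1/(3j-2)$ and the orientation. Both can be adjusted by a rotation or reflection, or by choosing the sign of the shift $+1$ in the normalization, so this is bookkeeping rather than a real obstacle.
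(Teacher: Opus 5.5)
Your proposal is correct and follows the paper's proof essentially step by step. The shared steps are: the dualization $ax+by+c=0\mapsto[a:b:c]$ read in the chart $(a/c,b/c)$, justified by $0\notin 1+3\Z,\,2+3\Z$; the sign splitting $m=3j-2$ versus $m=-(3j-1)$ on the axes, with the roles flipped for $k\in 2+3\Z$; and a linear map sending $e_1,e_2$ to unit vectors at angle $2\pi/3$, so that $e_1+e_2$ is also a unit vector and distances are preserved. Your reading of which alternate rays carry $1/(3j-2)$ and $1/(3j-1)$ matches the paper's, and so does your closing remark that affine maps of the primal plane induce projective maps of the dual.
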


	\begin{proof}
		Dualize $ax+by+c=0\mapsto[a:b:c]$ and use the chart $[a:b:c]\mapsto(a/c,b/c)$, whose domain excludes only the duals of the lines through the origin; no line of $\mathcal A'$ passes through the origin, since $0$ lies in none of the three cosets. With $e_1,e_2$ the standard basis,
		$$\{x=m\}\mapsto-\tfrac1m\,e_1,\qquad \{y=n\}\mapsto-\tfrac1n\,e_2,\qquad \{x+y=k\}\mapsto-\tfrac1k\,(e_1+e_2),$$
		so the dual set is carried by the three lines spanned by $e_1$, $e_2$ and $e_1+e_2$: the three pencils of parallels pass through three points of the line at infinity, and their duals lie on three lines through the dual of the line at infinity, the origin of the chart. The coset $1+3\Z$ splits into $m=3j-2>0$ and $m=-(3j-1)<0$, $j\geq1$, giving on the first carrier the points at distances $1/(3j-2)$ along the ray of $-e_1$ and $1/(3j-1)$ along the ray of $e_1$; likewise on the second carrier, while the coset $2+3\Z$ splits into $k=3j-1$ and $k=-(3j-2)$, giving distances $1/(3j-1)$ along the ray of $-(e_1+e_2)$ and $1/(3j-2)$ along the ray of $e_1+e_2$. Now change coordinates by the linear map $L\colon e_1\mapsto(1,0)$, $e_2\mapsto(-\frac12,\frac{\sqrt3}2)$. Since $L(e_1+e_2)=(\frac12,\frac{\sqrt3}2)$ is again a unit vector, $L$ preserves all six sequences of distances while placing the six rays at directions $0^\circ,60^\circ,\dots,300^\circ$; the sequence $1/(3j-2)$ lands on the rays at $60^\circ$, $180^\circ$, $300^\circ$ and $1/(3j-1)$ on those at $0^\circ$, $120^\circ$, $240^\circ$. This is the set of \cite{BDSYZ}, Theorem 2.2. Under the identification, its Sylvester--Gallai property is the absence of ordinary vertices in $\mathcal A'$: a connecting line other than a carrier is the dual of a vertex of $\mathcal A'$ and carries one point of the configuration per line of $\mathcal A'$ through that vertex -- exactly three, by Theorem \ref{Tarr}. The bisector identity of \cite{BDSYZ}, Lemma 2.1 -- in an angle of $\frac{2\pi}3$ whose sides carry points at distances $x$ and $y$ from the vertex, the collinear point on the bisector lies at distance $b$ with $\frac1b=\frac1x+\frac1y$ -- is the resonance in dual form: it expresses the collinearity of the duals of $x=m$, $y=n$ and $x+y=m+n$, with $x=\frac1m$, $y=\frac1n$, $b=\frac1{m+n}$ for positive $m,n$ in the cosets. Finally, the boundary arrangement $u,v,u+v\in\Z$ of the carry set (Remark \ref{Rgeom}) is affinely equivalent to $\mathcal A_\triangle$, hence to $\mathcal A'$, and an affine map of the plane induces a projective transformation of the dual plane; therefore $S^*$ is projectively equivalent to the configuration of \cite{BDSYZ}.
	\end{proof}

	Dualizing Theorem \ref{Tarr} now shows that within its natural class the example of \cite{BDSYZ} is the only one.

	\begin{corollary}[dual form of Theorem \ref{Tarr}]\label{Cdual}
		Let $P$ be a countable non-collinear set in the real projective plane in which every line through two points of $P$ contains a third, and suppose that the dual line arrangement $\{p^*:\,p\in P\}$ is locally finite in some affine chart of the dual plane. Then, up to a projective transformation, $P$ is the configuration of Proposition \ref{Pdual}, possibly with the common point of the three carriers adjoined. (The augmented set still satisfies the hypotheses, since every line through the vertex and another point of the configuration is one of the carriers; so both cases occur, and there is no other example in this class.)
	\end{corollary}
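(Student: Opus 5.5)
The plan is to dualize back and apply Theorem \ref{Tarr} to the dual arrangement $\mathcal A=\{p^*:\ p\in P\}$, viewed in the affine chart of the dual plane where it is locally finite.

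Incidence in the dual is clean. Two points $p,q\in P$ span a line $L$, whose dual is a point $L^*$. The lines $p^*$ through $L^*$ are exactly the duals of the points of $P$ on $L$. So ``every line through two points of $P$ contains a third'' says that every point of the projective dual plane lying on at least two lines of $\mathcal A$ lies on at least three.

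In the chosen affine chart two things need care. First, the line at infinity of the chart may itself be some $p_\infty^*$; it is then simply omitted from the affine arrangement. Second, intersection points may lie at infinity, i.e.\ parallel classes. Both only affect points at infinity, so every finite vertex of the affine arrangement $\mathcal A_{\rm aff}$ still lies on at least three of its lines. Hence $\mathcal A_{\rm aff}$ is locally finite with no ordinary vertex, and Theorem \ref{Tarr} applies, provided $\mathcal A_{\rm aff}$ has at least two lines, which follows from non-collinearity.

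Now I run through the three cases of Theorem \ref{Tarr}.

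\begin{itemize}
\item Case (i), all lines parallel: all lines of $\mathcal A$ pass through one point at infinity, together possibly with the line at infinity. Dually, $P$ is collinear, which is excluded.
\item Case (ii), all lines concurrent at a finite point $q$: the affine lines all pass through $q$. If the line at infinity also belonged to $\mathcal A$, we would have $q\notin p_\infty^*$. Then two affine lines through $q$ meet $p_\infty^*$ in distinct points, and each such point lies on exactly two lines of $\mathcal A$: the projective hypothesis fails at infinity. So all of $\mathcal A$ is concurrent, and $P$ is collinear, again excluded.
\item Case (iii): after an affine map $\mathcal A_{\rm aff}$ is $\mathcal A_\triangle$.
\end{itemize}

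In case (iii), the remaining question is whether the line at infinity $\ell_\infty$ may belong to $\mathcal A$. Every point of $\ell_\infty$ meeting two lines of $\mathcal A$ must meet a third. $\ell_\infty$ meets the three parallel classes of $\mathcal A_\triangle$ at three distinct points $d_1,d_2,d_3$. Each $d_i$ lies on infinitely many lines of $\mathcal A_\triangle$, so the condition at $d_i$ holds whether or not $\ell_\infty$ is added. Any other point of $\ell_\infty$ lies on $\ell_\infty$ alone. So both $\mathcal A_\triangle$ and $\mathcal A_\triangle\cup\{\ell_\infty\}$ are admissible, and nothing else.

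Dualizing back:
\begin{itemize}
\item The chart-affine map induces a projective transformation of $P$.
\item $\mathcal A_\triangle$ is projectively (indeed affinely) equivalent to $\mathcal A'$.
\item By Proposition \ref{Pdual}, $\mathcal A'$ dualizes to the configuration of \cite{BDSYZ}.
\item The line at infinity $\ell_\infty$ dualizes to the common point of the three carriers, because the carriers are the duals of the three points $d_i$, which lie on $\ell_\infty$.
\end{itemize}
This gives the two alternatives of the statement. The parenthetical claim is checked directly: a line through the vertex and another configuration point is a carrier, which already contains infinitely many points.

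The main obstacle is the bookkeeping at infinity. Theorem \ref{Tarr} is affine, while the hypothesis is projective. I would handle this by carefully separating the one possible line at infinity and the points at infinity in each case, as above, and by checking that local finiteness in the chart is preserved after discarding $\ell_\infty$.
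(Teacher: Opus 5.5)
Your proposal is correct and follows the paper's proof essentially step by step. You discard the possible dual $z$ of the chart's line at infinity, note that the third line guaranteed at a finite vertex cannot be the line at infinity, and apply Theorem \ref{Tarr}; you then rule out cases (i) and (ii) either by collinearity or by an ordinary point on the line at infinity (the dual form of the paper's ``line through $z$ and a single point of $P\cap\ell$''), and in case (iii) you identify the adjoined point with the common point of the three carriers. The only cosmetic difference is that you dispose of case (i) by noting that the line at infinity passes through the common point at infinity, whereas the paper treats (i) and (ii) together.
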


	\begin{proof}
		At most one point of $P$ -- the dual of the chart's line at infinity, call it $z$ if it occurs -- fails to contribute an affine line in the chart; the duals of the remaining points form a locally finite arrangement $\mathcal A$ with at least two lines. $\mathcal A$ has no ordinary vertex: a vertex lying on exactly two of its lines would dualize to a line through exactly two points of $P$, and the guaranteed third point cannot be $z$, whose dual line, the line at infinity, misses the finite vertices of the chart. Theorem \ref{Tarr} applies. In its cases (i) and (ii) the lines of $\mathcal A$ are concurrent in the projective plane -- at a point at infinity of the chart or at a finite point -- so $P\setminus\{z\}$ lies on the dual line $\ell$ of the common point. Then either $P\subset\ell$, contradicting non-collinearity, or $z\notin\ell$ occurs in $P$, and the line through $z$ and any single point of $P\cap\ell$ contains no third point of $P$, contradicting the hypothesis. In case (iii), $\mathcal A$ is an affine image of $\mathcal A_\triangle$, so $P\setminus\{z\}$ is projectively equivalent to the dual of $\mathcal A_\triangle$, i.e.\ to the configuration of Proposition \ref{Pdual}; and if $z$ occurs, it is the dual of the line at infinity -- precisely the common point of the three carriers.
	\end{proof}

	\begin{remark}
		The Sylvester--Gallai theorem states that a \emph{finite} non-collinear point set always spans an ordinary line; dually, a finite arrangement of lines which are not all concurrent (in the projective sense, so in particular not all parallel) always has an ordinary point. Combined with Corollary \ref{Cordinary} this gives a purely combinatorial reason why a boundary arrangement with finitely many lines, neither all parallel nor all concurrent, is impossible; the concurrent case is exactly the fan, which Theorem \ref{Tlines} excludes through its hypothesis of finite cross-sections. Theorem \ref{Tarr} is the corresponding statement for infinite locally finite arrangements. Theorem 2.2 of \cite{BDSYZ} shows that Sylvester--Gallai fails for countable \emph{bounded} sets; by Proposition \ref{Pdual} their example is exactly the projective dual of an affine copy of the triangular lattice -- the configuration behind our positive-density example -- and by Corollary \ref{Cdual} it is, up to projective transformations and the adjoined vertex, the only counterexample whose dual arrangement is locally finite in some chart. The local finiteness cannot be dropped: $\mathbb Q^2$ is a countable set with no ordinary line, and its dual is a dense family of lines -- precisely the non-locally-finite case left open in Theorem \ref{Tlines}.
	\end{remark}

	\begin{remark}[the group law and the bound $D\leq 3$]
		Three concurrent carrier lines -- parallel in the chart of Figure \ref{fig:dual}, left -- form a degenerate cubic curve, and collinearity of one point on each carrier is an affine relation among the three intercepts -- the group law of the cubic, which is our resonance $\sum\mu_d\ell_d=0$. The elementary sumset argument at the end of the proof of Theorem \ref{Tarr} then forces the intercept sets to be arithmetic progressions with a common spacing. Since a cubic contains at most three lines, non-collinear configurations of Sylvester--Gallai type carried by parallel lines, each carrying infinitely many points, cannot involve more than three of them: this is the combinatorial counterpart of Proposition \ref{Pfour} and of the computational bound $D\leq 3$ of Remark \ref{Rcomp}. The collinear configurations -- the fans -- are exempt, and indeed use any odd number of carriers, one point on each.
	\end{remark}

	We record several questions suggested by this dictionary; a further reformulation, in terms of waves, is the subject of the final section.

	\begin{question}[quantitative stability]
		Sets defining few ordinary lines are classified by Green and Tao \cite{GT}. Is there an analytic counterpart: if $\|\H\chi_S-\chi_S\|_{L^2}\leq\delta\|\chi_S\|_{L^2}$ and $\partial S$ lies on countably many lines, is $S$ close in measure to a union of parallel strips, a fan, or a carry set? Theorem \ref{T1} is the model computation: truncating the strip creates ordinary points on the boundary, at the analytic cost $\sqrt{\varepsilon|\log\varepsilon|}$.
	\end{question}

	\begin{question}[Beatty--Fraenkel]
		Two-valued sums of floor functions are the subject of the theory of exact covers by Beatty sequences (Graham \cite{Gr}; Fraenkel's conjecture). Can the rigidity techniques of that theory prove Conjecture \ref{Lsturm} and settle the higher-rank cases of Proposition \ref{Pfour}?
	\end{question}

	\begin{question}[dyadic carries]
		The carry function is the $2$-cocycle of the extension $0\to\Z\to\R\to\R/\Z\to 0$; see \cite{DF} for its appearances in combinatorics. Base-$2$ carries are the cocycle of addition of dyadic integers, and the sibling identity of Section 3 is a carry identity in base $2$. Do the $\S$-invariant sets, modulo constants, consist exactly of dyadic strip patterns and a binary-carry configuration?
	\end{question}

	\begin{question}[elliptic carries]\label{Qell}
		The three-family example lives on a degenerate cubic. Is there an invariant set at positive density whose boundary lines envelope the dual curve of a smooth real cubic, with intercepts in a coset of the group and collinearity governed by the elliptic group law $P\oplus Q\oplus R=O$? None of the obstructions of Section \ref{sec:class}, which concern finitely many directions, apply to a continuum of directions.
	\end{question}

	\section{Two-valued superpositions of traveling waves}\label{sec:waves}

	Suppose, as in Section \ref{sec:class}, that the spectrum of $2\chi_S-1$ is contained in finitely many lines $L_1,\dots,L_n$ through the origin, and that the restriction to each line inverts to a bounded profile. Interpret the second coordinate as time, $t=y$. The wave along $L_d$ is then a function of $x-a_dt$, where the distinct speeds $a_d$ are the reciprocals of the slopes of the level lines, and
	\begin{equation}\label{wavesum}
		2\chi_S(x,t)-1=F(x,t)=\sum_{d=1}^{n}f_d(x-a_dt),
	\end{equation}
	a superposition of traveling waves; equivalently, $F$ is a bounded solution of the factored transport equation $\prod_{d=1}^n(\partial_t+a_d\partial_x)F=0$. The classification problem thus becomes a question about waves.

	\begin{question}\label{Qwaves}
		Let $f_1,\dots,f_n$ be nonconstant functions on the line and let $a_1,\dots,a_n$ be distinct constants. Can $F(x,t)=\sum_df_d(x-a_dt)$ take, for every $t$, only two values?
	\end{question}

	\begin{remark}[Galilean invariance]
		A boost $x\mapsto x-ct$ shifts every speed by $-c$ and preserves two-valuedness, so Question \ref{Qwaves} is invariant under changes of inertial frame, and the signs of the speeds are immaterial. The invariance $\H(2\chi_S-1)=2\chi_S-1$ corresponds to choosing a frame in which all speeds are positive -- a unidirectional field, with spectrum in $\overline{Q_{2,4}}$ -- and the annihilation of constants by $\H$ says that the equation does not see the time average of the field.
	\end{remark}

	\begin{theorem}\label{Twaves}
		Let $F=\sum_{d=1}^nf_d(x-a_dt)$ be a superposition of traveling waves with distinct speeds, as in \eqref{wavesum}, normalized so that $F$ is $\{0,1\}$-valued almost everywhere (the field $2\chi_S-1$ of \eqref{wavesum} is $\{\pm1\}$-valued; the two normalizations differ by an affine change $F\mapsto\frac12(F+1)$, which is absorbed into the profiles), with all profiles nonconstant.

		(i) $n=2$ is impossible; no boundedness of the profiles is required.

		(ii) $n=3$ occurs for every triple of distinct speeds $a_1<a_2<a_3$, in two forms. \emph{Sawtooth form}: for any $\lambda_1,\lambda_3>0$ with
		$a_2=(\lambda_1a_1+\lambda_3a_3)/(\lambda_1+\lambda_3)$ and any phases $\rho_1,\rho_3$, the field
		\begin{align*}
			F(x,t)=\{\lambda_1(x-a_1t)+\rho_1\}&+\{\lambda_3(x-a_3t)+\rho_3\}\\
			&-\{\lambda_1(x-a_1t)+\lambda_3(x-a_3t)+\rho_1+\rho_3\}
		\end{align*}
		is $\{0,1\}$-valued. \emph{Step form}: $F(x,t)=H(x-a_1t)+H(x-a_3t)-H(x-a_2t)$, together with its space-time translates $F(x-x_0,t-t_0)$ and its complement $1-F$, is $\{0,1\}$-valued -- this is the three-line fan of Proposition \ref{Pfan} in space-time; the three fronts must pass through a common event, since $H(u-a)+H(v-b)-H(u+v-c)$ is two-valued only for $c=a+b$. In both forms the inverted wave travels at the intermediate speed. Conversely, by Proposition \ref{Pchar} every solution with $n=3$ either has non-integer profiles, in which case the wavenumbers and frequencies satisfy the exact resonance $k_1+k_2=k_3$, $\omega_1+\omega_2=\omega_3$ and, granting Conjecture \ref{Lsturm}, the solution is of sawtooth form; or it has integer-valued profiles, where the step form occurs and we conjecture that it is the only one.

		(iii) Every odd $n$ occurs with step profiles: for speeds $a_1<\dots<a_n$ the alternating field of $n$ concurrent fronts
		$$F(x,t)=\frac12+\frac12\sum_{d=1}^n(-1)^{d-1}\tmop{sign}(x-a_dt)$$
		is $\{0,1\}$-valued (the fan with $k=n$ lines), and no even $n$ occurs with finitely many fronts. For profiles which are not integer-valued, $n=4$ is impossible when the four functionals satisfy a single resonance in general position, and we conjecture that $n\in\{1,3\}$ always; more precisely, that every solution with infinitely many fronts is of sawtooth form with $n=3$, and every solution with finitely many fronts is an alternating concurrent step field.
	\end{theorem}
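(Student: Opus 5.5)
The plan is to treat the three parts separately, translating each into statements already proved in Section \ref{sec:class} via the change of variables $u=x-a_dt$.

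\emph{Part (i).} With $n=2$ and $a_1\neq a_2$, the map $(x,t)\mapsto(x-a_1t,\,x-a_2t)$ is a linear bijection of $\R^2$, so $u=x-a_1t$ and $v=x-a_2t$ are independent variables and Lebesgue null sets correspond. The field $f_1(u)+f_2(v)$ is then $\{0,1\}$-valued a.e.\ in $(u,v)$. Lemma \ref{Ltwo} applies directly: it uses no boundedness, only the fact that each profile takes two distinct values on sets of positive measure. It forces a constant profile, contradicting the hypothesis.

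\emph{Part (ii), sufficiency.} For the sawtooth form, I would put $U=\lambda_1(x-a_1t)+\rho_1$ and $V=\lambda_3(x-a_3t)+\rho_3$. The carry identity of Theorem \ref{Tcarry}(i), namely $\{U\}+\{V\}-\{U+V\}\in\{0,1\}$, holds pointwise. The third argument $U+V$ is proportional to $x-a_2t$ with $a_2=(\lambda_1a_1+\lambda_3a_3)/(\lambda_1+\lambda_3)$, which is the intermediate speed. Every triple $a_1<a_2<a_3$ is reached by choosing $\lambda_1/\lambda_3=(a_3-a_2)/(a_2-a_1)$.

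For the step form, I would check directly, as in Proposition \ref{Pfan}(iii), that $H(u)+H(v)-H(w)$ with $w=x-a_2t$ is two-valued. Concurrency makes $w$ a positive combination of $u$ and $v$, so $u,v\ge0$ implies $w\ge0$, and $u,v<0$ implies $w<0$. The necessity claim $c=a+b$ in $H(u-a)+H(v-b)-H(u+v-c)$ comes from a point where $u>a$ and $v>b$ but $u+v<c$ (this exists if $c>a+b$), where the sum equals $2$. Symmetrically, if $c<a+b$, a point with $u<a$, $v<b$, $u+v>c$ gives the value $-1$. Translates and complements are immediate.

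\emph{Part (ii), converse.} Here I would invoke Proposition \ref{Pchar} after rescaling the three functionals so that the resonant one is $u+v$. This is the step I expect to be the main obstacle. One has to justify that any three pairwise non-proportional linear functionals $\ell_1,\ell_2,\ell_3$ on $\R^2$ can be rescaled to $u,v,u+v$. This holds because they are linearly dependent with all coefficients nonzero, and each profile absorbs its rescaling. Proposition \ref{Pchar} then gives two cases. If $\mu\neq0$, the characters yield the resonance: the phases $\mu\ell_d$ sum to zero, i.e.\ $k_1+k_2=k_3$ and $\omega_1+\omega_2=\omega_3$. Conjecture \ref{Lsturm} then gives the sawtooth form. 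If $\mu=0$, the profiles are integer-valued. The statement only claims a conjecture there.

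\emph{Part (iii).} The alternating field is exactly Proposition \ref{Pfan}(i) with $\ell_d=x-a_dt$. These are oriented alike, with distinct positive-slope lines in suitable coordinates, after a Galilean boost, which preserves two-valuedness. For the non-existence of even $n$ with finitely many fronts, I would take the finite front arrangement. By Lemma \ref{Lparity} and Corollary \ref{Cordinary} it has no ordinary points. By the Sylvester--Gallai theorem, a finite arrangement without ordinary points is a pencil or a parallel family; parallel fronts mean $n=1$. In a pencil, Lemma \ref{Lparity} forces the number of lines through the vertex to be odd. Finally, the four-wave claim is Proposition \ref{Pfour}, and the remaining assertions are stated as conjectures, so nothing more needs to be proved.
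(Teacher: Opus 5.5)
Your proposal is correct and follows the paper's proof essentially step for step. You use Lemma \ref{Ltwo} for (i), and the carry identity together with the direct sign check of Proposition \ref{Pfan}(iii) for (ii). For the converse you use Proposition \ref{Pchar} with Conjecture \ref{Lsturm}. For (iii) you use Proposition \ref{Pfan}(i), Lemma \ref{Lparity} together with Sylvester--Gallai, and Proposition \ref{Pfour}. Your explicit rescaling to $u,v,u+v$ and your verification that $c=a+b$ fill in details the paper leaves implicit.

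One caveat, which the paper's proof shares, concerns the Sylvester--Gallai step in (iii). The dual Sylvester--Gallai theorem only guarantees an ordinary point in the projective plane, and that point may lie at infinity, i.e.\ a direction carried by exactly two fronts. For the affine claim it is therefore cleaner to cite Theorem \ref{Tarr}: its proof covers finite arrangements, and its case (iii) is necessarily infinite.
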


	\begin{proof}
		(i) is Lemma \ref{Ltwo}: the essential-range argument uses no boundedness. (ii): the sawtooth field is two-valued by the carry identity, since the third argument is the sum of the first two, and the speed of the sum wave is the weighted mean, which fills $(a_1,a_3)$ as the weights vary. For the step field, at a time $t>0$ the three fronts sit at $a_1t<a_2t<a_3t$ and the values on the four complementary intervals are $0,1,0,1$; for $t<0$ the order of the fronts is reversed and the values are again $0,1,0,1$. This is Proposition \ref{Pfan}(iii) with $\ell_d=x-a_dt$; if $a_2$ were not the intermediate speed the value $2$ would appear. The converse statements are Proposition \ref{Pchar} together with Conjecture \ref{Lsturm}; in the resonant case $k_d=\lambda_d$ and $\omega_d=\lambda_da_d$, and positivity of the weights places the resonant speed strictly between the other two, in agreement with Lemma \ref{Lparity}. (iii) The alternating field is Proposition \ref{Pfan}(i) with $\ell_d=x-a_dt$: at any time $t\neq0$ the $n$ fronts are distinct and the field alternates $0,1,0,\dots,1$ across them. With finitely many fronts the arrangement is finite, and by Lemma \ref{Lparity} and the Sylvester--Gallai theorem it is either parallel ($n=1$) or concurrent with an odd number of lines. The impossibility statement for $n=4$ is Proposition \ref{Pfour}.
	\end{proof}

	\begin{remark}[collision rules]
		In the language of fronts, the jumps of the profiles are fronts moving at the $n$ speeds, and Lemma \ref{Lparity} becomes a collision rule: fronts of a two-valued field may only collide an odd number at a time, with orientations alternating in the order of the speeds; in a triple collision the middle-speed front is the inverted one. The alternating concurrent step fields are the elementary events -- an odd number of fronts meeting in a single collision -- and the sawtooth solution is a periodic train of triple collisions; the blow-up of Proposition \ref{Pfan}(iii) isolates one of them. The conditions $k_1+k_2=k_3$, $\omega_1+\omega_2=\omega_3$ are precisely the kinematic conditions of three-wave interaction familiar from nonlinear optics and plasma physics; here they are enforced arithmetically rather than dynamically. By Theorem \ref{Tarr}, the front arrangement of a two-valued field with locally finite fronts is a family of parallel fronts, a single collision event, or an affine image of the triangular space-time arrangement of the sawtooth solution.
	\end{remark}

	\begin{remark}[two physical incarnations]
		(a) The linearization of the one-dimensional Euler system about a constant state with velocity $u_0$ has characteristic speeds $u_0-c$, $u_0$, $u_0+c$: the entropy (contact) speed is the equal-weight mean of the acoustic speeds, which is the resonance of Theorem \ref{Twaves}(ii) with $\lambda_1=\lambda_3$. Consequently there are solutions of the linearized Euler equations whose density perturbation is, at every instant, the indicator of a set: two counter-propagating sawtooth acoustic trains interlocking with an entropy wave through the carry identity. The step solution is the linearized Riemann problem with an initial unit density step whose data are chosen so that the two acoustic fronts carry density jumps $+1$ and the entropy front carries $-1$. The pressure perturbation, which involves only the two acoustic modes, can never be two-valued, by (i).

		(b) For the vibrating string, (i) says that no genuine d'Alembert solution $f_1(x-ct)+f_2(x+ct)$ is two-valued at all times; the standing mode restores the possibility, since the speeds $-c,0,c$ are resonant with equal weights.
	\end{remark}

	\begin{remark}[self-similar fields]
		Theorem \ref{Tcone} describes all unidirectional two-valued fields which are homogeneous of degree $0$ in space-time, that is, of the form $F(x,t)=\Omega(\arg(x,t))$. In the frame where all speeds are positive: for $t>0$ the field equals a constant $\varepsilon$ on the half-line $x<0$ and an arbitrary $\{\pm1\}$-valued pattern $\omega(x/t)$ on $x>0$, with $\omega=\varepsilon$ near $0^+$ and $\omega=-\varepsilon$ near $+\infty$; for $t<0$ it is the point reflection of this picture. Such a field is a superposition of step waves over a continuum of speeds -- the linearized Riemann problem for a system with a continuum of characteristic speeds, such as free transport -- and it shows that Question \ref{Qwaves} loses all rigidity once the speeds are allowed to fill an interval: the two-valued self-similar fields are parametrized by arbitrary measurable subsets of the speed interval.
	\end{remark}

	\begin{proposition}[no singular profiles]\label{Pdist}
		Suppose that in \eqref{wavesum} the profiles are locally finite signed measures, while $F_t=F(\cdot,t)$ takes only two values for a.e.\ $t$. Then each $f_d$ is absolutely continuous, i.e. a locally integrable function.
	\end{proposition}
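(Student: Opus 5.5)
The plan is to interpret \eqref{wavesum} as an identity of measures on $\R^2$ and to separate each profile's singular part. Decompose every profile as $f_d=f_d^{ac}+\nu_d$, with $\nu_d$ singular with respect to Lebesgue measure on $\R$. Pulled back along the submersion $(x,t)\mapsto x-a_dt$, the term $f_d(x-a_dt)$ is the measure $f_d^{ac}(x-a_dt)\,dx\,dt+\nu_d\otimes dt$ in the coordinates $(s,t)=(x-a_dt,t)$. The first piece is absolutely continuous on $\R^2$. The second is singular on $\R^2$: $\nu_d$ is carried by a Lebesgue-null set $N_d\subset\R$, so $\nu_d\otimes dt$ is carried by the null set $\{x-a_dt\in N_d\}$. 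The hypothesis says that $F$ is a bounded function, hence an absolutely continuous measure. The singular part of $F$ is therefore
$$\sum_{d=1}^n\nu_d(x-a_dt)=0 \qquad\text{as a measure on }\R^2 .$$
The statement then reduces to showing that each $\nu_d$ vanishes.

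For that, apply the Fourier transform in the sense of distributions, first localizing with a cutoff in $t$ or working with tempered test functions. The pulled-back measure $\nu_d(x-a_dt)$ has a Fourier transform carried by the line $L_d=\{(\xi,\tau):\tau=-a_d\xi\}$, and on that line it equals $\hat\nu_d(\xi)$ up to normalization. The lines $L_d$ are distinct because the speeds are distinct, so they meet only at the origin.

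A more elementary route avoids the Fourier transform and its growth conditions. Test the identity against $\psi(x,t)=\alpha(x-a_1t)\beta(t)$ with $\beta\in C_c^\infty$, and let $\beta$ spread out: take $\beta_R(t)=\beta(t/R)/R$. For $d\neq1$ the term $\int\alpha(x-a_1t)\beta_R(t)\,d\nu_d(x-a_dt)$ becomes, in the variables $s=x-a_dt$,
$$\int\!\!\int \alpha\big(s+(a_d-a_1)t\big)\beta_R(t)\,dt\,d\nu_d(s).$$
The inner integral is $O(1/R)\cdot\|\alpha\|_{L^1}/|a_d-a_1|$, and it is supported where $|s|\lesssim R$. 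The $d=1$ term equals $\nu_1(\alpha)\int\beta$. Hence $\nu_1(\alpha)$ is controlled by $R^{-1}|\nu_d|([-CR,CR])$.

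The main obstacle is exactly this estimate. It requires $|\nu_d|([-R,R])=o(R)$, which fails for a general locally finite measure. I would handle it by localizing in space rather than in time. Choose $\beta$ supported near a fixed $t_0$ of width $\delta$. The $d\neq1$ terms are then measures in $(x,t)$ which, restricted to a small window, are carried by lines transversal to the $d=1$ direction. For a rigorous transversality argument, take the disintegration along lines $x-a_1t=s$. For fixed $s$, restrict the identity to the line $x=s+a_1t$. The term $\nu_d(x-a_dt)$ restricted there becomes $\nu_d(s+(a_1-a_d)t)$, and for a.e.\ $s$ this trace is zero. The reason is that a singular measure's translates are singular and the line pencil integrates them against $ds$, giving a product-type measure whose slice is $0$ for a.e.\ $s$ by Fubini. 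Meanwhile $\nu_1\otimes dt$ restricted to the lines is $d\nu_1(s)\otimes dt$. Comparing the disintegrations of the singular identity over $s$, I would conclude that $\nu_1\otimes dt$ equals minus the sum of the measures $\nu_d(x-a_dt)$ for $d\neq1$. The right-hand side has marginal in $s$ (over a compact $t$-window) absolutely continuous, since each term is a convolution-type image of $\nu_d$ by a nondegenerate linear map in the $t$ variable, which smooths in $s$. The left-hand side has marginal $|I|\,\nu_1$, which is singular. So $\nu_1=0$, and by symmetry every $\nu_d=0$.

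I expect this marginal-smoothing step, that the projection of $\nu_d(x-a_dt)\mathbf 1_{t\in I}$ onto $s=x-a_1t$ is $\nu_d*(\text{uniform density on } (a_d-a_1)I)$ and hence absolutely continuous, to be the crux. It is a direct computation and uses only that $\nu_d$ is locally finite.
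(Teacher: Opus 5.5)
Your proof is correct. Its load-bearing steps are the reduction to $\sum_d\nu_d(x-a_dt)=0$ and the final marginal computation, and it departs from the paper's argument after the reduction.

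\textbf{The paper's route.} The paper stays in the plane. From the slice-wise cancellation $\sum_d s_d(\cdot-a_dt)=0$ it forms the planar measures $M_d$ and shows they are pairwise mutually singular. In the coordinates $(u,v)=(x-a_dt,\,x-a_et)$ one has $M_d=c\,s_d\otimes m$, while $M_e$ is carried by $\R\times N_e$. A vanishing finite sum of mutually singular measures has all terms zero.

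\textbf{Your route.} You fix one speed, restrict to a time window $I$, and push the identity forward under $(x,t)\mapsto x-a_1t$. This map is proper on $\R\times I$, so all images are locally finite. The $d=1$ term becomes $|I|\,\nu_1$, which is singular. Each term with $d\neq1$ becomes $\nu_d*\rho_d$ with $\rho_d=|a_d-a_1|^{-1}\chi_{(a_d-a_1)I}$, which is a locally bounded function. Hence $\nu_1=0$.

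\textbf{Comparison.} Both arguments rest on the same fact: in coordinates adapted to two distinct speeds, the lifted profile is $\nu_d$ times Lebesgue measure. The paper turns this into mutual singularity of all the $M_d$ at once. You instead reduce to the one-dimensional dichotomy between singular and absolutely continuous measures, one speed at a time. One advantage of your version is that the singular parts need not be split off first. The same pushforward applied to $F$ itself gives $|I|\,f_1=\int_IF(\cdot+a_1t,t)\,dt-\sum_{d\neq1}f_d*\rho_d$. The right-hand side is absolutely continuous as soon as $F$ is locally integrable.

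\textbf{What to delete.} The Fourier and spreading-in-time routes are abandoned anyway. The disintegration paragraph is wrong as written. Disintegrating $\nu_d(x-a_dt)$ along the lines $x-a_1t=s$ with respect to $ds$ gives, for a.e.\ $s$, a constant multiple of the measure $\nu_d(s+(a_1-a_d)t)$ you wrote down. That measure is not zero when $\nu_d\neq0$. What is true is only that the $s$-marginal is absolutely continuous, hence singular to $\nu_1$, and that is exactly your final computation. The sentence concluding that $\nu_1\otimes dt$ equals minus the other terms merely restates the identity you started from. Present the pushforward step alone as the proof.
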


	\begin{proof}
		Let $s_d$ be the singular part of $f_d$, carried by a Borel set $N_d$ of Lebesgue measure zero. Since $F_t$ is a function, the singular parts must cancel: $\sum_ds_d(\cdot-a_dt)=0$ for a.e. $t$. Define signed measures on $\R^2$ by $M_d(A)=\iint\chi_A(x+a_dt,t)\,ds_d(x)\,dt$; testing against continuous functions and using Fubini gives $\sum_dM_d=0$. The measure $M_d$ is carried by $E_d=\{(x,t):x-a_dt\in N_d\}$. For $e\neq d$ pass to the coordinates $(u,v)=(x-a_dt,x-a_et)$, a nondegenerate linear change of variables since the speeds are distinct; in these coordinates $M_d$ is a constant multiple of $s_d\otimes m$, where $m$ is Lebesgue measure, while $E_e$ becomes $\R\times N_e$. Hence $|M_d|(E_e)=c\,(|s_d|\otimes m)(\R\times N_e)=0$: the measures $M_d$ are pairwise mutually singular. A vanishing finite sum of pairwise mutually singular measures has all its terms zero, so $s_d=0$ for every $d$.
	\end{proof}

	\begin{remark}
		Boundedness of the profiles can also be relaxed: for $n=2$ nothing is needed, and for $n=3$ the character step of Proposition \ref{Pchar} is unchanged, while the extraction of the linear part of an unbounded profile uses the Hyers--Ulam stability of Cauchy's functional equation (a measurable function with bounded Cauchy defect is the sum of a linear and a bounded function). For fully distributional profiles one may first apply $\prod_{d\neq e}(\partial_t+a_d\partial_x)$ to $F$, which isolates a derivative of the single profile $f_e$ and shows that each profile is, after finitely many integrations, a bounded function; together with Proposition \ref{Pdist} this reduces the general case to the one treated above. We omit the details.
	\end{remark}

	\begin{remark}[infinitely many speeds]
		For countably many speeds the finite mechanisms break down: the splitting of $e^{2\pi iF}$ into characters is a finite-product argument, and the torus-closure step of Proposition \ref{Pfour} requires finite rank. With step profiles, concurrent fans with countably many fronts (speeds accumulating at a limit speed, which then carries a renormalized step) are two-valued superpositions of infinitely many waves, and Theorem \ref{Tcone} goes further still. With sawtooth-type profiles we know of no two-valued superposition of infinitely many nonconstant waves that is not a disguised $n\leq 3$ example; the natural candidates (nested carries, products of carries) produce spectrum outside any prescribed finite family of lines, and Remark \ref{Rcomp} warns that cyclic models are unreliable here, since higher harmonics of a single direction alias across the cone.
	\end{remark}

	\begin{question}\label{Qcount}
		Is $n\in\{1,3\}$ also optimal among superpositions of countably many traveling waves with periodic, or almost periodic, profiles -- i.e.\ for atomic spectra supported on countably many lines through the origin?
	\end{question}

	\begin{question}\label{Qdisp}
		If the spectrum is supported on a curve through the origin, the field becomes a dispersive integral $F(x,t)=\int e^{i(kx-W(k)t)}\,d\sigma(k)$, with $\sigma$ a measure on the curve and $\omega=W(k)$ its dispersion relation. Can a genuinely dispersive wave field be two-valued at all times? For piecewise-linear dispersion relations (finitely many phase speeds) the answer is given by Theorem \ref{Twaves}, and for spectra filling a cone by Theorem \ref{Tcone}; the case of a curved dispersion relation is the wave form of Question \ref{Qell}.
	\end{question}

	The wave formulation places the classification next to several classical subjects. \emph{Tomography}: by the Fourier slice theorem, spectrum on $L_1,\dots,L_n$ means that $\chi_S$ retains no information beyond its $n$ X-ray transforms, and the profiles $f_d$ are that projection data. Fishburn, Lagarias, Reeds and Shepp characterized the sets uniquely determined by finitely many projections by additivity: $S=\{\sum_dg_d(\ell_d)>0\}$, see \cite{FLRS1, FLRS2}. Our condition is the exact equality $\chi_S=\sum_dg_d(\ell_d)$, and the ``switching components'' (ghosts) of that theory are precisely the alternating configurations excluded by Lemma \ref{Lparity} and by the transition analysis in the proof of Theorem \ref{Tlines}. \emph{Ridge functions}: sums $\sum_dg_d(\ell_d)$ are sums of ridge functions in the sense of approximation theory \cite{Pi}; the representation problem for indicators with exact values $\{0,1\}$ does not seem to have been considered there. \emph{Periodic decompositions}: decompositions of integer-valued functions into integer-valued periodic summands with prescribed periods were studied by K\'arolyi, Keleti, K\'os and Ruzsa \cite{KKKR}, see also the survey \cite{FR}; Question \ref{Qwaves} is a directional version, with prescribed speeds in place of prescribed periods, and Conjecture \ref{Lsturm} belongs to the same circle of ideas as the theory of exact covers by Beatty sequences \cite{Gr}. \emph{Crystalline measures}: for fixed $t$ the derivative $\partial_xF_t$ is a signed atomic measure whose support translates at $n$ speeds while its spectrum remains on fixed lines -- a moving relative of the crystalline measures and Fourier quasicrystals of \cite{LO, KS, AKKV}; the integer-valued counting functions arising there from Lee--Yang polynomials are themselves integer-valued sums of waves, and it is plausible that this machinery can prove Conjecture \ref{Lsturm}. We have not found Question \ref{Qwaves}, as stated, in the literature.

	\bigskip
	\footnotesize
	\noindent\textsc{Authors' addresses}

	\medskip
	\noindent Evgeny Abakumov\\
	Univ Gustave Eiffel, Univ Paris Est Cr\'eteil, CNRS, LAMA UMR 8050,\\
	F-77447 Marne-la-Vall\'ee, France\\
	\textit{E-mail:} \texttt{evgueni.abakoumov@univ-eiffel.fr}

	\medskip
	\noindent Komla Domelevo\\
	Department of Mathematics, University of W\"urzburg, W\"urzburg, Germany

	\medskip
	\noindent Stefanie Petermichl\\
	Department of Mathematics, University of W\"urzburg, W\"urzburg, Germany

	\medskip
	\noindent A. Poltoratski\\
	Department of Mathematics, University of Wisconsin,\\
	Van Vleck Hall, 480 Lincoln Drive, Madison, WI 53706, USA\\
	\textit{E-mail:} \texttt{poltoratski@wisc.edu}

\end{document}